\newcommand{\mc}[1]{\mathcal{#1}}
\newcommand{\eps}{\varepsilon}
\DeclareMathOperator{\re}{Re}
\DeclareMathOperator{\Span}{span}
\DeclareMathOperator{\diag}{diag}
\newcommand*{\C}{{\mathbb{C}}}     
\newcommand*{\R}{{\mathbb{R}}}     
\newcommand*{\Z}{{\mathbb{Z}}}     
\newcommand*{\N}{{\mathbb{N}}}
\newcommand*{\Lin}{{\mathcal{L}}}   
\newcommand*{\Dom}{{\mathcal{D}}}   
\newcommand{\ran}{{\mathcal{R}}}   
\renewcommand{\ker}{{\mathcal{N}}}
\newcommand*{\abs} [1]{\lvert#1\rvert}
\newcommand*{\norm}[1]{\lVert#1\rVert}
\newcommand*{\set} [1]{\{#1\}}
\newcommand*{\setm}[2]{\{\,#1\mid#2\,\}}   
\newcommand*{\iprod}[2]{\langle#1,#2\rangle}
\newcommand*{\Setm}[2]{\left\{\,#1\,\middle|\,#2\,\right\}}
\newcommand*{\Lp}[1][p]{L^{#1}}
\newcommand*{\Lploc}[1][p]{L^{#1}_{\text{loc}}}
\newcommand*{\lp}[1][p]{\ell^{#1}} 
\newcommand{\pmat}[1]{\begin{pmatrix}#1\end{pmatrix}}
\newcommand{\pmatsmall}[1]{\begin{psmallmatrix}#1\end{psmallmatrix}}
\newcommand*{\Abs}[2][default]{\ifthenelse{\equal{#1}{default}}{\left\lvert#2\right\rvert}{\ldelim{#1}{\lvert}#2\rdelim{#1}{\rvert}}}
\newcommand*{\Norm}[2][default]{\ifthenelse{\equal{#1}{default}}{\left\lVert#2\right\rVert}{\ldelim{#1}{\lVert}#2\rdelim{#1}{\rVert}}}
\newcommand*{\Iprod}[3][default]{\ifthenelse{\equal{#1}{default}}{\left\langle#2,#3\right\rangle}{\ldelim{#1}{\langle}#2,#3\rdelim{#1}{\rangle}}}
\newcommand*{\Dualpair}[3][default]{\ifthenelse{\equal{#1}{default}}{\left\langle#2,#3\right\rangle}{\ldelim{#1}{\langle}#2,#3\rdelim{#1}{\rangle}}}
\newcommand{\eq}[1]{\begin{align*}#1\end{align*}}
\newcommand{\eqn}[1]{\begin{align}#1\end{align}}
\newcommand{\gs}{\sigma}
\newcommand{\ga}{\alpha}
\newcommand{\gb}{\beta}
\renewcommand{\gg}{\gamma}
\newcommand{\gd}{\delta}
\newcommand{\gl}{\lambda}
\newcommand{\gw}{\omega}
\newcommand{\ieq}[1]{$#1$}
\newcommand{\inv}{^{-1}}
\newcommand*{\ddb}[2][1]{\ifthenelse{\equal{#1}{1}}{\frac{d}{d#2}}{\frac{d^{#1}}{d#2^{#1}}}}
\newcommand*{\pd}[3][1]{\ifthenelse{\equal{#1}{1}}{\frac{\partial{#2}}{\partial{#3}}}{\frac{\partial^{#1}{#2}}{\partial#3^{#1}}}}
\newcommand*{\keyterm}[1]{\emph{#1}}
\newtheorem{thm}{Theorem}%[section]
\newtheorem{lem}[thm]{Lemma}
\newtheorem{cor}[thm]{Corollary}
\theoremstyle{definition}
\newtheorem{dfn}[thm]{Definition}
\newtheorem{ass}[thm]{Assumption}
\newtheorem{rem}[thm]{Remark}
\newtheorem*{ORP}{The Output Regulation Problem}
\newtheorem*{RORP}{The Robust Output Regulation Problem}
\newcommand{\Gconds}{$\mc{G}$-conditions}
\newcommand{\yref}{y_{\mbox{\scriptsize\textit{ref}}}}
\newcommand{\Omi}{\mathcal{O}}
\newcommand{\Ops}{\mc{O}}
\newcommand*{\pinv}{^{\dagger}}
\newcommand{\CL}{C_\Lambda }
\newcommand{\KL}{K_\Lambda }
\newcommand{\CLt}{\tilde{C}_\Lambda }
\newcommand{\CeL}{C_{e\Lambda} }
\newcommand{\CeLt}{\tilde{C}_{e\Lambda} }
\newcommand{\XB}{X_B }
\newcommand{\XBd}{X_{B_d} }
\newcommand{\XBBd}{X_{(B,B_d)} }
\newcommand{\XBBdt}{X_{(\tilde{B},\tilde{B}_d)} }
\newcommand{\XBL}{X_{(B,L)} }
\newcommand{\XL}{X_L }
\newcommand{\ZG}{Z_{\mc{G}_2} }
\newcommand{\Amo}{A}
\newcommand{\Atmo}{\tilde{A}}
\newcommand{\Tmo}{T}
\newcommand{\Temo}{T_e}
\newcommand{\Aemo}{A_e}
\newcommand{\Aetmo}{\tilde{A}_e}
\newcommand{\Gmo}{\mc{G}_1}
\newcommand{\xeoo}{\tilde{x}_{e0}}
\newcommand{\KoL}{K_1}
\newcommand{\KtL}{K_2^\Lambda}
\newcommand{\KtoL}{K_{21}^\Lambda}
\newcommand{\Bw}{B_d}
\newcommand{\Uw}{U_d}
\newcommand{\BI}{G_2 }
\newcommand{\BIn}[1][n]{G_{2#1}}
\newcommand{\Wf}{W}
\newcommand{\Mlog}{M_{\textup{log}}}
\newcommand{\Jinds}{J}
\newcommand{\ZI}{Z_0}
\renewcommand{\v}{v}
\newcommand{\RBm}[1][k]{R(i\gw_{#1},\Aemo )B_e\phi_{#1}}
\newcommand{\PARsysopspert}{(\tilde{A},\tilde{B},\tilde{B}_d,\tilde{C},\tilde{D},\tilde{E},\tilde{F})}
\newcommand{\PARsysops}{(A,B,B_d,C,D,E,F)}
\newcommand{\CLops}{(A_e,B_e,C_e,D_e)}
\newcommand{\PARcontr}{(\mc{G}_1,\mc{G}_2,K)}
\newcommand{\kZ}{_{k\in\Z}}
\renewcommand{\pmat}[1]{\begin{bmatrix}#1\end{bmatrix}}
\renewcommand{\pmatsmall}[1]{\begin{bsmallmatrix}#1\end{bsmallmatrix}}
\begin{document}

\title[Robust Controllers for Regular Linear Systems]{Robust Controllers for Regular Linear Systems with Infinite-Dimensional Exosystems}

\thispagestyle{plain}

\author{Lassi Paunonen}
\address{Department of Mathematics, Tampere University of Technology, PO.\ Box 553, 33101 Tampere, Finland}
\email{lassi.paunonen@tut.fi}

\begin{abstract}
We construct two error feedback controllers for robust output tracking and disturbance rejection of a regular linear system with nonsmooth reference and disturbance signals. We show that for sufficiently smooth signals the output converges to the reference at a rate that depends on the behaviour of the transfer function of the plant on the imaginary axis. In addition, we construct a controller that can be designed to achieve robustness with respect to a given class of uncertainties in the system, and present a novel controller structure for output tracking and disturbance rejection without the robustness requirement. We also generalize the internal model principle for regular linear systems
with boundary disturbance and for controllers with unbounded input and output operators. The construction of controllers is illustrated with an example where we consider output tracking of a nonsmooth periodic reference signal for a two-dimensional heat equation with boundary control and observation, and with periodic disturbances on the boundary.  
\end{abstract}

\thanks{The research was supported by the Academy of Finland grant number 298182.}
\subjclass[2010]{%
%Primary (Secondary)
93C05, %Linear systems
93B52, %Feedback control
(47D06). %One-parameter semigroups and linear evolution equations
}
\keywords{Robust output regulation, regular linear systems, controller design, feedback, stability.} 

\maketitle

\section{Introduction}
\label{sec:intro}

The purpose of this paper is to construct controllers for robust output regulation 
of a \keyterm{regular linear system}\footnote{Here $\CL$ and $\KL$ denote the $\Lambda$-extensions of $C$ and $K$, respectively. See Section~\ref{sec:plantexo} for details.}~\cite{Wei89b,Wei94,StaWei02}%
\begin{subequations}
  \label{eq:plantintro}
  \eqn{
  \dot{x}(t)&= Ax(t)+Bu(t) + \Bw w(t), \qquad x(0)=x_0\in X\\
  y(t)& = \CL x(t) + Du(t)
  }
\end{subequations}%
on an infinite-dimensional Banach space $X$.
The main goal in the control problem is to achieve asymptotic convergence of the output $y(t)$
to a given
reference signal $\yref(t)$ despite external disturbance signals $w(t)$. In addition, it is required that the controller is robust in the sense that output tracking is achieved even under perturbations and uncertainties in the operators $(A,B,B_d,C,D)$ of the plant. 
The class of regular linear systems 
facilitates the study of robust output tracking and disturbance rejection for 
many important classes partial differential equations with boundary control and observation 
with corresponding
 unbounded operators $B$, $B_d$ and $C$~\cite{ByrGil02,GuoZha07,ZwaLeG10,NatGil14}.
In this paper we continue the work on designing robust controllers for regular linear systems begun recently in~\cite{Pau16a}.

The reference signal $\yref(t)$ and the disturbance signals $w(t)$ considered in the
robust output regulation 
problem are assumed to be 
generated by an \keyterm{exosystem} of the form%
\begin{subequations}
  \label{eq:exointro}
  \eqn{
  \dot{v}(t)&=Sv(t), \qquad v(0)=v_0\in W\\
  w(t)&=Ev(t)\\
  \yref(t)&=-Fv(t).
  }
\end{subequations}%
In the case where the exosystem~\eqref{eq:exointro} is a system of ordinary differential equations on a finite-dimensional space $W$, the class of reference and disturbance signals consists
of finite linear combinations of trigonometric functions and polynomially increasing terms.  
In this paper we concentrate on 
output tracking and disturbance rejection for 
a general class of
nonsmooth periodic and almost periodic reference and disturbance signals. Such exogeneous signals can be generated with infinite-dimensional exosystems on the Hilbert space $W=\lp[2](\C)$ where $S=\diag(i\gw_k)_{k\in\Z}$ is an unbounded diagonal operator containing the frequencies $\set{\gw_k}_{k\in\Z}$ 
that are present in
the signals $\yref(\cdot)$ and $w(\cdot)$.
In particular, any continuous $\tau$-periodic signal can be generated with an exosystem of the form~\eqref{eq:exointro} where
$S=\diag \bigl(i \frac{2k\pi}{\tau}\bigr)_{k\in\Z}$~\cite{ImmPoh05b}.
Output tracking and disturbance rejection of nonsmooth signals with high accuracy have applications in the control of motor and disk drive systems and in power electronics~\cite{CosGri09}. 
Output tracking of
signals generated by an infinite-dimensional exosystem have been studied using state space methods in~\cite{ImmPoh06b,HamPoh10,PauPoh10,NatGil14,PauPoh14a}, and using frequency domain techniques in~\cite{YamHar88,RebWei03,YliPoh06,LaaPoh15}. 
Robust tracking of nonsmooth periodic functions has also been studied extensively in~\keyterm{repetitive control}~\cite{HarYam88,Yam93,WeiHaf99} where the control objective is to achieve precise tracking for a finite number of frequency components of $\yref(\cdot)$.

As the main results of the paper we introduce two methods for constructing a regular error feedback controller of the form%
\begin{subequations}%
  \label{eq:controllerintro}
  \eqn{
  \dot{z}(t) &= \mc{G}_1z(t)+\mc{G}_2(y(t)-\yref(t)), \qquad z(0)=z_0\in Z\\
  u(t) &= \KL z(t)
  }%
\end{subequations}%
to achieve robust output tracking and disturbance rejection for the regular linear system~\eqref{eq:plantintro}. 
The \keyterm{internal model principle} 
of linear control theory
states that in order to solve the robust output regulation problem it is both necessary and sufficient for the feedback controller~\eqref{eq:controllerintro} to include a suitable number of independent copies of the dynamics of the exosystem~\eqref{eq:exointro} and to achieve closed-loop stability.
This fundamental characterization of robust controllers was originally presented for finite-dimensional linear systems by Francis and Wonham~\cite{FraWon75a} and Davison~\cite{Dav76} in 1970's, and it was later generalized for infinite-dimensional linear systems with finite and infinite-dimensional exosystems in~\cite{PauPoh10,PauPoh14a}.
The internal model principle also implies that
the robust controllers always tolerate a class of uncertainties and inaccuracies in the parameters $\mc{G}_2$ and $K$ and in certain parts of the operator $\mc{G}_1$
of the controller~\eqref{eq:controllerintro}. This property can be exploited in controller design as it 
sometimes
allows the use of approximations in defining $\mc{G}_1$, $\mc{G}_2$, and $K$, provided that the internal model property is preserved and the closed-loop system achieves the necessary stability properties.

The two robust controllers constructed in this paper utilize two different internal model based structures that are naturally complementary to each other. The first construction is based on a new block triangular controller structure that was first introduced in~\cite{Pau15a,Pau16a} for control of regular linear systems with finite-dimensional exosystems. The second controller uses the observer based structure that was used to solve the robust output regulation problem for an infinite-dimensional exosystem in~\cite{HamPoh10} in the case where the plant had bounded input and output operators.
In this paper we generalize both of the controller structures to accommodate for an infinite-dimensional internal model and for unbounded operators $B$, $B_d$, and $C$ in the plant. In particular, this requires the use of new techniques in the analysis of the well-posedness and stability of the resulting closed-loop systems. The constructions we present allow the use of unbounded feedback and output injection operators in achieving exponential stability of the pairs $(A,B)$ and $(C,A)$.  

The two controllers that we construct end up possessing slightly differing properties.  
In the case of an infinite-dimensional exosystem the reference and disturbance signals are required to have a certain minimum level of smoothness in order for the robust output regulation problem to be solvable, and the exact level 
depends on the behaviour of the transfer function $P(\gl) = \CL R(\gl,A)B+D$ of the plant at the frequencies $\gl=i\gw_k$ of the exosystem~\cite{ImmPoh06a,HamPoh10}. 
More precisely, 
faster growth of the norms $\norm{P(i\gw_k)\pinv}$ of the Moore--Penrose pseudoinverses of $P(i\gw_k)$ as $\abs{k}\to \infty$ leads to a higher minimal level of smoothness for $\yref(t)$ and $w(t)$.  
Our results demonstrate that the required level of smoothness is in general lower in the case of the new controller structure than in the case of the observer based controller structure.  
Moreover, the new controller structure can be used in a situation where the plant has a larger number of inputs than outputs, whereas the construction of the second observer based controller requires that the input and output spaces of the plant are isomorphic.

Since the output operator $C$ of the plant is in general unbounded, the regulation error $e(t)=y(t)-\yref(t)$ is not guaranteed to converge to zero as $t\to \infty$.
In this paper the convergence of the regulation error is instead considered in the sense that
\eqn{
\label{eq:regerrdecintro}
\int_t^{t+1} \norm{y(s)-\yref(s)}ds\to 0, \qquad \mbox{as} ~ t\to \infty.
}
This condition is equivalent to requiring that the averages $\frac{1}{\eps}\int_t^{t+\eps}\norm{e(s)}ds$ of the error over the intervals $[t,t+\eps]$ for any fixed $\eps>0$ converge to zero as $t\to \infty$.
The assumption that the exosystem~\eqref{eq:exointro} is infinite-dimensional further leads to situation where the regulation error is not guaranteed to decay at an exponential rate. 
However, it has been observed in~\cite{PauPoh13b,BouBou13} that under suitable assumptions on the plant~\eqref{eq:plantintro} with bounded operators $B$ and $C$ it is possible to achieve rational decay of the regulation error for sufficiently smooth reference and disturbance signals.
In this paper we present new results that establish \keyterm{a priori} decay rates for the regulation error for sufficiently smooth reference and disturbance signal.
The results we present are based on a new method for nonuniform stabilization of the infinite-dimensional internal model in the controller, and on the subsequent analysis of the closed-loop system using recent results on \keyterm{nonuniform stability} of semigroups of operators~\cite{LiuRao05,BatDuy08,BorTom10,BatChi16}.
Beyond obtaining decay rates for the regulation error for the particular controllers we introduce a general methodology for applying the theory of nonuniform stability of semigroups in the study of regular linear control systems.

The following theorem presents a simplified version of the main result regarding the nonuniform decay rates of the regulation error.  The result demonstrates that the rate of decay of the regulation error is dependent on the rate of growth of the norms $\norm{P(i\gw_k)\pinv}$ as $\abs{k}\to \infty$.  
For the detailed assumptions on the system~\eqref{eq:plantintro}, the exosystem~\eqref{eq:exointro} and the controller~\eqref{eq:controllerintro}, see Section~\ref{sec:plantexo}.
A more general version of Theorem~\ref{thm:decayintro} is presented in
Section~\ref{sec:CLnonuniformstab}.
Here we denote by $\norm{x}_{\Dom(A)} = \norm{Ax}+\norm{x}$ the graph norm of a linear operator~$A$.

\begin{thm}
  \label{thm:decayintro}
  Assume $B$ and $B_d$ are bounded, $\sup_{k\in\Z} \norm{R(i\gw_k,A)}<\infty$, and the pair $(\CL,A)$ can be stabilized with bounded output injection. 

  If there exist $M,\ga_0>0$
  such that $\norm{P(i\gw_k)\pinv}\leq M(1+\abs{\gw_k}^{\ga_0})$ for all $k\in\Z$,
  then for any
   $\ga > 2\ga_0+1$
  the controllers in this paper 
  can be construced in such a way that
  \eq{
  \int_t^{t+1} \norm{e(s)}ds\leq M_e^e \left( \frac{\log t}{t} \right)^{\frac{1}{\ga}}
  \left( \norm{x_0}_{\Dom(A)}+ \norm{ z_0}_{\Dom(\mc{G}_1)}+ \norm{v_0}_{\Dom(S)} \right)
  }
  for some $M_e^e>0$ and 
for all $v_0\in \Dom(S)$, $x_0\in \Dom(A)$ and $z_0\in \Dom(\mc{G}_1)$.

If $\norm{P(i\gw_k)\pinv}\leq Me^{\ga_0 \abs{\gw_k}}$ for some $M,\ga_0>0$ and for all $k\in\Z$, 
  then for any
   $\ga > \ga_0$
  the controllers  
  can be construced in such a way that
  \eq{
  \int_t^{t+1} \norm{e(s)}ds\leq \frac{\ga M_e^e}{\log t}
  \left( \norm{x_0}_{\Dom(A)}+ \norm{ z_0}_{\Dom(\mc{G}_1)}+ \norm{v_0}_{\Dom(S)} \right)
  }
  for some $M_e^e>0$ and
for all $v_0\in \Dom(S)$, $x_0\in \Dom(A)$ and $z_0\in \Dom(\mc{G}_1)$.  
\end{thm}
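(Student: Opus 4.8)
The plan is to convert the decay of the averaged regulation error into a nonuniform stability estimate for the closed-loop semigroup, and then to read that estimate off from a growth bound for the resolvent of the closed-loop generator on the imaginary axis whose rate is governed by $\norm{P(i\gw_k)\pinv}$. First I would use the output regulation framework from the preceding sections: once the controller contains the internal model and stabilizes the closed-loop system, the Sylvester equation $\Sigma S=A_e\Sigma+B_e$ has a solution $\Sigma\in\Lin(W,X_e)$ with $\ran(\Sigma)\subset\Dom(\CeL)$, and the regulation error admits the representation $e(t)=\CeL\bigl(x_e(t)-\Sigma v(t)\bigr)$. Setting $\tilde{x}_e(t)=x_e(t)-\Sigma v(t)$, the error state solves $\tilde{x}_e(t)=T_e(t)\tilde{x}_e(0)$ with $\tilde{x}_e(0)=(x_0,z_0)-\Sigma v_0$ and $T_e(t)$ the closed-loop semigroup, so that
\[
\int_t^{t+1}\norm{e(s)}\,ds=\int_t^{t+1}\norm{\CeL T_e(s)\tilde{x}_e(0)}\,ds .
\]
Because $x_0\in\Dom(A)$, $z_0\in\Dom(\mc{G}_1)$ and $v_0\in\Dom(S)$ force $\tilde{x}_e(0)\in\Dom(A_e)$ — here one uses that $\Sigma$ maps $\Dom(S)$ into $\Dom(A_e)$, which is immediate from the Sylvester equation — the problem reduces to quantifying the decay of $T_e(t)$ on $\Dom(A_e)$, the averaging over $[t,t+1]$ being exactly what absorbs the unboundedness of $\CeL$.

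Next I would establish the imaginary-axis resolvent bound for $A_e$. Under the standing hypotheses (bounded $B,B_d$, $\sup_k\norm{R(i\gw_k,A)}<\infty$, and $(\CL,A)$ stabilizable with bounded output injection) the feedback and output injection are chosen so that $T_e(t)$ is uniformly bounded and $i\R\subset\rho(A_e)$. Away from the frequencies $\set{i\gw_k}$ the resolvent stays bounded thanks to the exponential stabilization of $(A,B)$ and $(\CL,A)$; near each critical point $i\gw_k$ the size of $R(is,A_e)$ is instead dictated by the invertibility of an operator assembled from the plant transfer function $P(i\gw_k)$. The new nonuniform stabilization of the infinite-dimensional internal model is the device that lets one prescribe the admissible growth of these resolvent peaks, so that $\norm{P(i\gw_k)\pinv}\le M(1+\abs{\gw_k}^{\ga_0})$ yields a polynomial bound $\norm{R(is,A_e)}\le C(1+\abs{s}^{\ga})$ for any chosen $\ga>2\ga_0+1$, and $\norm{P(i\gw_k)\pinv}\le Me^{\ga_0\abs{\gw_k}}$ yields an exponential bound $\norm{R(is,A_e)}\le Ce^{\ga\abs{s}}$ for any $\ga>\ga_0$.

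With these bounds in hand I would invoke the quantified theory of nonuniform stability. The Batty--Duyckaerts theorem~\cite{BatDuy08}, sharpened in the polynomial regime by Borichev--Tomilov~\cite{BorTom10}, controls $\norm{T_e(t)A_e\inv}$ through the inverse of $\Mlog(s)=M(s)\bigl(\log(1+M(s))+\log(1+\abs{s})\bigr)$, where $M(s)$ is the resolvent bound. For $M(s)=C(1+\abs{s}^{\ga})$ this gives $\norm{T_e(t)A_e\inv}=O\bigl((\log t/t)^{1/\ga}\bigr)$, and for $M(s)=Ce^{\ga\abs{s}}$ it gives $\norm{T_e(t)A_e\inv}=O(1/\log t)$. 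Substituting into the error representation and splitting $\norm{\tilde{x}_e(0)}_{\Dom(A_e)}$ into the three graph norms $\norm{x_0}_{\Dom(A)}$, $\norm{z_0}_{\Dom(\mc{G}_1)}$ and $\norm{v_0}_{\Dom(S)}$ then produces the two stated estimates.

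I expect the imaginary-axis resolvent estimate to be the main obstacle. Since $C$ (and possibly $K$) are unbounded, $A_e$ is not a bounded perturbation of a tractable operator, so its resolvent cannot simply be read off; one must build it explicitly near each $i\gw_k$ from the transfer function and then verify that the pointwise pseudoinverse bounds assemble into a single operator bound valid uniformly on $i\R$. Managing the interaction of the unbounded operators with the infinite-dimensional internal model, and accounting for the loss that turns $\ga_0$ into $2\ga_0+1$ in the polynomial case (reflecting both the appearance of $\norm{P(i\gw_k)\pinv}$ in the stabilizing gains and the simple-pole behaviour of the exosystem resolvent at $i\gw_k$), is the technical core of the construction.
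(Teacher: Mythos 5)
Your outline follows the same route as the paper's own proof: represent the error as $e(t)=\CeL T_e(t)(x_{e0}-\Sigma v_0)$ via the regulator equations (Theorem~\ref{thm:ORP}), absorb the unboundedness of $\CeL$ by admissibility of $C_e$ over the window $[t,t+1]$, reduce to nonuniform decay of $T_e(t)$ on $\Dom(A_e)$, obtain that decay from an imaginary-axis resolvent bound through the Batty--Duyckaerts/Borichev--Tomilov theory with the same function $\Mlog$, and finally split the graph norm of $x_{e0}-\Sigma v_0$ into the three stated graph norms using the boundedness of $B$, $B_d$ and $L$ --- this is exactly the chain Theorem~\ref{thm:CLnonunifstab} $\Rightarrow$ Corollary~\ref{cor:CLnonuniformbddops}. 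Your domain argument is also fine in this bounded setting: since $B_d$ and $L$ are bounded, $\mc{G}_2$ and $B_e$ are bounded, so the Sylvester equation gives $\Sigma(\Dom(S))\subset\Dom(A_e)$ directly, as you say.

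The genuine gap is that the one step carrying the whole theorem --- the bound $\norm{R(i\gw,A_e)}\leq Cg(\abs{\gw})$ for all $\gw\in\R$, with $g$ of polynomial order $\ga>2\ga_0+1$, respectively exponential --- is asserted rather than proved: you say the internal-model stabilization ``lets one prescribe the admissible growth of these resolvent peaks,'' but that prescription is precisely the content of the paper's Theorem~\ref{thm:IMstabnonuniform} together with the similarity argument in the proof of Theorem~\ref{thm:ContrOneMain}. Concretely, the paper takes $K_{1k}=\gg_k P_L(i\gw_k)\pinv/\norm{P_L(i\gw_k)\pinv}$ and $G_{2k}=-(P_L(i\gw_k)K_{1k})^\ast$, so that $Q_eA_eQ_e\inv$ is block upper triangular with exponentially stable outer blocks and middle block $G_1-G_2G_2^\ast$; Theorem~\ref{thm:IMstabnonuniform} then shows, by a contradiction argument that localizes approximate eigenvectors at the nearest frequency $\gw_{m_n}$, that $\norm{R(i\gw,G_1-G_2G_2^\ast)}\lesssim g(\abs{\gw})$ whenever $\norm{G_{2k}\pinv}^2\leq g(\abs{\gw_k})$, and the triangular structure transfers this to $A_e$. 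This also corrects your accounting of the exponent: the resolvent peak at $i\gw_k$ has size $\norm{(P_L(i\gw_k)K_{1k})\inv}^2=\norm{P_L(i\gw_k)\pinv}^2/\abs{\gg_k}^2$ (note the squares in conditions (a)--(b) of Theorem~\ref{thm:CLnonunifstab}), so the factor $2$ comes from the quadratic damping $G_2G_2^\ast$, while the $+1$ comes from the stabilization requirement $(\gg_k)_{k\in\Z}\in\lp[2](\C)$, which forces $\gg_k=\abs{\gw_k}^{-\gb}$ with $\gb>1/2$ at best; the ``simple-pole behaviour of the exosystem resolvent'' you invoke plays no role. For the same reason your intermediate claim in the exponential case is off: the construction yields $\norm{R(i\gw,A_e)}=O(e^{\ga'\abs{\gw}})$ only for $\ga'>2\ga_0$, not for every $\ga>\ga_0$; the stated conclusion survives only because the doubling merely changes the constant in the $O(1/\log t)$ rate, unlike the polynomial case where it changes the rate itself. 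Without Theorem~\ref{thm:IMstabnonuniform} or an equivalent quantitative lemma (and the observation that $\norm{P_L(i\gw_k)\pinv}$ inherits the growth of $\norm{P(i\gw_k)\pinv}$ under the standing assumptions), the passage from $\norm{P(i\gw_k)\pinv}\leq M(1+\abs{\gw_k}^{\ga_0})$ to the resolvent bound for $A_e$ --- and hence the theorem --- remains unproved.
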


The new controller structure used in this paper was introduced in~\cite{Pau15a} to solve the robust output regulation problem in the situation where robustness is only required with respect to a given class $\Ops_0$ of perturbations. 
This is the case, for example, if a single controller is required to regulate a set $\set{P_1(\gl),\ldots,P_N(\gl)}$ of plants or states of the same plant, or when only some of the parameters of the plant are known to contain uncertainty.
In such a situation it is possible that the internal model in the controller can be replaced with a reduced order internal model containing smaller numbers of copies of some of the frequencies of the exosystem~\cite{PauPoh13a}. In this paper we generalize the controller achieving robustness with respect to a given class $\Ops_0$ of perturbations presented in~\cite{Pau15a,Pau16a} for regular linear systems with infinite-dimensional exosystems. 
Finally, we use the new controller structure to construct a novel controller that achieves output tracking and disturbance rejection without the requirement for robustness with respect to perturbations in the parameters of the plant.

Controller design for robust output regulation for regular linear systems with finite-dimensional exosystems has been studied previously in~\cite{Sai15phd,Pau16a}.
The output tracking and disturbance rejection for regular linear systems without the robustness requirement have been studied in~\cite{NatGil14,XuDub16}. 
In this paper we also extend the characterization of the controllers achieving output regulation via the regulator equations and the internal model principle for regular linear systems in the situation where the operator $B_d$ is unbounded and the controller~\eqref{eq:controllerintro} is a regular linear system. These results generalize the ones in~\cite{PauPoh14a,Sai15phd} where $B_d$ was assumed to be bounded and in~\cite{NatGil14} where feedforward control was studied.
The controllers constructed in this paper are infinite-dimensional due to the 
full order observers
and the internal models of the exosystem~\eqref{eq:exointro}.
Using model reduction methods 
to find
finite-dimensional approximations 
of the presented controllers 
is an important topic for future research.

The paper is structured as follows. In Section~\ref{sec:plantexo} we state the standing assumptions on the plant, the exosystem, the controller, and the closed-loop system. In Section~\ref{sec:RORP} we formulate the main control problems 
and present a generalization of the internal model principle. The robust controller based on the new controller structure is constructed in Section~\ref{sec:ContrOne}. Subsequently, the same structure is used to construct a controller with a reduced order internal model in Section~\ref{sec:ContrROIM} and a controller for output regulation without the robustness requirement in Section~\ref{sec:ContrOneORP}. The observer based controller is constructed in Section~\ref{sec:ContrTwo}. In Section~\ref{sec:CLnonuniformstab} we study the nonuniform decay rates for the state of the closed-loop system and the regulation error. The example on robust output regulation for a two-dimensional heat equation is studied in Section~\ref{sec:heatex}.

\section{The System, The Controller and The Closed-Loop System}
\label{sec:plantexo}

If $X$ and $Y$ are Banach spaces and $A:X\rightarrow Y$ is a linear operator, we denote by $\Dom(A)$, $\ker(A)$ and $\ran(A)$ the domain, kernel and range of $A$, respectively. The space of bounded linear operators from $X$ to $Y$ is denoted by $\Lin(X,Y)$. If \mbox{$A:X\rightarrow X$,} then $\gs(A)$, $\gs_p(A)$ and $\rho(A)$ denote the spectrum, the point spectrum and the \mbox{resolvent} set of $A$, respectively. For $\gl\in\rho(A)$ the resolvent operator is  \mbox{$R(\gl,A)=(\gl -A)^{-1}$}.  The inner product on a Hilbert space is denoted by $\iprod{\cdot}{\cdot}$.
For two sequences $(f_k)_{k\in\Z}\subset X$ and $(g_k)_{k\in\Z}\in \R_+$ we denote $\norm{f_k} = O(g_k)$ if there exist $M_g,N_g>0$ such that $\norm{f_k}\leq M_g g_k$ whenever $\abs{k}\geq N_g$. Similarly, for  $f:I\subset \R\to X$ and $g:\R_+\to \R_+$ we denote $\norm{f(t)}=O(g(t))$ if there exist $M_g,T_g>0$ such that $\norm{f(t)}\leq M_g g(t)$ whenever $\abs{t}\geq T_g$.
We denote $f(t)\lesssim g(t)$ and $f_k\lesssim g_k$ if there exist $M_1,M_2>0$ such that $f(t)\leq M_1 g(t)$ and $f_k\leq M_2g_k$ for all values of the parameters $t$ and $k$.

Throughout the paper we assume that
the plant $(A,B,B_d,C,D)$ in~\eqref{eq:plantintro} is a \keyterm{regular linear system}~\cite{Wei89b,Wei94,StaWei02}
with state $x(t)\in X$, input $u(t)\in U$, output $y(t)\in Y$ and external disturbance $w(t)\in U_d$.
The spaces $X$ and $U_d$ are Banach spaces and $U$ and $Y$ are Hilbert spaces.
The operator $A:\Dom(A)\subset X\rightarrow X$ generates a strongly continuous semigroup $T(t)$ on $X$. For a fixed $\gl_0\in\rho(A)$ we define the scale spaces $X_1 = (\Dom(A), \norm{(\gl_0-A)\cdot})$ and $X_{-1}= \overline{(X,\norm{R(\gl_0,A)\cdot})}$ (the completion of $X$ with respect to the norm $\norm{R(\gl_0,A)\cdot}$)~\cite{TucWei09book},\cite[Sec. II.5]{EngNag00book}.
Also the extensions of the operator $A$ and the semigroup $T(t)$ to the space $X_{-1}$ are denoted by $\Amo: X\subset X_{-1}\rightarrow X_{-1}$ and $\Tmo(t)$, respectively.  
The input and output operators 
$B\in \Lin(U,X_{-1})$, $B_d\in \Lin(U_d,X_{-1})$ and $C\in \Lin(X_1,Y)$ are admissible with respect to $A$ and $D\in \Lin(U,Y)$. 
The admissibility of $B$, $B_d$ and $C$ means that $\int_0^t T(t-s)(Bu(s)+B_dw(s))ds\in X$ for all $t>0$, $u\in \Lp[2](0,t;U)$, and $w\in \Lp[2](0,t;U_d)$, and that for all $t>0$ there exists $\kappa>0$ such that $\int_0^t\norm{CT(s)x}^2ds\leq \kappa^2 \norm{x}^2$ for all $x\in \Dom(A)$.
We define
the $\Lambda$-extension $\CL$ of $C$ as
\ieq{
\CL x = \lim_{\gl\to \infty}\, \gl C R(\gl,A)x
}
with $\Dom(\CL)$ consisting of those $x\in X$ for which the limit exists. If $C\in\Lin(X,Y)$, then $\CL=C$. 
The system $(A,B,B_d,C,D)$ is defined to be \keyterm{regular} if $\ran(R(\gl_0,A)[B,B_d])\subset \Dom(\CL)$ for one/all $\gl_0\in\rho(A)$ and if $\CL R(\gl,A)(Bu+B_dw)\to 0$ as $\gl\to \infty$ with $\gl>0$ for all $u\in U$ and $w\in U_d$.  
For every $x_0\in X$,  $u\in \Lploc[2](0,\infty;U)$, and $w\in \Lploc[2](0,\infty;U_d)$ the system~\eqref{eq:plantintro} has a well-defined mild state and its output $y(t)$ is given by 
\eq{
y(t) = \CL T(t)x_0 +  \CL \int_0^t T(t-s)(Bu(s)+B_dw(s))ds + Du(t)
}
for almost all $t\geq 0$.
The transfer functions $P(\gl)$ and $P_d(\gl)$ from $\hat{u}$ to $\hat{y}$ and from $\hat{w}$ to $\hat{y}$, respectively,  are given by~\cite[Sec. 4]{StaWei02}
\eq{
P(\gl) = \CL R(\gl,A)B + D \qquad \mbox{and} \qquad
P_d(\gl) = \CL R(\gl,A)B_d
}
for all $\gl\in\rho(A)$.
We define 
$X_B=\Dom(A) + \ran(R(\gl_0,\Amo)B)\subset \Dom(\CL)$,
$X_{B_d}=\Dom(A) + \ran(R(\gl_0,\Amo)B_d)\subset \Dom(\CL)$, and 
$\XBBd=\XB+\XBd= \Dom(A)+\ran(R(\gl_0,\Amo)[B, ~B_d])$ 
all of which independent of the choice of $\gl_0\in\rho(A)$.

\begin{ass}
  \label{ass:stabilizability}
  The pairs $(A,B)$ and $(C,A)$ are exponentially stabilizable and detectable, respectively, in the sense that there exist admissible operators $K\in \Lin(X_1,U)$ and $L\in \Lin(Y,X_{-1})$ for which
  \eqn{
  \label{eq:stabassregsys}
  \left( A, \pmat{B, ~ L, ~ B_d}, \pmat{C\\K}\right)
  }
  is a regular linear system and the semigroups generated by $(A+B\KL)\vert_X$ and $(A+L\CL)\vert_X$ are exponentially stable.
\end{ass}

We pose the assumption on the regularity of~\eqref{eq:stabassregsys}
since $K$ and $L$ are allowed to be unbounded.
More details on stabilizability and detectability of regular linear systems can be found in~\cite{Reb93,WeiCur97}.
For techniques for choosing $K$ and $L$, see
for instance~\cite{Reb89a,XuSal96,CurWei06}.

The reference and disturbance signals are generated by an exosystem~\eqref{eq:exointro}
on a separable Hilbert space $W=\lp[2](\Z;\C)$. 
 We denote 
 by
 $\set{\phi_k}_{k\in\Z}$ 
 the canonical orthonormal basis  of $W$.
 The operator $S: \Dom(S)\subset W\to W$ is defined as

  \eq{
  Sv = \sum_{k\in\Z} i\gw_k \iprod{v}{\phi_k}\phi_k, \qquad
  v\in \Dom(S) = \Setm{v\in W}{ \sum_{k\in\Z} \abs{\gw_k}^2 \abs{\iprod{v}{\phi_k}}^2<\infty},
  }
  where the eigenvalues $\set{i\gw_k}_{k\in\Z}\subset i\R$ of $S$ are distinct and have a uniform gap, i.e., $\inf_{k\neq l}\abs{\gw_k-\gw_l}>0$. The operator $S$ generates an isometric group $T_S(t)$ on $W$.  We finally assume that $E\in \Lin(W,X)$ and $F\in \Lin(W,Y)$ are Hilbert--Schmidt operators, i.e., $(E\phi_k)_{k\in\Z}\in \lp[2](U_d)$ and $(F\phi_k)_{k\in\Z}\in \lp[2](Y)$.

All the results presented in this paper also trivially apply to the situation where $W=\C^q$ for some $q\in\N$ and $S=\diag(i\gw_k)_{k=1}^q$ is a diagonal matrix.

\begin{ass}
  \label{ass:PKsurj}
  The  operators $K$ and $L$ in Assumption~\textup{\ref{ass:stabilizability}} can be chosen in such a way that 
  \eq{
  P_K(i\gw_k) &= (\CL + D\KL)R(i\gw_k,A+B\KL)B + D\\
  P_L(i\gw_k) &= \CL R(i\gw_k,\Amo +L\CL)(B+L D) + D
  }
  are surjective for all $k\in\Z$.
\end{ass}

Assumption~\ref{ass:PKsurj} requires that none of the frequencies $\set{i\gw_k}_k\subset \gs(S)$ 
is a \keyterm{transmission zero} of the transfer functions $P_K(\gl)$ or $P_L(\gl)$ of the stabilized systems $(A+B\KL,B,\CL+D\KL,D)$ and $(A+L\CL,B+LD,\CL,D)$, respectively. 
Since transmission zeros are invariant under state feedback and output injection, Assumption~\ref{ass:PKsurj} is 
satisfied whenever $\set{i\gw_k}_k\subset \rho(A)$ and $P(i\gw_k)$ are surjective, and if it satisfied for some $K$ and $L$, then it is satisfied for all stabilizing operators $K$ and $L$. 
In the case
$\set{i\gw_k}_k\subset \rho(A)$ the surjectivity of $P(i\gw_k)$ has been shown to be necessary for robust output regulation,
see~\cite[Cor. V.3]{ByrLau00},\,\cite[Thm. V.2]{NatGil14}, and~\cite[Sec. 5]{PauPoh14a}.  
However, Assumption~\ref{ass:PKsurj} 
can also be used if 
$\set{i\gw_k}_k\cap \gs(A)\neq \varnothing$,
which is the case, e.g., for the heat system in Section~\ref{sec:heatex}.
For examples on verifying Assumption~\ref{ass:PKsurj} and 
locating the zeros of infinite-dimensional
systems, see~\cite[Sec. 5]{ImmPoh06a}, \cite[Sec. 9]{HamPoh10}, and~\cite{CurMor09}. 

In this paper we will solve the robust output regulation problem with a dynamic error feedback controller~\eqref{eq:controllerintro}
on a Banach space $Z$. We assume that $\mc{G}_1:\Dom(\mc{G}_1)\subset Z\rightarrow Z$ generates a semigroup on $Z$, and 
$\mc{G}_2\in\Lin(Y,Z_{-1})$
and $K\in\Lin(Z_1,U)$ are admissible, and $(\mc{G}_1,\mc{G}_2,K)$ is a regular linear system. We denote by $\KL$ the $\Lambda$-extension of $K$ and for $\gl_0\in\rho(\mc{G}_1)$ we define $\ZG = \Dom(\mc{G}_1)+ \ran(R(\gl_0,\Gmo ) \mc{G}_2)\subset \Dom(\KL)$.
Also the extension of $\mc{G}_1$ to the space $Z_{-1}$ is denoted by $\mc{G}_1: Z\subset Z_{-1}\to Z_{-1}$.

The system and the controller can be written together as a closed-loop system on the Banach space $X_{e}=X\times Z$. This composite system with state $x_e(t)=(x(t), z(t))^T$ can be written on $X_{-1}\times Z_{-1}$ as
  \eq{
  \dot{x}_e(t) &= A_ex_e(t)+B_ev(t), \qquad x_e(0)=x_{e0} ,\\
  e(t) &= \CeL x_e(t)+D_ev(t),
  }
where $e(t)= y(t)- \yref(t)$ is the regulation error, $x_{e0}=(x_0, z_0)^T$,  
  \eq{
  A_e=\pmat{\Amo&B\KL\\\mc{G}_2\CL&\Gmo +\mc{G}_2D\KL}, \qquad 
  B_e=\pmat{B_dE\\\mc{G}_2F},
  }
$C_e= \bigl[\CL, ~ D\KL\bigr]$, and
$D_e=F$.  
We also denote 
$B_e^0 =
\left[ {B_d\atop 0} {0\atop \mc{G}_2} \right]
\in \Lin(U_e,X_{-1}\times Z_{-1})$ 
where $U_e = U_d\times Y$
so that $B_e=B_e^0 \left[ E\atop F \right]$.
We choose the domain of $A_e$ as
\eqn{
\label{eq:Aedom}
\Dom(A_e)&=
\Biggl\{\pmat{x\\z}\in \XB\times\ZG ~\Biggm|~ 
\biggl\{ \begin{array}{c}
  \Amo x+B\KL z\in X\\
  \Gmo z+ \mc{G}_2(\CL x+D\KL z)\in Z
\end{array}
\Biggr\} .
}

\begin{thm}
  \label{thm:CLregularity}
  The closed-loop system $\CLops$ is a regular linear system.
  For every $\gl\in\rho(A_e)$ we have
  $\ran(B_e^0)\subset \ran(\gl-A_e^e)$ and
  \ieq{
  R(\gl,\Aemo )B_e^0   = R(\gl,A_e^e)B_e^0 
  }
  where 
  $A_e^e = 
  \left[ {\Amo \atop \mc{G}_2\CL} {B\KL \atop \Gmo +\mc{G}_2D\KL} \right]
  :X_e \to X_{-1}\times Z_{-1}$
  with domain 
 $\Dom(A_e^e)=\XBBd\times \ZG$.
\end{thm}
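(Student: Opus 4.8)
The plan is to realise $\CLops$ as the feedback interconnection of the regular plant $(A,[B,B_d],C,[D,0])$ with the regular controller $(\mc{G}_1,\mc{G}_2,K)$ and then to construct the closed-loop resolvent by hand. The structural feature that makes the interconnection admissible is that the controller carries \emph{no} feedthrough term---its output is $\KL z$---so the feedthrough around the loop is $D\cdot 0=0$ and no operator of the form $I-(\text{feedthrough})$ must be inverted to close the loop. Granting this, the facts that $A_e$ generates a $C_0$-semigroup on $X_e$ and that $B_e$ and $C_e$ are admissible follow from the theory of feedback interconnections of regular linear systems~\cite{Wei94,StaWei02}; the remaining content---the regularity of the composite system and the resolvent identity on $\ran(B_e^0)$---I would extract from an explicit formula for $R(\gl,A_e)$.

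To build the resolvent, fix $\gl\in\rho(A_e)$ and solve $(\gl-A_e^e)\pmat{x\\z}=B_e^0\pmat{w\\\eta}=\pmat{B_dw\\\mc{G}_2\eta}$. Writing the first block as $x=R(\gl,\Amo)(B_dw+B\KL z)$ and using the plant identities $\CL R(\gl,\Amo)B=P(\gl)-D$ and $\CL R(\gl,\Amo)B_d=P_d(\gl)$ collapses the $2\times 2$ system to the single equation
\eqn{
\label{eq:schurprop}
\bigl(I-P_{\mc{G}}(\gl)P(\gl)\bigr)\KL z = P_{\mc{G}}(\gl)\bigl(\eta+P_d(\gl)w\bigr),
}
where $P_{\mc{G}}(\gl)=\KL R(\gl,\Gmo)\mc{G}_2$ is the controller transfer function. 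The invertibility of the factor $I-P_{\mc{G}}(\gl)P(\gl)$ on $U$ is exactly what $\gl\in\rho(A_e)$ supplies; solving~\eqref{eq:schurprop} for $\zeta:=\KL z$ and back-substituting gives $z=R(\gl,\Gmo)\mc{G}_2\bigl(P(\gl)\zeta+\eta+P_d(\gl)w\bigr)\in\ZG$ and $x=R(\gl,\Amo)(B_dw+B\zeta)\in\XBBd$. A direct substitution then confirms that this pair solves $(\gl-A_e^e)\pmat{x\\z}=B_e^0\pmat{w\\\eta}$ with all products read in $X_{-1}\times Z_{-1}$ and that $\KL z=\zeta$, so the construction is self-consistent. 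Since $\pmat{x\\z}$ lies in $\XBBd\times\ZG=\Dom(A_e^e)\subset X_e$ and $\Aemo$ acts on it by the same block expression as $A_e^e$, it is the unique $X_e$-solution of $(\gl-\Aemo)x_e=B_e^0\pmat{w\\\eta}$; this yields at once $\ran(B_e^0)\subset\ran(\gl-A_e^e)$ and $R(\gl,\Aemo)B_e^0=R(\gl,A_e^e)B_e^0$.

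Regularity of $\CLops$ is then read off the same formula. The inclusion $\ran(R(\gl_0,\Aemo)B_e^0)\subset\Dom(\CeL)$ is immediate from $\XBBd\subset\Dom(\CL)$ and $\ZG\subset\Dom(\KL)$, while the decay of the strictly proper part follows from
\eq{
\CeL R(\gl,\Aemo)B_e^0\pmat{w\\\eta}=\CL x+D\KL z=P_d(\gl)w+P(\gl)\zeta ,
}
because $P_d(\gl)\to 0$, $P(\gl)\to D$ and $P_{\mc{G}}(\gl)\to 0$ as $\gl\to\infty$ (the last since the controller has no feedthrough), so that $\zeta\to 0$ and the whole expression tends to $0$. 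As $E$ and $F$ are bounded, regularity with respect to $B_e^0$ transfers to the actual input operator $B_e=B_e^0\pmat{E\\F}$, with $F=D_e$ the closed-loop feedthrough.

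I expect the main obstacle to be the bookkeeping of the unbounded operators throughout: assigning rigorous meaning to the products $B\KL$, $\mc{G}_2\CL$ and $\mc{G}_2D\KL$ as maps into $X_{-1}\times Z_{-1}$, and verifying that the resolvent construction genuinely lands in $\XBBd\times\ZG$ so that the $\Lambda$-extensions $\CL$ and $\KL$ may be applied to its components and cancel as in~\eqref{eq:schurprop}. This is precisely where the argument departs from the bounded-operator case, and where the regularity of \emph{both} subsystems---through $\ran(R(\gl_0,\Amo)[B,B_d])\subset\Dom(\CL)$ and $\ran(R(\gl_0,\Gmo)\mc{G}_2)\subset\Dom(\KL)$---must be invoked to control the limits as $\gl\to\infty$.
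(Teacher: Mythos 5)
Your construction of the closed-loop resolvent has a genuine gap: the Schur-complement argument only makes sense for $\gl\in\rho(A_e)\cap\rho(A)\cap\rho(\mc{G}_1)$, not for every $\gl\in\rho(A_e)$ as the theorem requires. Writing $x=R(\gl,\Amo)(B_dw+B\KL z)$ and $z=R(\gl,\Gmo)\mc{G}_2(\cdots)$, and forming $P(\gl)$, $P_d(\gl)$ and $P_{\mc{G}}(\gl)=\KL R(\gl,\Gmo)\mc{G}_2$, presupposes that $\gl-A$ and $\gl-\mc{G}_1$ are invertible, which $\gl\in\rho(A_e)$ does not give. This is not a peripheral case in this paper: the controllers constructed later have internal models with $\set{i\gw_k}_{k\in\Z}\subset\gs_p(\mc{G}_1)$ while the stabilized closed loop satisfies $\set{i\gw_k}_{k\in\Z}\subset\rho(A_e)$, and Theorem~\ref{thm:CLregularity} is invoked precisely at $\gl=i\gw_k$ (Lemma~\ref{lem:Sylsolvabilitycond}, Theorem~\ref{thm:IMP}, Lemma~\ref{lem:RBeformgeneral}); in the heat example additionally $0=i\gw_0\in\gs(A)$. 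At such $\gl$ your pivotal factor $I-P_{\mc{G}}(\gl)P(\gl)$ is not even defined, so the assertion that its invertibility ``is exactly what $\gl\in\rho(A_e)$ supplies'' cannot stand as written (and even on $\rho(A)\cap\rho(\mc{G}_1)$ that implication itself would need proof, not just assertion).

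The repair is exactly the step the paper takes and your proposal omits: carry out the explicit computation only for $\gl$ with $\re\gl$ large, where $\gl$ automatically lies in $\rho(A)\cap\rho(\mc{G}_1)\cap\rho(A_e)$ because all three operators generate semigroups (the paper does this via the resolvent identities of \cite[Prop.~6.6]{Wei94}), and then propagate the identity to arbitrary $\gl\in\rho(A_e)$ by the resolvent identity: for $\mu>0$ large,
\eq{
R(\gl,\Aemo)B_e^0 \;=\; R(\mu,\Aemo)B_e^0+(\mu-\gl)R(\gl,A_e)R(\mu,\Aemo)B_e^0 ,
}
whose right-hand side lies in $\Dom(A_e^e)$ because $R(\mu,\Aemo)B_e^0=R(\mu,A_e^e)B_e^0$ and $\Dom(A_e)\subset\XBBd\times\ZG=\Dom(A_e^e)$; applying $\gl-A_e^e$ and using that $A_e^e$ extends $A_e$ returns $B_e^0$, which yields both the range inclusion $\ran(B_e^0)\subset\ran(\gl-A_e^e)$ and the identity $R(\gl,\Aemo)B_e^0=R(\gl,A_e^e)B_e^0$. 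The remainder of your argument is sound and close in spirit to the paper's: well-posedness of the closed loop comes from the feedback theory of \cite[Sec.~7]{Wei94} (your observation that the controller has zero feedthrough is what makes the feedback admissible without inverting $I-(\text{feedthrough})$), and the Tauberian regularity condition can indeed be read off your explicit formula for $\CeL R(\gl,\Aemo)B_e^0$ along real $\gl\to\infty$, since real $\gl$ large enough does lie in all three resolvent sets.
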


\begin{proof}
  The results in~\cite[Sec. 7]{Wei94} imply that $(A_e,B_e^0,C_e)$ is regular, since it is part of a system obtained from the regular linear system
\eq{
\left( \pmat{A&0\\0&\mc{G}_1} , \pmat{B&B_d&0\\0&0&\mc{G}_2} ,\pmat{C_\Lambda&0\\0&K_\Lambda} , 
\pmat{D&0&0\\0&0&0} \right) 
}
 with admissible output feedback $\hat{K}= \left[ {0\atop I} {0\atop 0} {I\atop 0}\right]^T$, and thus $\CLops$ is regular as well.
Moreover, the resolvent identities in~\cite[Prop. 6.6]{Wei94} and a straightforward computation shows
that for $\gl\in\rho(A_e)$ with large $\re\gl$ we have 
  \ieq{
  R(\gl,\Aemo )B_e^0 = R(\gl,A_e^e)B_e^0 .
  }
Together with the resolvent identity this implies the last claim of the theorem.
\end{proof}

\subsection{The Class of Perturbations}
\label{sec:pertclass}

In this paper the parameters of the plant are perturbed in such a way that the operators $A$, $B$, $B_d$, $C$, and $D$ are changed to $\tilde{A}: \Dom(\tilde{A})\subset X\rightarrow X$, $\tilde{B}\in \Lin(U,\tilde{X}_{-1})$, $\tilde{B}_d\in \Lin(U,\tilde{X}_{-1})$, $\tilde{C}\in \Lin(\tilde{X}_1,Y)$, and $\tilde{D}\in \Lin(U,Y)$, respectively. Here $\tilde{X}_1$ and $\tilde{X}_{-1}$ are the scale spaces of $X$ related to the operator $\tilde{A}$. We assume that $(\tilde{A},\tilde{B},\tilde{B}_d,\tilde{C},\tilde{D})$ is a regular linear system.
Moreover, the operators $E$ and $F$ are perturbed in such a way that $\tilde{E}\in \Lin(W,X)$ and $\tilde{F}\in \Lin(W,Y)$ are Hilbert--Schmidt.
For $\gl\in \rho(\tilde{A})$ we denote by
$\tilde{P}(\gl) = \tilde{C}_\Lambda R(\gl, \Atmo ) \tilde{B} + \tilde{D}$
and $\tilde{P}_d(\gl) = \tilde{C}_\Lambda R(\gl, \Atmo ) \tilde{B}_d$
the transfer functions of the perturbed plant.

\begin{dfn}
  \label{def:pertclass}
  The class $\Ops$ of considered perturbations consists of operators 
  $(\tilde{A},\tilde{B},\tilde{B}_d,\tilde{C},$ $\tilde{D},\tilde{E},\tilde{F})$ satisfying the above assumptions. 
  In particular, the class $\Ops$ contains the nominal plant, i.e., $\PARsysops\in \Ops$.
\end{dfn}

We denote the operators of the closed-loop system consisting of the perturbed plant and the controller by $\tilde{C}_e=\bigl[\tilde{C}_\Lambda,~\tilde{D}\KL\bigr]$, $\tilde{D}_e = \tilde{F}$ and
\eq{
\tilde{A}_e=\pmat{\Atmo &\tilde{B}\KL\\\mc{G}_2\tilde{C}_\Lambda&\Gmo+\mc{G}_2\tilde{D}\KL},
\qquad \tilde{B}_e = \pmat{\tilde{B}_d\tilde{E}\\\mc{G}_2 \tilde{F}}.
}

\section{The Robust Output Regulation Problem}
\label{sec:RORP}

The main control problem studied in this paper is defined in the following. 

\begin{RORP}
  Choose  $(\mc{G}_1,\mc{G}_2,K)$ in such a way that the following are satisfied:
\begin{itemize}
  \item[\textup{(a)}] The semigroup $T_e(t)$ generated by $A_e$ is strongly stable.
  \item[\textup{(b)}] 
  For all initial states $x_{e0}\in X_e$ and  $v_0\in W$ the regulation error satisfies
\eqn{
\label{eq:errintconv}
\int_t^{t+1} \norm{e(s)}ds\to 0 \qquad \mbox{as} \quad t\to \infty.
}
\item[\textup{(c)}] If
  $(A,B,B_d,C,D,E,F)$ are perturbed to $(\tilde{A},\tilde{B},\tilde{B}_d,\tilde{C},\tilde{D},\tilde{E},\tilde{F})\in \Ops$ 
  in such a way that the perturbed closed-loop system is strongly stable, 
   then for all initial states $x_{e0}\in X_e$ and  
   $v_0\in W$
the regulation error satisfies~\eqref{eq:errintconv}.
\end{itemize}
\end{RORP}

If $B$ and $C$ 
are bounded, then the property~\eqref{eq:errintconv} in the control problem is equivalent to $\norm{e(t)}\to 0$ as $t\to \infty$ (see the proof of Theorem~\ref{thm:ORP} for details).  In Section~\ref{sec:CLnonuniformstab} we will in addition consider rates of convergence of the integrals in~\eqref{eq:errintconv}.

\begin{dfn}
  We call a controller \keyterm{robust with respect to a given perturbation}
$(\tilde{A},\tilde{B},\tilde{B}_d,\tilde{C},$ $\tilde{D},\tilde{E},\tilde{F})\in \Ops$ 
%  $\PARsysopspert\in \Ops$
  for which the perturbed closed-loop system is strongly stable if for these perturbed operators the regulation error satisfies $\int_t^{t+1} \norm{e(s)}ds\to 0$ as $t\to \infty$ 
for all  $x_{e0}\in X_e$ and  $v_0\in W$. 
\end{dfn}

\begin{rem}
  \label{rem:ContrPert}
  Although not considered explicitly in the problem,
robust controllers also tolerate perturbations in $\mc{G}_2$ and $K$ as long as the closed-loop stability and any additional conditions are satisfied. This  is advantageous in control design, because it sometimes allows  $K$ and $\mc{G}_2$ to be replaced with sufficiently accurate approximations.
\end{rem}

The control problem without the robustness requirement is referred to as the \keyterm{output regulation problem}.

\begin{ORP}
  Choose the controller $(\mc{G}_1,\mc{G}_2,K)$ in such a way that parts \textup{(a)} and \textup{(b)} of the robust output regulation problem are satisfied.
\end{ORP}

The following theorem characterises the controllers solving the output regulation problem in terms of the solvability of the so-called \keyterm{regulator equations}~\cite{FraWon75a,ByrLau00,HamPoh10}.
The assumption that the Sylvester equation $\Sigma S=\Aemo \Sigma + B_e$ has a solution $\Sigma\in \Lin(W,X_e)$ satisfying $\ran(\Sigma)\subset \Dom(\CeL)$ 
guarantees that for all initial states $v_0\in W$ of the exosystem
the closed-loop state and the regulation error have well-defined behaviour as $t\to \infty$.
The expressions for 
$A_e$, $B_e$, $C_e$, and $D_e$
can be used to formulate the regulator equations using the parameters of the plant~\eqref{eq:plantintro} and the controller~\eqref{eq:controllerintro}.

\begin{thm}
  \label{thm:ORP} 
  Assume the controller\/ $(\mc{G}_1,\mc{G}_2,K)$ stabilizes the closed-loop system strongly in such a way that
  the Sylvester equation $\Sigma S = \Aemo \Sigma + B_e$ on $\Dom(S)$ has a solution $\Sigma\in \Lin(W,X_e)$ 
   satisfying $\ran(\Sigma) \subset \Dom(\CeL)$.
  Then the following are equivalent:
\begin{enumerate}
\item[\textup{(a)}] The controller\/ $(\mc{G}_1,\mc{G}_2,K)$ solves the output regulation problem.
\item[\textup{(b)}] The regulator equations%
  \begin{subequations}%
    \label{eq:regeqns}
    \eqn{
    \label{eq:ORPSyl}
    \Sigma S&=\Aemo \Sigma +B_e  \\
    0&=\CeL \Sigma +D_e \label{eq:ORPRegconstr}
    }
  \end{subequations}%
  on $\Dom(S)$ have a solution  $\Sigma \in \Lin(W,X_e)$  satisfying $\ran(\Sigma)\subset \Dom(\CeL)$.
\end{enumerate}
\end{thm}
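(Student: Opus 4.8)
The plan is to reduce both implications to a single decomposition of the regulation error. Let $\Sigma\in\Lin(W,X_e)$ be the solution of the Sylvester equation $\Sigma S = \Aemo\Sigma + B_e$ supplied by the hypothesis, and for arbitrary $x_{e0}\in X_e$ and $v_0\in W$ let $v(t)=T_S(t)v_0$ denote the exosystem state. First I would introduce the shifted closed-loop state $\tilde{x}_e(t)=x_e(t)-\Sigma v(t)$. Differentiating and using the Sylvester equation, the inhomogeneous forcing $B_e v(t)$ cancels against $\Sigma S v(t)$, so $\tilde{x}_e(\cdot)$ solves the homogeneous closed-loop equation and therefore $\tilde{x}_e(t)=\Temo(t)(x_{e0}-\Sigma v_0)$. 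Feeding this into the output equation produces the fundamental identity
\begin{equation*}
e(t) = \CeL\Temo(t)(x_{e0}-\Sigma v_0) + (\CeL\Sigma + D_e)\,v(t),
\end{equation*}
valid for almost every $t\geq 0$, in which the first term is a transient governed by the stable semigroup and the second is a persistent term driven by the exosystem.

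For the implication (b)$\Rightarrow$(a), the output regulator equation $\CeL\Sigma+D_e=0$ annihilates the persistent term, leaving $e(t)=\CeL\Temo(t)(x_{e0}-\Sigma v_0)$. To deduce \eqref{eq:errintconv} I would invoke the admissibility of $C_e$ with respect to $A_e$, which provides $\kappa>0$ with $\int_0^1\norm{\CeL\Temo(s)\xi}^2\,ds\leq\kappa^2\norm{\xi}^2$. Writing $\Temo(s)=\Temo(s-t)\Temo(t)$ on the interval $[t,t+1]$ and setting $\xi=\Temo(t)(x_{e0}-\Sigma v_0)$ gives
\begin{equation*}
\int_t^{t+1}\norm{e(s)}^2\,ds \leq \kappa^2\norm{\Temo(t)(x_{e0}-\Sigma v_0)}^2,
\end{equation*}
which tends to zero by the strong stability of $T_e(t)$, and the Cauchy--Schwarz inequality upgrades this to the $L^1$ convergence \eqref{eq:errintconv}.

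For (a)$\Rightarrow$(b), the Sylvester equation already holds by assumption, so only the output equation $\CeL\Sigma+D_e=0$ remains. Here I would exploit the freedom in the initial state by choosing $x_{e0}=\Sigma v_0$, so that the transient vanishes identically and $e(t)=(\CeL\Sigma+D_e)\,v(t)$ for all $t$. Testing against the eigenvectors of $S$ by taking $v_0=\phi_k$ yields $v(t)=e^{i\gw_k t}\phi_k$, whence $\norm{e(t)}=\norm{(\CeL\Sigma+D_e)\phi_k}$ is constant in $t$; the convergence \eqref{eq:errintconv} then forces $(\CeL\Sigma+D_e)\phi_k=0$ for every $k$, and since $\set{\phi_k}_{k\in\Z}$ is an orthonormal basis of $W$ we conclude $\CeL\Sigma+D_e=0$.

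The main obstacle I anticipate is the rigorous justification of the identity $\tilde{x}_e(t)=\Temo(t)(x_{e0}-\Sigma v_0)$ and of the output decomposition, since $\ran(\Sigma)$ is only contained in $\Dom(\CeL)$ and need not lie in $\Dom(A_e)$. Thus $\Sigma v(t)$ is in general not a classical solution, and the cancellation must be carried out at the level of mild solutions in the extended space $X_{-1}\times Z_{-1}$, using the regularity of the closed-loop system from Theorem~\ref{thm:CLregularity} together with the resolvent identity $R(\gl,\Aemo)B_e^0=R(\gl,A_e^e)B_e^0$ to give meaning to $\CeL\Sigma$ and to the interaction of the unbounded operators. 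Confirming that the admissibility estimate extends to the relevant states and that $\CeL\Temo(t)(x_{e0}-\Sigma v_0)$ is well-defined for almost every $t$ is where the bulk of the technical care is needed.
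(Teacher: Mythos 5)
Your proposal is correct and follows essentially the same route as the paper: the same decomposition $e(t)=\CeL T_e(t)(x_{e0}-\Sigma v_0)+(\CeL \Sigma +D_e)T_S(t)v_0$ (which the paper justifies rigorously via the mild-solution integral identity $\int_0^t \Temo(t-s)B_eT_S(s)v\,ds=\Sigma T_S(t)v-T_e(t)\Sigma v$, exactly the technical point you flag), the same admissibility-plus-strong-stability estimate for (b)$\Rightarrow$(a), and the same testing against the eigenvectors $\phi_k$ for (a)$\Rightarrow$(b). The only harmless deviation is that you kill the transient by choosing $x_{e0}=\Sigma v_0$, whereas the paper keeps $x_{e0}$ arbitrary and absorbs the transient term with the triangle inequality before letting $t\to\infty$.
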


\begin{proof}
  Similarly as in the proof of~\cite[Lem. 4.3]{PauPoh14a} equation~\eqref{eq:ORPSyl} implies 
  \eq{
  \int_0^t \Temo(t-s)B_eT_S(s)vds
  =\Sigma T_S(t)v-T_e(t)\Sigma v
  }
  for all $v\in \Dom(S)$, and 
  since the operators on both sides of the equation are in $\Lin(W,X_e)$ and $\Dom(S)$ is dense in $W$, the identity holds for all $v\in W$.
Thus for all $x_{e0}\in X_e$ and $v_0\in W$ the mild state of the closed-loop system has the form
\ieq{
  x_e(t) 
   = T_e(t)(x_{e0}-\Sigma v_0) + \Sigma T_S(t)v_0.
  }
  Since the closed-loop system is regular, we have that $T_e(t)(x_{e0}-\Sigma v_0)\in \Dom(\CeL)$ for almost all $t\geq 0$, and the property $\ran(\Sigma)\subset \Dom(\CeL)$ implies $\Sigma T_S(t)v_0\in \Dom(\CeL)$.
For almost all $t\geq 0$ the regulation error is thus given by% 
  \eqn{
  \label{eq:ORPerrformula}
  e(t) &=\CeL x_e(t)+D_ev(t) 
  =\CeL T_{e}(t)(x_{e0}-\Sigma v_0) +(\CeL \Sigma +D_e)T_S(t)v_0.
  }

If (b) is satisfied, then the regulator equations~\eqref{eq:regeqns} have a solution. 
The formula~\eqref{eq:ORPerrformula} and the regulation constraint $\CeL \Sigma +D_e=0$ imply 
$e(t) = \CeL T_{e}(t)(x_{e0}-\Sigma v_0)$ for almost all $t\geq 0$. 
The admissibility of $C_e$ implies that there exists $\kappa>0$ such that
$\int_0^1\norm{\CeL T_e(s)x} ds\leq \kappa \norm{x}$ for all $x\in X_e$. 
Since $T_e(t)$ is strongly stable, we have
\eq{
 \int_t^{t+1} \norm{e(s)}ds
&=   \int_0^1\norm{\CeL T_e(s)T_e(t)(x_{e0}-\Sigma v_0) } ds 
\leq \kappa  \norm{T_e(t)(x_{e0}-\Sigma v_0) }\to 0
} 
as $t\to \infty$.  Thus the controller solves the output regulation problem and (a) holds.

Assume now that (a) is satisfied and the controller solves the output regulation problem.
By assumption,
there exists $\Sigma \in\Lin(\Wf,X_e)$ such that~\eqref{eq:ORPSyl} is satisfied.  
Our aim is to show $\CeL \Sigma + D_e=0$, which will imply that (b) holds. 
The formula~\eqref{eq:ORPerrformula} implies
$ (\CeL \Sigma +D_e)T_S(t)v_0 =e(t) - \CeL T_{e}(t)(x_{e0}-\Sigma v_0)$ and since the closed-loop system is strongly stable and~\eqref{eq:errintconv} holds, we have
\eq{
\int_t^{t+1} \hspace{-1ex} \norm{(\CeL \Sigma +D_e)T_S(s)v_0}ds
\leq \int_t^{t+1} \hspace{-1ex}\norm{\CeL T_{e}(s)(x_{e0}-\Sigma v_0) } ds 
+  \int_t^{t+1}\hspace{-1ex} \norm{e(s)}ds
\to 0 
}
as $t\to \infty$ for all $v_0\in W$ and $x_{e0}\in X_e$.
If $k\in\Z$, $v_0=\phi_k\in W$ and $x_{e0}\in X_e$, then $T_S(s)\phi_k=e^{i\gw_k s}\phi_k$ for all $s\geq 0$ and
\eq{
\norm{(\CeL \Sigma +D_e)\phi_k} =
\int_t^{t+1} \norm{(\CeL \Sigma +D_e)T_S(s)\phi_k}ds \to 0 \quad \mbox{as} \quad t\to\infty,
}
which implies $(\CeL \Sigma +D_e)\phi_k=0$. Since $k\in\Z$ was arbitrary and $\set{\phi_k}_{k\in\Z}$ is a basis of $\Wf$, we have $\CeL \Sigma +D_e=0$, and thus $\Sigma$ is  a solution of~\eqref{eq:regeqns}.
\end{proof}

The proof of Theorem~\ref{thm:ORP} shows that even in a more general situation where $S$ is a generator of a general strongly continuous semigroup $T_S(t)$ on a Banach space $W$, the regulation error has the form~\eqref{eq:ORPerrformula}
whenever 
$\Sigma S=\Aemo \Sigma + B_e$ has a solution $\Sigma\in \Lin(W,X_e)$ satisfying $\ran(\Sigma)\subset \Dom(\CeL)$.
In this situation the regulator equations are sufficient for the solution of the robust output regulation problem. 
The regulator equations are also necessary whenever $T_S(t)$ satisfies a condition of the form
\eq{
 \int_t^{t+1} \norm{QT_S(s)v_0}ds \stackrel{t\to\infty}{\longrightarrow} 0 \qquad \Rightarrow  \qquad Qv_0=0
}
for any $Q\in \Lin(W,Y)$ and $v_0\in W$. This is in particular true for all finite-dimensional exosystems
with $\gs(S)\subset \overline{\C_+}$ as well as for infinite blockdiagonal exosystems.

If $B_d$ and $\mc{G}_2$ are bounded, then $\Sigma$ in~\eqref{eq:ORPSyl} satisfies $\Sigma(\Dom(S))\subset \Dom(A_e)$.
In this situation~\eqref{eq:ORPerrformula} implies
that if~\eqref{eq:regeqns} are satisfied, then
for all $x_{e0}\in \Dom(A_e)$ and $v_0\in \Dom(S)$ the regulation error decays to zero pointwise, i.e., $\norm{e(t)}\to 0$ as $t\to\infty$.

The following lemma presents a sufficient condition for the solvability of~\eqref{eq:ORPSyl}.

\begin{lem}
  \label{lem:Sylsolvabilitycond}
  Assume the closed-loop system is strongly stable,
  $\set{i\gw_k}_{k\in\Z}\subset \rho(A_e)$, and
  \ieq{
  (\RBm)_{k\in\Z}\in \lp[2](X_e).
  }
Then the Sylvester equation $\Sigma S = \Aemo \Sigma + B_e$ on $\Dom(S)$ has a unique solution $\Sigma\in \Lin(W,X_e)$ 
   satisfying $\ran(\Sigma) \subset \Dom(\CeL)$.
\end{lem}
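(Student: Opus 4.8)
The plan is to build $\Sigma$ explicitly on the orthonormal basis $\set{\phi_k}_{k\in\Z}$ and to extend it by $\lp[2]$-summation. Since $i\gw_k\in\rho(A_e)$, I set $\Sigma\phi_k:=\RBm\in X_e$; the resolvent identity $(i\gw_k-\Aemo)\RBm=B_e\phi_k$ then yields the pointwise Sylvester relation $\Aemo\Sigma\phi_k=i\gw_k\Sigma\phi_k-B_e\phi_k=\Sigma S\phi_k-B_e\phi_k$. For $v\in W$ I define $\Sigma v:=\sum_{k\in\Z}\iprod{v}{\phi_k}\Sigma\phi_k$. The hypothesis $(\RBm)_{k\in\Z}\in\lp[2](X_e)$ and the Cauchy--Schwarz inequality show that this series converges absolutely in $X_e$ with $\norm{\Sigma v}\le\norm{v}_W\,\norm{(\RBm)_k}_{\lp[2](X_e)}$, so $\Sigma\in\Lin(W,X_e)$.

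To verify the Sylvester equation on $\Dom(S)$, fix $v\in\Dom(S)$, so that $(\gw_k\iprod{v}{\phi_k})_k\in\lp[2]$. Writing $\Sigma_n v:=\sum_{\abs{k}\le n}\iprod{v}{\phi_k}\Sigma\phi_k$, the same Cauchy--Schwarz estimate gives $\Sigma_n v\to\Sigma v$ and $\sum_{\abs{k}\le n}i\gw_k\iprod{v}{\phi_k}\Sigma\phi_k\to\Sigma Sv$ in $X_e$, while $\sum_{\abs{k}\le n}\iprod{v}{\phi_k}B_e\phi_k\to B_e v$ in the extrapolation space by boundedness of $B_e$. Because the extension $\Aemo$ is continuous from $X_e$ into the extrapolation space, applying it to $\Sigma_n v$ and using the pointwise relation gives $\Aemo\Sigma_n v=\sum_{\abs{k}\le n}\iprod{v}{\phi_k}(i\gw_k\Sigma\phi_k-B_e\phi_k)\to\Sigma Sv-B_e v$; on the other hand $\Aemo\Sigma_n v\to\Aemo\Sigma v$ by continuity, so $\Aemo\Sigma v=\Sigma Sv-B_e v$, which is exactly $\Sigma S=\Aemo\Sigma+B_e$ on $\Dom(S)$.

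The main obstacle is $\ran(\Sigma)\subset\Dom(\CeL)$. Writing $B_e\phi_k=B_e^0\pmat{E\phi_k\\F\phi_k}$ and using that the closed-loop system is regular with $i\gw_k\in\rho(A_e)$, Theorem~\ref{thm:CLregularity} together with the definition of regularity gives $\ran(\RBmz)\subset\Dom(\CeL)$, so each $\Sigma\phi_k\in\Dom(\CeL)$; I denote $y_k:=\CeL\Sigma\phi_k=\CeL\RBmz\pmat{E\phi_k\\F\phi_k}\in Y$. The difficulty is that $\Dom(\CeL)$ is not closed in $X_e$, so domain membership does not transfer to the infinite sum for free. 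I would instead use the defining limit $\CeL x=\lim_{\gl\to\infty}\gl C_e R(\gl,A_e)x$: for each $\gl$ the operator $\gl C_e R(\gl,A_e)$ is bounded on $X_e$, hence $\gl C_e R(\gl,A_e)\Sigma v=\sum_k\iprod{v}{\phi_k}\,\gl C_e R(\gl,A_e)\Sigma\phi_k$, and the task reduces to interchanging $\lim_{\gl\to\infty}$ with the sum so as to obtain $\sum_k\iprod{v}{\phi_k}y_k$. This interchange is the crux and is where I expect the real work: it calls for a bound on $\norm{\gl C_e R(\gl,A_e)\Sigma\phi_k}$ that is uniform in $\gl$ and summable against $(\iprod{v}{\phi_k})_k$, which I would extract from the admissibility of $C_e$, the uniform gap of $\set{\gw_k}$, and the $\lp[2]$ hypothesis; the same ingredients should yield $(y_k)_k\in\lp[2](Y)$, so that $\sum_k\iprod{v}{\phi_k}y_k$ converges and defines $\CeL\Sigma\in\Lin(W,Y)$.

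Uniqueness is then immediate. If $\Delta\in\Lin(W,X_e)$ satisfies the homogeneous equation $\Delta S=\Aemo\Delta$ with $\ran(\Delta)\subset\Dom(\CeL)$, evaluating at $\phi_k$ gives $(i\gw_k-\Aemo)\Delta\phi_k=0$ in the extrapolation space; applying the bounded operator $R(i\gw_k,\Aemo)$ yields $\Delta\phi_k=0$ for every $k$, and since $\Span\set{\phi_k}$ is dense in $W$ and $\Delta$ is bounded, $\Delta=0$.
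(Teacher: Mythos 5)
Your construction of $\Sigma$, the Cauchy--Schwarz estimate giving $\Sigma\in\Lin(W,X_e)$, the verification of $\Sigma S=\Aemo\Sigma+B_e$ on $\Dom(S)$ by passing to the limit in the partial sums (using $\Aemo\in\Lin(X_e,X_{e,-1})$ and the admissibility of $B_e$ guaranteed by Theorem~\ref{thm:CLregularity}), and the uniqueness argument are all correct; this part is in fact more self-contained than the paper's own proof, which obtains the Sylvester property by citing~\cite[Lem.~6]{HamPoh10} and does not spell out uniqueness.

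However, the range inclusion $\ran(\Sigma)\subset\Dom(\CeL)$ --- which is the real content of the lemma and the only place where the Hilbert--Schmidt assumptions on $E$ and $F$ enter --- is left unproved: you reduce it to interchanging $\lim_{\gl\to\infty}$ with the sum and defer exactly the bound that constitutes the difficulty. Moreover, the ingredients you propose for closing this gap would not suffice. Admissibility of $C_e$ for the bounded semigroup $T_e(t)$ only yields $\norm{C_eR(\gl,A_e)}\lesssim \gl^{-1/2}$, so treating $\Sigma\phi_k$ as a generic element of $X_e$ gives at best $\norm{\gl C_eR(\gl,A_e)\Sigma\phi_k}\lesssim\sqrt{\gl}\,\norm{\Sigma\phi_k}$, which is not uniform in $\gl$; and the uniform gap of $\set{\gw_k}_{k\in\Z}$ plays no role here. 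What is actually needed is the special form of $\Sigma\phi_k$ combined with the resolvent identity: for $\gl>0$,
\eq{
\gl C_eR(\gl,A_e)\Sigma\phi_k
= \frac{\gl}{\gl-i\gw_k}\,\CeL R(i\gw_k,\Aemo)B_e\phi_k
- \frac{\gl}{\gl-i\gw_k}\,\CeL R(\gl,A_e)B_e^0\left[ E\phi_k \atop F\phi_k \right].
}
Summing over $k$, the first term is handled by dominated convergence, the required $\lp[1](Y)$-bound on $\bigl(\iprod{v}{\phi_k}\CeL R(i\gw_k,\Aemo)B_e\phi_k\bigr)_{k\in\Z}$ coming from a second application of the resolvent identity through the points $1+i\gw_k$ (boundedness of the closed-loop transfer function on the line $\re\gl=1$, uniform boundedness of $\CeL R(1+i\gw_k,A_e)$ by admissibility, and your $\lp[2]$ hypothesis); the second term is $\CeL R(\gl,A_e)B_e^0$ applied to a vector of $U_e$ which stays bounded as $\gl\to\infty$ precisely because $E$ and $F$ are Hilbert--Schmidt, and this term vanishes in the limit precisely because $(A_e,B_e^0,C_e)$ is regular (Theorem~\ref{thm:CLregularity}). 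The factorization $B_e=B_e^0\left[E\atop F\right]$, the regularity of $(A_e,B_e^0,C_e)$, and the Hilbert--Schmidt property of $E$ and $F$ are the missing ideas; without them the interchange of limits on which your argument rests cannot be justified.
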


\begin{proof}
  Define  $\Sigma : \Dom(\Sigma)\subset W\to X_e$ by
  \ieq{
  \Sigma v = \sum_{k\in\Z} \iprod{v}{\phi_k} \RBm 
  }
for all $ v\in \Dom(\Sigma)$.
By~\cite[Lem. 6]{HamPoh10} the operator $\Sigma\in \Lin(W,X_{e,-1})$ is the solution of the Sylvester equation $\Sigma S = \Aemo \Sigma + B_e$,
and 
$(\RBm)_{k\in\Z}\in \lp[2](X_e)$
clearly implies $\Sigma \in \Lin(W,X_e)$.
To show   $\ran(\Sigma)\subset \Dom(\CeL )$, we note that the resolvent identity $R(i\gw_k,\Aemo )=R(1+i\gw_k,\Aemo )+R(1+i\gw_k,A_e)R(i\gw_k,\Aemo )$ implies
  $(\iprod{v}{\phi_k}\CeL R(i\gw_k,\Aemo )B_e\phi_k)_{k\in\Z}\in \lp[1](Y)$.
  For all $v\in W$ and $\gl>0$  we have
  \eq{
  \MoveEqLeft[.1] \gl C_e R(\gl,\Aemo ) \Sigma v_0
  = \sum_{k\in\Z} \iprod{v_0}{\phi_k} \gl C_eR(\gl,A_e)R(i\gw_k,\Aemo )B_e\phi_k
  \\
  &
  = \sum_{k\in\Z}  \frac{\gl\iprod{v_0}{\phi_k}}{\gl-i\gw_k} \CeL R(i\gw_k,\Aemo )B_e\phi_k 
  - \CeL R(\gl,A_e)B_e^0 \sum_{k\in\Z}  \frac{\gl\iprod{v_0}{\phi_k}}{\gl-i\gw_k} \left[ E\phi_k \atop F\phi_k \right]
  \\
  &\quad \longrightarrow \sum_{k\in\Z} \iprod{v_0}{\phi_k}  \CeL R(i\gw_k,\Aemo )B_e\phi_k   
  }
  as $\gl\to\infty$ since $(A_e,B_e^0,C_e)$ is regular and since $E$ and $F$ are Hilbert--Schmidt.  Thus $\Sigma v\in \Dom(\CeL)$ by definition.
\end{proof}

\subsection{The Internal Model Principle}

We conclude this section by presenting the internal model principle that characterizes the controllers solving the robust output regulation problem.  We use the following two alternate definitions for an internal model. 

\begin{dfn}
  \label{def:Gconds}
  A controller\/ $(\mc{G}_1,\mc{G}_2,K)$ is said to satisfy the\/ {\em\Gconds} 
  if%
\begin{subequations}%
  \label{eq:Gconds}%
\eqn{
\ran(i\gw_k-\Gmo)\cap\ran(\mc{G}_2)&=\set{0} ~\quad\qquad\qquad \forall k\in\Z , \label{eq:Gconds1} 
\\
\ker(\mc{G}_2)&=\set{0}. \label{eq:Gconds2}
}
\end{subequations}%
\end{dfn}

\begin{dfn}
  \label{def:pcopy}
Assume $\dim Y<\infty$. A controller~$(\mc{G}_1,\mc{G}_2,K)$ is said to\/ {\em incorporate a p-copy internal model} of the exosystem $S$ if for all $k\in\Z$ we 
have
\ieq{
\dim\ker(i\gw_k-\mc{G}_1)\geq\dim\, Y.
}
\end{dfn}

Our first result characterizes the robustness of a controller with respect to \keyterm{individual perturbations} $\PARsysopspert\in\Ops$.
The internal model principle is presented in Theorem~\ref{thm:IMP}.

\begin{thm}
  \label{thm:RORPchar}
  Assume the controller solves the output regulation problem.
Let
$(\tilde{A},\tilde{B},\tilde{B}_d,$ $\tilde{C},\tilde{D},\tilde{E},\tilde{F})\in \Ops$ 
%$\PARsysopspert\in \Ops$ 
be such that the perturbed closed-loop system is strongly stable, $\set{i\gw_k}_{k\in\Z}\subset\rho(\tilde{A})$, and 
$\tilde{\Sigma} S = \Aetmo  \tilde{\Sigma} + \tilde{B}_e$ has a solution.
  Then the controller is robust with respect to 
$\PARsysopspert$
  if and only if for every $k\in\Z$ the equations%
\begin{subequations}%
    \label{eq:RORPchareqns}
    \eqn{
    \tilde{P}(i\gw_k)\KL z_k &= 
-\tilde{P}_d(i\gw_k)\tilde{E}\phi_k 
    - \tilde{F}\phi_k\\
    (i\gw_k-\mc{G}_1)z_k &= 0
    }
  \end{subequations}%
  have  solutions $z_k\in \Dom(\mc{G}_1)$.
  If~\eqref{eq:RORPchareqns} have a solution $z_k\in \Dom(\mc{G}_1)$, then it is unique. 
\end{thm}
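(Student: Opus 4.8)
The plan is to characterize robustness via the error formula~\eqref{eq:ORPerrformula} derived in the proof of Theorem~\ref{thm:ORP}, specialized to the perturbed closed-loop system. By Theorem~\ref{thm:ORP} (applied to the perturbed system, which is strongly stable by hypothesis and for which a solution $\tilde{\Sigma}$ of the perturbed Sylvester equation $\tilde{\Sigma}S = \Aetmo\tilde{\Sigma}+\tilde{B}_e$ exists), the controller is robust with respect to $\PARsysopspert$ if and only if the perturbed regulator equations are solvable, i.e.\ the regulation constraint $\CeLt\tilde{\Sigma}+\tilde{D}_e = 0$ holds, where $\tilde{\Sigma}$ is necessarily the solution given (as in Lemma~\ref{lem:Sylsolvabilitycond}) by $\tilde{\Sigma}\phi_k = \RBm[k]$ with $A_e,B_e$ replaced by their tilded versions. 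So the task reduces to translating the single operator equation $\CeLt\tilde{\Sigma}+\tilde{F}=0$ into the frequency-by-frequency conditions~\eqref{eq:RORPchareqns}.

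First I would evaluate $\tilde{\Sigma}$ and the constraint on each basis vector $\phi_k$. Since $\set{i\gw_k}\subset\rho(\tilde{A})$, Lemma~\ref{lem:Sylsolvabilitycond} (or a direct computation) gives $\tilde{\Sigma}\phi_k = R(i\gw_k,\Aetmo)\tilde{B}_e\phi_k$, so the constraint becomes, for every $k$,
\[
\CeLt R(i\gw_k,\Aetmo)\tilde{B}_e\phi_k + \tilde{F}\phi_k = 0.
\]
The heart of the argument is to unpack this $2\times 2$ block resolvent. Writing $z_k$ for the controller-component of $\tilde{\Sigma}\phi_k$ and using the block structure of $\Aetmo$ and $\tilde{B}_e = (\tilde{B}_d\tilde{E}\phi_k,\,\mc{G}_2\tilde{F}\phi_k)^T$, the second block row of $(i\gw_k - \Aetmo)\tilde{\Sigma}\phi_k = \tilde{B}_e\phi_k$ reads
\[
(i\gw_k-\mc{G}_1)z_k - \mc{G}_2\tilde{C}_\Lambda x_k - \mc{G}_2\tilde{D}\KL z_k = \mc{G}_2\tilde{F}\phi_k,
\]
where $x_k$ is the plant-component. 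Grouping the terms containing $\mc{G}_2$ and invoking the \Gconds{} from Definition~\ref{def:Gconds}---specifically \eqref{eq:Gconds1}, which forces $\ran(i\gw_k-\mc{G}_1)\cap\ran(\mc{G}_2)=\set{0}$, so the $\mc{G}_2$-terms and the $(i\gw_k-\mc{G}_1)z_k$ term must each vanish, and \eqref{eq:Gconds2}, which lets me cancel the injective $\mc{G}_2$---yields both $(i\gw_k-\mc{G}_1)z_k=0$ and $\tilde{C}_\Lambda x_k + \tilde{D}\KL z_k + \tilde{F}\phi_k = 0$. The first block row, meanwhile, expresses $x_k$ via the plant resolvent so that $\tilde{C}_\Lambda x_k + \tilde{D}\KL z_k = \tilde{P}(i\gw_k)\KL z_k + \tilde{P}_d(i\gw_k)\tilde{E}\phi_k$, after substituting the input $\KL z_k$ and the disturbance $\tilde{E}\phi_k$ into the transfer-function expressions. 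Combining these gives exactly~\eqref{eq:RORPchareqns}.

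\textbf{The main obstacle} I expect is the careful handling of the \emph{unbounded} operators $\mc{G}_2$, $B_d$, $\tilde{B}_d$ and the $\Lambda$-extensions when passing from the formal block computation to rigorous statements on the correct domains---the equation $(i\gw_k-\Aetmo)\tilde{\Sigma}\phi_k = \tilde{B}_e\phi_k$ lives in the extrapolation space $\tilde{X}_{-1}\times Z_{-1}$, and one must verify that $x_k\in\Dom(\tilde{C}_\Lambda)$ and $z_k\in\Dom(\KL)$ so that the transfer functions $\tilde{P}(i\gw_k)$, $\tilde{P}_d(i\gw_k)$ and the $\Lambda$-extensions are all legitimately applied. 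This is where regularity of the perturbed closed-loop system (Theorem~\ref{thm:CLregularity}) and the containments $\ran(\tilde{\Sigma})\subset\Dom(\CeLt)$, $\ZG\subset\Dom(\KL)$ must be used to justify that each limit defining a $\Lambda$-extension exists and the block manipulations are valid. Finally, for the uniqueness claim, I would note that if $z_k,z_k'$ are two solutions then their difference $d_k$ satisfies $(i\gw_k-\mc{G}_1)d_k=0$ and $\tilde{P}(i\gw_k)\KL d_k=0$; reconstructing the associated closed-loop eigenvector and using $i\gw_k\in\rho(\Aetmo)$ (equivalently, the injectivity forced by strong stability together with the \Gconds) forces $d_k=0$.
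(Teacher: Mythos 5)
There is a genuine gap at the heart of your block-row computation: you invoke the \Gconds\ of Definition~\ref{def:Gconds}, but these are \emph{not} among the hypotheses of Theorem~\ref{thm:RORPchar}. The theorem is a characterization of robustness with respect to a single perturbation that must hold for an \emph{arbitrary} controller solving the output regulation problem; its role in the paper is precisely to prove the \emph{necessity} part of the internal model principle (Theorem~\ref{thm:IMP}, parts (a)--(b)), i.e.\ that a controller which is robust must satisfy the \Gconds. Assuming the \Gconds\ in the proof therefore makes that application circular. Worse, under the \Gconds\ both sides of the claimed equivalence become automatically true: exactly as in the proof of Theorem~\ref{thm:IMP}, condition~\eqref{eq:Gconds1} forces both sides of the second block row $(i\gw_k-\Gmo)\tilde{\Gamma}\phi_k=\mc{G}_2(\CLt\tilde{\Pi}\phi_k+\tilde{D}\KL\tilde{\Gamma}\phi_k+\tilde{F}\phi_k)$ to vanish separately, and~\eqref{eq:Gconds2} then gives the regulation constraint, so both robustness and the solvability of~\eqref{eq:RORPchareqns} hold for \emph{every} admissible perturbation. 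Your argument thus verifies the equivalence only in the degenerate case where it carries no information, and it cannot deliver the ``only if'' direction for a controller that has no internal model, which is the actual content of the statement.

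The repair is that the \Gconds\ are never needed, and neither is $i\gw_k\in\rho(\tilde{A}_e)$ (also not assumed, so you may not write $\tilde{\Sigma}\phi_k=R(i\gw_k,\Aetmo)\tilde{B}_e\phi_k$; you must work with the Sylvester equation $(i\gw_k-\Aetmo)\tilde{\Sigma}\phi_k=\tilde{B}_e\phi_k$ directly). For ``robust $\Rightarrow$ solvable'': by Theorem~\ref{thm:ORP} applied to the perturbed system, robustness gives the regulation constraint $\CLt\tilde{\Pi}\phi_k+\tilde{D}\KL\tilde{\Gamma}\phi_k+\tilde{F}\phi_k=0$; substituting this into the second block row makes its right-hand side $\mc{G}_2(\cdot)$ vanish \emph{identically}, yielding $(i\gw_k-\Gmo)\tilde{\Gamma}\phi_k=0$ with no intersection-of-ranges argument, and the first block row together with $i\gw_k\in\rho(\tilde{A})$ converts the constraint into the transfer-function equation with $z_k=\tilde{\Gamma}\phi_k$. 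For the converse and for uniqueness, your closing eigenvector idea is the right mechanism, but what strong stability of the perturbed closed-loop semigroup actually provides is only $i\gw_k\notin\gs_p(\tilde{A}_e)$, and that injectivity suffices: if $z_k$ solves~\eqref{eq:RORPchareqns}, set $d_k=\tilde{\Gamma}\phi_k-z_k$ and $\xi_k=R(i\gw_k,\Atmo)\tilde{B}\KL d_k$; using Theorem~\ref{thm:CLregularity} one checks $(\xi_k,d_k)^T\in\Dom(\tilde{A}_e)$ and $\tilde{A}_e(\xi_k,d_k)^T=i\gw_k(\xi_k,d_k)^T$, hence $(\xi_k,d_k)=0$, which gives $\tilde{\Gamma}\phi_k=z_k$, the regulation constraint, robustness via Theorem~\ref{thm:ORP}, and uniqueness in one stroke. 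This, in essence, is the argument of \cite[Thm.~5.1]{PauPoh14a} that the paper's one-line proof cites (with $E$ replaced by $B_dE$ and the resolvent identity of Theorem~\ref{thm:CLregularity}).
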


\begin{proof}
  The proof can be completed exactly as the proof of~\cite[Thm. 5.1]{PauPoh14a} (where we replace $E$ by $B_dE$) since
  $R(i\gw_k,\Aetmo )\tilde{B}_e\phi_k= R(i\gw_k,\tilde{A}_e^e)\tilde{B}_e\phi_k$
  by Theorem~\ref{thm:CLregularity}.
\end{proof}

\begin{thm}
  \label{thm:IMP}
  Assume
$\PARcontr$
  stabilizes the closed-loop system strongly in such a way that 
  $\set{i\gw_k}_{k\in\Z}\subset \rho(A_e)$ and 
  \ieq{
  \left( \RBm \right)_{k\in\Z}\in \lp[2](X_e).
  }
If the controller satisfies the \Gconds\ in Definition~\textup{\ref{def:Gconds}}, then it solves the robust output regulation problem. 
The controller is guaranteed to be robust
with respect to all perturbations for which the perturbed closed-loop system is stable, $\set{i\gw_k}_{k\in\Z}\subset \rho(\tilde{A}_e)$ and 
$\Sigma S=\tilde{A}_e \Sigma + \tilde{B}_e$ has a solution $\Sigma \in \Lin(W,X_e)$ satisfying $\ran(\Sigma)\subset \Dom(\CeLt)$.

Moreover, if $\set{i\gw_k}_{k\in\Z}\subset\rho(A)$,
then 
the following hold. 
\begin{itemize}
  \item[\textup{(a)}] The controller solves the robust output regulation problem if and only if it satisfies the \Gconds.
    \item[\textup{(b)}] If $\dim Y=p<\infty$, the controller solves the robust output regulation problem if and only if it incorporates a p-copy internal model of the exosystem, i.e., $\dim \ker(i\gw_k-\mc{G}_1)\geq p$ for all $k\in\Z$.
\end{itemize}
In both cases the controllers are guaranteed to be robust with respect to perturbations for which the perturbed closed-loop system is stable, $\set{i\gw_k}_{k\in\Z}\subset \rho(\tilde{A}) \cap  \rho(\tilde{A}_e)$ and 
$\Sigma S=\tilde{A}_e \Sigma + \tilde{B}_e$ has a solution $\Sigma \in \Lin(W,X_e)$ satisfying $\ran(\Sigma)\subset \Dom(\CeLt)$.

\end{thm}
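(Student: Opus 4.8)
The plan is to establish the internal model principle (Theorem~\ref{thm:IMP}) by leveraging the characterization of robustness already obtained in Theorem~\ref{thm:RORPchar}. The key observation is that Theorem~\ref{thm:RORPchar} reduces robustness with respect to a single perturbation to the solvability of the equations~\eqref{eq:RORPchareqns} for that perturbation. Thus the strategy is to show that the \Gconds\ (or equivalently, in the resolvent-set case, the $p$-copy property) are exactly what guarantees these equations are solvable for \emph{every} admissible perturbation keeping the closed loop stable. First I would invoke Lemma~\ref{lem:Sylsolvabilitycond} together with the $\lp[2]$-summability hypothesis to ensure that both the nominal and the perturbed Sylvester equations have solutions $\Sigma$ with $\ran(\Sigma)\subset\Dom(\CeL)$, so that Theorem~\ref{thm:ORP} applies and the controller solves the (non-robust) output regulation problem; this handles the standing hypotheses needed before robustness can even be discussed.

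\textbf{Sufficiency of the \Gconds.} For the first assertion, I would take an arbitrary perturbation in the stated class and verify the solvability of~\eqref{eq:RORPchareqns}. The second equation $(i\gw_k-\mc{G}_1)z_k=0$ requires $z_k\in\ker(i\gw_k-\Gmo)$, while the first prescribes the output $\KL z_k$. The heart of the matter is to show that for any target vector on the right-hand side there exists such a $z_k$ producing the correct control action. Here the \Gconds\ enter: condition~\eqref{eq:Gconds1}, $\ran(i\gw_k-\Gmo)\cap\ran(\mc{G}_2)=\set{0}$, combined with the block structure of $A_e$ and the regulator equation $\CeL\Sigma+D_e=0$, forces the relevant component of $\Sigma\phi_k$ to lie in $\ker(i\gw_k-\Gmo)$ and to realize the required value of $\KL$ applied to it. I expect the computation to proceed by evaluating the nominal regulator equations componentwise at each basis vector $\phi_k$, extracting from the second block row the identity $(i\gw_k-\Gmo)\Sigma_2\phi_k=\mc{G}_2(\text{error term})$, and then using~\eqref{eq:Gconds1} to conclude both sides vanish, which pins down $z_k=\Sigma_2\phi_k$ and, via the first block row and the regulation constraint, gives the transfer-function identity in~\eqref{eq:RORPchareqns}. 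Injectivity~\eqref{eq:Gconds2} of $\mc{G}_2$ then yields uniqueness.

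\textbf{Necessity in the resolvent-set case.} For parts~(a) and~(b), under the additional assumption $\set{i\gw_k}_{k\in\Z}\subset\rho(A)$ I would argue that robustness \emph{forces} the \Gconds. The standard device is to exhibit, for a controller \emph{failing} the \Gconds, a specific perturbation of the plant that keeps the closed loop stable yet destroys regulation, i.e.\ for which~\eqref{eq:RORPchareqns} becomes unsolvable. Since $\set{i\gw_k}\subset\rho(A)$ the plant transfer function $P(i\gw_k)$ is well defined and (by Assumption~\ref{ass:PKsurj}) surjective, so one has full freedom to realize mismatched right-hand sides by a small perturbation of $D$ or $E$, $F$; this is the classical Francis--Wonham argument adapted to the present operator-theoretic setting. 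The equivalence of the \Gconds\ with the $p$-copy condition when $\dim Y=p<\infty$ is then a linear-algebraic restatement, following the correspondence established in the finite-dimensional output case of~\cite{PauPoh14a}.

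\textbf{The main obstacle.} I expect the principal difficulty to be not the algebra of the \Gconds\ itself but the technical bookkeeping forced by the \emph{unbounded} operators $B$, $B_d$, $C$ and the possibly unbounded $\mc{G}_2$, $K$: one must work in the extrapolation spaces $X_{-1}\times Z_{-1}$ and repeatedly justify that the formal manipulations of the regulator equations remain valid on the correct domains, that $\ran(\Sigma)\subset\Dom(\CeL)$ is preserved under perturbation, and that the identity $R(i\gw_k,\Aetmo)\tilde{B}_e\phi_k=R(i\gw_k,\tilde{A}_e^e)\tilde{B}_e\phi_k$ from Theorem~\ref{thm:CLregularity} can be used to transfer the nominal argument to the perturbed system. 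In effect the entire novelty over the bounded-operator case~\cite{PauPoh14a} lies in controlling these domain and regularity issues, and I would lean heavily on Theorem~\ref{thm:CLregularity} and Lemma~\ref{lem:Sylsolvabilitycond} precisely to discharge them, so that the structural part of the proof can quote~\cite[Thm.~5.1]{PauPoh14a} essentially verbatim.
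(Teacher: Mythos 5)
Your core computation---evaluating the perturbed Sylvester equation componentwise at $\phi_k$, reading off from the second block row the identity $(i\gw_k-\Gmo)\Gamma\phi_k=\mc{G}_2(\CLt\Pi\phi_k+\tilde{D}\KL\Gamma\phi_k+\tilde{F}\phi_k)$, and using~\eqref{eq:Gconds1} to force both sides to vanish---is exactly the paper's key step, and your deferral to~\cite{PauPoh14a} for the necessity parts (a) and (b) also matches the paper. The genuine gap lies in the logical architecture you build around this computation: you route sufficiency through Theorem~\ref{thm:RORPchar} and the solvability of~\eqref{eq:RORPchareqns}. Both of these are only meaningful when $\set{i\gw_k}_{k\in\Z}\subset\rho(\tilde{A})$ (the equations involve $\tilde{P}(i\gw_k)$ and $\tilde{P}_d(i\gw_k)$), and Theorem~\ref{thm:RORPchar} moreover presupposes that the controller already solves the nominal output regulation problem. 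The first claim of Theorem~\ref{thm:IMP}---the one stated before ``Moreover''---deliberately avoids these hypotheses: its perturbation class requires only stability of the perturbed closed loop, $\set{i\gw_k}_{k\in\Z}\subset\rho(\tilde{A}_e)$, and solvability of the perturbed Sylvester equation, and the nominal plant itself is allowed to have $i\gw_k\in\gs(A)$. This generality is not cosmetic: in the paper's heat-equation example $0\in\gs_p(A)$, and Assumption~\ref{ass:PKsurj} is formulated precisely to cover that case. Your transfer-function route therefore cannot prove the first claim in its stated generality.

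The paper's proof avoids the detour entirely: after Lemma~\ref{lem:Sylsolvabilitycond} provides the nominal Sylvester solution, it takes an arbitrary admissible perturbation, uses Theorem~\ref{thm:CLregularity} to place $(\Pi\phi_k,\Gamma\phi_k)^T$ in $\XBBdt\times\ZG$, applies~\eqref{eq:Gconds1} to conclude that $(i\gw_k-\mc{G}_1)\Gamma\phi_k=0$ and $\mc{G}_2(\CLt\Pi\phi_k+\tilde{D}\KL\Gamma\phi_k+\tilde{F}\phi_k)=0$, invokes the injectivity condition~\eqref{eq:Gconds2} to obtain $\CeLt\Sigma\phi_k+\tilde{D}_e\phi_k=0$ for every $k$, and then concludes by the sufficiency direction of Theorem~\ref{thm:ORP}---no transfer functions and no condition on $\rho(\tilde{A})$ are needed. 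Note also a circularity in your treatment of the nominal case: you claim that Lemma~\ref{lem:Sylsolvabilitycond} plus Theorem~\ref{thm:ORP} already yield that the controller solves the output regulation problem, but Theorem~\ref{thm:ORP} requires a solution of the \emph{full} regulator equations, including the constraint $\CeL\Sigma+D_e=0$, which is exactly what the \Gconds\ computation must produce (and which you at one point cite as an input, ``combined with \ldots\ the regulator equation $\CeL\Sigma+D_e=0$''). Once $\set{i\gw_k}_{k\in\Z}\subset\rho(A)\cap\rho(\tilde{A})$ is assumed, your route---in effect Lemma~\ref{lem:RBeformgeneral} followed by Theorem~\ref{thm:RORPchar}---is sound, and it is what the paper itself does for parts (a) and (b); but for the general robustness claim you need the paper's direct verification of the regulation constraint.
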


\begin{proof}
Since 
$\left( \RBm \right)_{k\in\Z}\in \lp[2](X_e)$, 
we have from Lemma~\ref{lem:Sylsolvabilitycond} that $\Sigma S = \Aemo \Sigma + B_e$ has a solution $\Sigma\in \Lin(W,X_e)$ such that $\ran(\Sigma) \subset \Dom(\CeL)$.
Let $\PARsysopspert\in \Ops$
be such that
$\set{i\gw_k}_{k\in\Z}\subset \rho(\tilde{A}_e)$ and 
$\Sigma S=\tilde{A}_e \Sigma + \tilde{B}_e$ has a solution $\Sigma = (\Pi,\Gamma)^T \in \Lin(W,X_e)$ satisfying $\ran(\Sigma)\subset \Dom(\CeLt)$.
If $k\in\Z$, then $\Sigma S \phi_k= \tilde{A}_e \Sigma \phi_k+ \tilde{B}_e\phi_k$ implies $(i\gw_k- \tilde{A}_e)\Sigma \phi_k = \tilde{B}_e\phi_k$. By Lemma~\ref{thm:CLregularity} we have $(\Pi\phi_k,\Gamma\phi_k)^T\in \XBBdt\times \ZG$ and
\eq{
\pmat{(i\gw_k-\tilde{A})\Pi\phi_k - \tilde{B}\KL \Gamma\phi_k \\-\mc{G}_2\CLt\Pi \phi_k + (i\gw_k-\mc{G}_1)\Gamma\phi_k - \mc{G}_2\tilde{D}\KL\Gamma \phi_k} = \pmat{\tilde{B}_d\tilde{E}\phi_k\\ \mc{G}_2\tilde{F}\phi_k}.
}
The second line implies $(i\gw_k-\mc{G}_1)\Gamma\phi_k = \mc{G}_2(\CLt\Pi\phi_k + \tilde{D}\KL\Gamma \phi_k + \tilde{F}\phi_k )$, and the \Gconds\ further imply
$\CeLt \Sigma \phi_k + \tilde{D}_e\phi_k =  \CLt\Pi\phi_k + \tilde{D}\KL\Gamma \phi_k + \tilde{F}\phi_k =0 $.
Since $k\in\Z$ was arbitrary and $\set{\phi_k}_{k\in\Z}$ is a basis of $W$, we have that $\CeLt \Sigma  + \tilde{D}_e=0$.
Since the perturbations in $\Ops$ were arbitrary, we have from
 Theorem~\ref{thm:ORP} that the regulation errors for the nominal and the perturbed systems satisfy $\int_t^{t+1}\norm{e(s)}ds\to 0$ as $t\to\infty$. 

 Under the assumptions $\set{i\gw_k}_{k\in\Z}\subset \rho(A)$ and $\set{i\gw_k}_{k\in\Z}\subset \rho(\tilde{A})$, the rest of the theorem can be proved using  Theorem~\ref{thm:RORPchar} exactly as in~\cite{PauPoh14a}.  
\end{proof}

\begin{lem}
  \label{lem:Gcondsinvariance}
  If
  the operators $(\mc{G}_1,\mc{G}_2)$ satisfy the \Gconds, and if $K: \Dom(K)\subset Z\to Y$ is such that
  $\ker(i\gw_k-\mc{G}_1)\subset \ker(K)$ for all $k\in\Z$, then also 
  $\ran(i\gw_k-\Gmo +\mc{G}_2K)\cap\ran(\mc{G}_2)=\set{0}$ for all $k\in\Z$.
\end{lem}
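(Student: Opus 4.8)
The plan is to verify the trivial-intersection property directly: I would take an arbitrary element of $\ran(i\gw_k-\mc{G}_1+\mc{G}_2K)\cap\ran(\mc{G}_2)$ and show it must be zero. The guiding idea is that the extra feedback term $\mc{G}_2 K$ can be absorbed into $\ran(\mc{G}_2)$, which reduces the question back to condition~\eqref{eq:Gconds1} for the unmodified pair $(\mc{G}_1,\mc{G}_2)$. It is worth noting in advance that only~\eqref{eq:Gconds1} together with the kernel-containment hypothesis are used; the injectivity condition~\eqref{eq:Gconds2} plays no role here.

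Fix $k\in\Z$ and suppose $w\in\ran(i\gw_k-\mc{G}_1+\mc{G}_2K)\cap\ran(\mc{G}_2)$. Then there exist $z$ in the domain of $i\gw_k-\mc{G}_1+\mc{G}_2K$ and $u\in Y$ with
\[
w=(i\gw_k-\mc{G}_1)z+\mc{G}_2Kz=\mc{G}_2u .
\]
Rearranging gives $(i\gw_k-\mc{G}_1)z=\mc{G}_2(u-Kz)$, so the vector $(i\gw_k-\mc{G}_1)z$ lies simultaneously in $\ran(i\gw_k-\mc{G}_1)$ and in $\ran(\mc{G}_2)$. Condition~\eqref{eq:Gconds1} then forces $(i\gw_k-\mc{G}_1)z=0$, i.e.\ $z\in\ker(i\gw_k-\mc{G}_1)$. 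By the hypothesis $\ker(i\gw_k-\mc{G}_1)\subset\ker(K)$ we obtain $Kz=0$, whence $w=(i\gw_k-\mc{G}_1)z+\mc{G}_2Kz=0$. Since $w$ and $k$ were arbitrary, the claim follows.

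The genuinely delicate part is not the algebra but the bookkeeping of domains and scale spaces, since $\mc{G}_1$, $\mc{G}_2$ and $K$ are all unbounded and the relevant ranges live in $Z_{-1}$. I would first pin down the domain of $i\gw_k-\mc{G}_1+\mc{G}_2K$, namely those $z\in\Dom(\mc{G}_1)\cap\Dom(K)$ for which the expression is meaningful, so that the splitting $w=(i\gw_k-\mc{G}_1)z+\mc{G}_2Kz$ is a valid identity in $Z_{-1}$ with $\mc{G}_2Kz\in\ran(\mc{G}_2)$. The one point to check is that $(i\gw_k-\mc{G}_1)z$, which a priori sits in $Z$, genuinely qualifies as a common element of $\ran(i\gw_k-\mc{G}_1)$ and $\ran(\mc{G}_2)$ in the sense of~\eqref{eq:Gconds1}; this is immediate, since its equality with $\mc{G}_2(u-Kz)$ exhibits it as a member of $\ran(\mc{G}_2)$ as well. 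With the domains fixed, no limiting or $\Lambda$-extension argument is required, and the two displayed steps complete the proof.
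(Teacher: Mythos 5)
Your proof is correct and is essentially the paper's own argument: rearrange to exhibit $(i\gw_k-\Gmo)z$ as an element of $\ran(i\gw_k-\Gmo)\cap\ran(\mc{G}_2)$, invoke~\eqref{eq:Gconds1} to conclude $z\in\ker(i\gw_k-\mc{G}_1)$, then use the kernel containment to get $Kz=0$ and hence $w=0$. The only immaterial differences are cosmetic: the paper takes $z\in\Dom(K)$ directly (no intersection with $\Dom(\mc{G}_1)$ is needed, since $\Gmo$ is defined on all of $Z$ with values in $Z_{-1}$), and, as you observe, neither proof ever uses the injectivity condition~\eqref{eq:Gconds2}.
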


\begin{proof}
Let $w=(i\gw_k-\Gmo -\mc{G}_2K)z=\mc{G}_2y$ for some $k\in\Z$, $z\in \Dom(K)$ and $y\in Y$. This implies $(i\gw_k-\Gmo )z=\mc{G}_2(y+Kz)\in \ran(i\gw_k-\Gmo )\cap \ran(\mc{G}_2)$, and we thus have $z\in \ker(i\gw_k-\mc{G}_1)$. Due to our assumptions we then also have $Kz=0$ and $w=(i\gw_k-\Gmo )z=\mc{G}_2y$, which finally  imply $w=0$ due to~\eqref{eq:Gconds1}.
\end{proof}

\begin{lem}
  \label{lem:RBeformgeneral}
  Assume the controller $\PARcontr$ satisfies the \Gconds\ and let
$(\tilde{A},\tilde{B},\tilde{B}_d,$ $\tilde{C},\tilde{D},\tilde{E},\tilde{F})\in \Ops$.
%  $\PARsysopspert\in \Ops$.
  If $k\in\Z$ and $i\gw_k\in \rho(\tilde{A})\cap \rho(\tilde{A}_e)$,
  then
  \eq{
  R(i\gw_k,\Aetmo )B_e\phi_k
  = \pmat{R(i\gw_k,\Atmo )(\tilde{B}Kz_k + \tilde{B}_d\tilde{E}\phi_k)\\z_k}
}
  where $z_k\in Z$ is the unique element such that $z_k\in \ker(i\gw_k-\mc{G}_1)$ and 
\ieq{
\tilde{P}(i\gw_k)Kz_k = -\tilde{P}_d(i\gw_k)\tilde{E}\phi_k - \tilde{F}\phi_k.
}
\end{lem}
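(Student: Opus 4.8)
The plan is to compute the vector $R(i\gw_k,\Aetmo)\tilde{B}_e\phi_k$ directly, by naming it $(x_k,z_k)^T$ and exploiting the block structure of $\tilde{A}_e$ together with the \Gconds. First I would record the domain information needed to make the computation legitimate. Since the perturbed plant lies in $\Ops$ it is regular, so Theorem~\ref{thm:CLregularity} applies to it and gives $R(i\gw_k,\Aetmo)\tilde{B}_e^0 = R(i\gw_k,\tilde{A}_e^e)\tilde{B}_e^0$. Because $\tilde{B}_e = \tilde{B}_e^0 (\tilde{E},\tilde{F})^T$, it follows that $(x_k,z_k)^T := R(i\gw_k,\Aetmo)\tilde{B}_e\phi_k$ lies in $\Dom(\tilde{A}_e^e)=\XBBdt\times\ZG$. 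In particular $x_k\in\XBBdt\subset\Dom(\tilde{C}_\Lambda)$ and $z_k\in\ZG\subset\Dom(\KL)$, so every $\Lambda$-extension appearing below is well defined. This is the step that requires genuine care; the remainder is algebra.

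Next I would rewrite the defining relation as $(i\gw_k-\tilde{A}_e)(x_k,z_k)^T=\tilde{B}_e\phi_k$ and expand the two block rows. The top row reads $(i\gw_k-\Atmo)x_k = \tilde{B}\KL z_k + \tilde{B}_d\tilde{E}\phi_k$, and since $i\gw_k\in\rho(\tilde{A})$ I would invert it to obtain $x_k = R(i\gw_k,\Atmo)(\tilde{B}\KL z_k + \tilde{B}_d\tilde{E}\phi_k)$, which is exactly the claimed first component.

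The heart of the argument is the second row, which becomes $(i\gw_k-\Gmo)z_k = \mc{G}_2(\tilde{C}_\Lambda x_k + \tilde{D}\KL z_k + \tilde{F}\phi_k)$. The left-hand side lies in $\ran(i\gw_k-\Gmo)$ and the right-hand side in $\ran(\mc{G}_2)$, so by \eqref{eq:Gconds1} their common value is $0$; this forces $(i\gw_k-\Gmo)z_k=0$, i.e.\ $z_k\in\ker(i\gw_k-\mc{G}_1)$ (whence $z_k\in Z_1=\Dom(K)$ and $\KL z_k=Kz_k$). Consequently $\mc{G}_2(\tilde{C}_\Lambda x_k + \tilde{D}Kz_k + \tilde{F}\phi_k)=0$, and \eqref{eq:Gconds2}, the injectivity of $\mc{G}_2$, yields $\tilde{C}_\Lambda x_k + \tilde{D}Kz_k + \tilde{F}\phi_k=0$. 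Substituting the expression for $x_k$ and recognising $\tilde{P}(i\gw_k)=\tilde{C}_\Lambda R(i\gw_k,\Atmo)\tilde{B}+\tilde{D}$ and $\tilde{P}_d(i\gw_k)=\tilde{C}_\Lambda R(i\gw_k,\Atmo)\tilde{B}_d$ converts this identity into $\tilde{P}(i\gw_k)Kz_k = -\tilde{P}_d(i\gw_k)\tilde{E}\phi_k - \tilde{F}\phi_k$, establishing that $z_k$ satisfies the stated pair of conditions. Uniqueness of an element of $\ker(i\gw_k-\mc{G}_1)$ obeying this transfer-function equation is precisely the uniqueness assertion of Theorem~\ref{thm:RORPchar}, which completes the proof.

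The main obstacle I anticipate is not the algebra but the bookkeeping in the extended spaces: ensuring that $R(i\gw_k,\Aetmo)\tilde{B}_e\phi_k$ genuinely lands in a domain on which $\tilde{C}_\Lambda x_k$ and $Kz_k$ are defined, so that the block manipulations are valid rather than merely formal. As indicated, this is resolved by the regularity statement of Theorem~\ref{thm:CLregularity} for the perturbed plant, exactly in the spirit of the computation carried out in the proof of Theorem~\ref{thm:IMP}.
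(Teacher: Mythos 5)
Your proposal is correct and follows essentially the same route as the paper's proof: invoke Theorem~\ref{thm:CLregularity} for the perturbed system to identify $R(i\gw_k,\Aetmo)\tilde{B}_e\phi_k$ as the pair $(x_k,z_k)^T\in \XBBdt\times\ZG$ solving the two block equations, solve the first row using $i\gw_k\in\rho(\tilde{A})$, and apply the \Gconds\ to the second row to conclude $z_k\in\ker(i\gw_k-\mc{G}_1)$ and the transfer-function identity. The only minor difference is bookkeeping for the uniqueness claim: you cite the uniqueness assertion of Theorem~\ref{thm:RORPchar} (stated under slightly different hypotheses), while the paper obtains it from the uniqueness of the solution pair of the block system furnished by Theorem~\ref{thm:CLregularity}; both dispositions are sound.
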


\begin{proof}
By Theorem~\ref{thm:CLregularity} we have that  $(x_k,z_k)^T=R(i\gw_k,\Aetmo )\tilde{B}_e\phi_k$ is the  unique element $(x_k,z_k)^T\in \XBBdt\times \ZG$ satisfying 
\eq{
\left\{
\begin{array}{l}
  (i\gw_k-\Atmo )x_k  = \tilde{B}\KL z_k + \tilde{B}_d\tilde{E}\phi_k\\
  (i\gw_k-\Gmo )z_k = \mc{G}_2 (\CLt x_k + D\KL z_k + \tilde{F}\phi_k).
\end{array}
\right.
}
Thus $x_k = R(i\gw_k,\Atmo )(\tilde{B}\KL z_k + \tilde{B}_d\tilde{E}\phi_k)$ and the
\Gconds~\eqref{eq:Gconds} imply $z_k\in \ker(i\gw_k-\mc{G}_1)$ and 
\ieq{
0=\CLt x_k + \tilde{D}\KL z_k + \tilde{F}\phi_k 
= \tilde{P}(i\gw_k) \KL z_k +  \tilde{P}_d(i\gw_k) \tilde{E}\phi_k + \tilde{F}\phi_k.
}
\end{proof}

\section{The New Controller Structure}
\label{sec:ContrOne}

In this section we construct a robust controller using the general internal model based structure introduced  in~\cite{Pau16a,Pau15a}.
The construction of the controller 
is completed in steps, and the required properties 
of the parts of the controller are verified in the proof of Theorem~\ref{thm:ContrOneMain}.

\medskip

\noindent\textbf{Step $\bm{1}^\circ$:} 
The state space of the controller is chosen as $Z=\ZI\times X$ and we choose the structures of the operators $(\mc{G}_1,\mc{G}_2,K)$ as
  \eq{
  \mc{G}_{1}=\pmat{G_1&G_2(\CL +D\KtL)\\0&\Amo +B\KtL+L(\CL +D\KtL)}, 
\qquad   \mc{G}_{2}=\pmat{G_2\\L},
  }
  and $K = (\KoL, \; -\KtL)$. 
Due to Assumption~\ref{ass:stabilizability} concerning the stabilizability of $(A,B)$ and the detectability of $(C,A)$, we can choose
$K_2\in\Lin(X_1,U)$ and $L_1\in\Lin(Y,X_{-1})$ in such a way that 
$(\Amo +B\KtL)\vert_X$ and $(A+L_1\CL)\vert_X $ generate exponentially stable semigroups and  $\left( A,[B,~ L_1, ~B_d], \pmatsmall{C\\K_2},D \right)$ is regular. 
For 
$\gl\in \overline{\C_+}$ 
we define
 \eq{
 P_L(\gl) = \CL R(\gl,\Amo +L_1\CL)(B+L_1 D) + D.
 }
 We have from~\cite[Sec. 7]{Wei94} that $(A+L_1\CL,B+L_1D,\CL,D)$ is a regular linear system, and thus $\sup_{\gw\in \R}\norm{P_L(i\gw)}<\infty$.
 
 \medskip
\noindent\textbf{Step $\bm{2}^\circ$:}
  The operator $G_1$ is the internal model of the exosystem~\eqref{eq:exointro}, and it is defined by choosing
  $\ZI = \lp[2](Y)$, and
\eq{
G_1 = \diag \bigl( i\gw_k I_Y\bigr)_{k\in\Z}, \qquad \Dom(G_1) = \setm{(z_k)_{k\in\Z}\in \ZI}{(\gw_kz_k)_{k\in\Z}\in \lp[2](Y)}.
}
The operator
$K_1 = \bigl(\ldots,K_{1,-1},K_{10},K_{11},\ldots)\in \Lin( \ZI,U)$ is assumed to be Hilbert--Schmidt (i.e., $(\norm{K_{1k}})_{k\in\Z}\in \lp[2](\C)$, which is in particular always true if $\dim U<\infty$) and 
$G_2 =(G_{2k})_{k\in\Z}\in  \Lin(Y, \ZI)$.
We choose the components $K_{1k}\in \Lin(Y,U)$ of $K_1$ in such a way that $P_L(i\gw_k)K_{1k}\in \Lin(Y)$ are boundedly invertible. 
This is possible since $P_L(i\gw_k)$ are surjective by Assumption~\ref{ass:PKsurj}.
For example, we can choose  $K_{1k}=\gg_k \frac{P_L(i\gw_k)\pinv}{\norm{P_L(i\gw_k)\pinv}}$ for all $k\in \Z$ and for a sequence $(\gg_k)_{k\in\Z}\subset \lp[2](\C)$ satisfying $\gg_k\neq 0$ for all $k\in\Z$.
For more concrete choices of $K_{1k}$, see Corollary~\ref{cor:ContrOnePolExp}.

\medskip

If $i\gw_k\in \rho(A)$ for some $k$, then 
$P_L(i\gw_k) = (I-\CL R(i\gw_k,A)L_1)\inv P(i\gw_k)$
 implies that $P_L(i\gw_k)K_{1k}$ is boundedly invertible if and only if $P(i\gw_k)K_{1k}$ is boundedly invertible.

 \medskip

\noindent\textbf{Step $\bm{3}^\circ$:} 
We define $H\in\Lin(\ZI,X)$ by
\eq{
Hz 
= \sum_{k\in\Z} R(i\gw_k,\Amo +L_1\CL)(B+L_1D)K_{1k}z_k, 
\qquad \mbox{for} ~~ z=(z_k)_{k\in\Z}\in \ZI.
}

\medskip

\noindent\textbf{Step $\bm{4}^\circ$:}
We choose  $G_2\in \Lin(Y,\ZI)$ as
\eq{
G_2 = (G_{2k})_{k\in\Z} = (-(P_L(i\gw_k)K_{1k})^\ast)\kZ.
}
Finally, we define $L=L_1 + HG_2\in \Lin(Y,X_{-1})$ and 
choose the domain of  $\mc{G}_1$ to be
\eq{
\Dom(\mc{G}_1) = \Dom(G_1)\times \Dom( (\Amo +(B+LD)\KtL+L\CL)\vert_X).
}

The following theorem presents conditions for the solvability of the robust output regulation problem. It should be noted that $(\norm{P_L(i\gw_k)K_{1k}})_{k\in\Z}\in \lp[2](\C)$ implies that $\norm{(P_L(i\gw_k)K_{1k})\inv}\to \infty$ as $\abs{k}\to \infty$, and thus the condition~\eqref{eq:ContrOneEFcond} requires that $\norm{E\phi_k}$ and $\norm{F\phi_k}$ decay sufficiently fast as $\abs{k}\to \infty$.

\begin{thm}
  \label{thm:ContrOneMain}
If $E\in \Lin(W,\Uw)$ and $F\in \Lin(W,Y)$ satisfy%
\begin{subequations}%
  \label{eq:ContrOneEFcond}
  \eqn{
  &\left( \norm{\CL R(i\gw_k,\Amo +L_1\CL)B_d} \norm{(P_L(i\gw_k)K_{1k})\inv} \norm{E\phi_k}\right)_{k\in\Z}\in \lp[2](\C)\\
  &\left( \norm{(P_L(i\gw_k)K_{1k})\inv}\norm{F\phi_k}) \right)_{k\in\Z}\in \lp[2](\C),
  }
\end{subequations}%
then the controller solves the robust output regulation problem.

The controller is then guaranteed to be robust with respect to all perturbations in $\Ops$ for which the strong closed-loop stability is preserved, $\set{i\gw_k}_{k\in\Z}\subset\rho(\tilde{A}_e)$, $\set{i\gw_k}_{\abs{k}\geq N}\subset\rho(\tilde{A})$ for some $N\in\N$, $\tilde{P}(i\gw_k)K_{1k}$ are invertible
whenever  $\abs{k}\geq N$, and for which%
\begin{subequations}%
  \label{eq:ContrOneEFcondpert}
  \eqn{
  &\left( \norm{R(i\gw_k,\Atmo )\left( \tilde{B}_d\tilde{E}\phi_k- \tilde{B}K_{1k}(\tilde{P}(i\gw_k)K_{1k})\inv \tilde{y}_k \right)} \right)_{\abs{k}\geq N} \in \lp[2](\C)\\
  &\left(\norm{(\tilde{P}(i\gw_k)K_{1k})\inv \tilde{y}_k}\right)_{\abs{k}\geq N} \in \lp[2](\C) 
  } 
\end{subequations}%
where
 $\tilde{y}_k = \tilde{P}_d(i\gw_k)\tilde{E}\phi_k + \tilde{F}\phi_k$.
\end{thm}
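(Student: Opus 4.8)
The plan is to show that the constructed controller meets the hypotheses of Theorem~\ref{thm:IMP} and then to read off both the nominal regulation and the robustness statement from it. Concretely, I would verify that $(\mc{G}_1,\mc{G}_2,K)$ satisfies the \Gconds\ of Definition~\ref{def:Gconds}, that it stabilizes the closed loop strongly, that $\set{i\gw_k}_{k\in\Z}\subset\rho(\Aemo)$, and that $(\RBm)_{k\in\Z}\in\lp[2](X_e)$; Theorem~\ref{thm:IMP} then delivers robust output regulation. The admissible class of perturbations would afterwards be obtained by translating the abstract solvability hypothesis of Theorem~\ref{thm:IMP} into the explicit conditions~\eqref{eq:ContrOneEFcondpert} through Lemmas~\ref{lem:Sylsolvabilitycond} and~\ref{lem:RBeformgeneral}.

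First I would check the \Gconds. The condition $\ker(\mc{G}_2)=\set{0}$ is immediate, since every component $G_{2k}=-(P_L(i\gw_k)K_{1k})^\ast$ is injective by the bounded invertibility arranged in Step~$2^\circ$. For the range condition, suppose $(i\gw_k-\mc{G}_1)z=\mc{G}_2 y$ with $z=(z^1,z^2)$. The first block gives $(i\gw_k-G_1)z^1=G_2(y+(\CL+D\KtL)z^2)$; reading off the $k$-th coordinate, where $i\gw_k-G_1$ vanishes, and using injectivity of $G_{2k}$ forces $y+(\CL+D\KtL)z^2=0$. Substituting this into the second block and cancelling the output-injection terms reduces it to $(i\gw_k-\Amo-B\KtL)z^2=0$; since $(\Amo+B\KtL)\vert_X$ is exponentially stable we have $i\gw_k\in\rho(\Amo+B\KtL)$, hence $z^2=0$, and therefore $y=0$ and $\mc{G}_2 y=0$.

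The core of the argument is a bounded, boundedly invertible state transformation that displays the closed-loop generator in block-triangular form. The operator $H$ of Step~$3^\circ$ satisfies the Sylvester identity $HG_1=(\Amo+L_1\CL)H+(B+L_1D)K_1$, which I would verify directly from its series definition using $\Amo R(i\gw_k,\Amo+L_1\CL)=i\gw_k R(i\gw_k,\Amo+L_1\CL)-I$. Passing to coordinates $(\zeta,z^1,z^2)$ with $\zeta=x+z^2-Hz^1$, and using both this identity and $L=L_1+HG_2$, one finds that $\zeta$ decouples with $\dot\zeta=(\Amo+L_1\CL)\zeta$; the $z^1$-block becomes $G_1+G_2(\CL H+DK_1)$, and since $\CL H+DK_1$ acts as $z\mapsto\sum_j P_L(i\gw_j)K_{1j}z_j=:Mz$ while $G_2=-M^\ast$, this block is the dissipative operator $G_1-M^\ast M$; the $z^2$-block is the exponentially stable $\Amo+B\KtL$ driven by $(\zeta,z^1)$. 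Exponential stability of the two plant-type blocks is available by construction, so strong closed-loop stability reduces to strong stability of $G_1-M^\ast M$. This is a bounded dissipative perturbation of the skew-adjoint $G_1$ (the uniform gap of $\set{\gw_k}_{k\in\Z}$ guarantees that $G_1$ generates an isometric group and that its spectrum is suitably separated); the same injectivity computation as above shows it has no eigenvalue at $i\gw_k$, and I would conclude strong stability by the Arendt--Batty--Lyubich--V\~u theorem together with a cascade estimate that propagates the exponential decay of $\zeta$ and $z^2$. The inclusion $\set{i\gw_k}_{k\in\Z}\subset\rho(\Aemo)$ then follows since $\Aemo$ is similar to this triangular operator, all of whose diagonal blocks have $i\gw_k$ in their resolvent sets.

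For the summability condition I would solve $(i\gw_k-\Aemo)\xi_k=B_e\phi_k$ in the transformed coordinates. The $\zeta$-component equals $R(i\gw_k,\Amo+L_1\CL)(\Bw E\phi_k+L_1F\phi_k)$, which is square-summable because $E$ and $F$ are Hilbert--Schmidt and $\Amo+L_1\CL$ is exponentially stable. Since the internal-model source $G_2F\phi_k$ and the coupling $G_2\CL\xi_\zeta$ both factor through $G_2=-M^\ast$, the $z^1$-component turns out to be supported on the single $k$-th coordinate and equals $-(P_L(i\gw_k)K_{1k})\inv\bigl(F\phi_k+\CL\xi_\zeta\bigr)$; recognizing $\CL R(i\gw_k,\Amo+L_1\CL)\Bw$ inside $\CL\xi_\zeta$, its norm is dominated precisely by the two sequences in~\eqref{eq:ContrOneEFcond}, and the $z^2$-component is $\ell^2$ as well, being the resolvent of the exponentially stable $\Amo+B\KtL$ applied to already-$\ell^2$ data. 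Theorem~\ref{thm:IMP} now yields the nominal claim. For the robustness statement I would invoke the robustness part of Theorem~\ref{thm:IMP}: by Lemma~\ref{lem:Sylsolvabilitycond} it suffices that $(R(i\gw_k,\Aetmo)\tilde{B}_e\phi_k)_{k\in\Z}\in\lp[2](X_e)$, and for $\abs{k}\geq N$ Lemma~\ref{lem:RBeformgeneral} expresses this resolvent through the unique $z_k\in\ker(i\gw_k-\mc{G}_1)$ with $\tilde{P}(i\gw_k)Kz_k=-\tilde{y}_k$; because $z_k$ lies in the internal-model block one has $Kz_k=-K_{1k}(\tilde{P}(i\gw_k)K_{1k})\inv\tilde{y}_k$, and reading off its two resolvent components reproduces exactly~\eqref{eq:ContrOneEFcondpert}. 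The step I expect to be the main obstacle is the stability analysis of the infinite-dimensional internal model $G_1-M^\ast M$ in the presence of the unbounded operators $B$, $\Bw$, $\CL$ and the unbounded $K$, $L$: justifying the transformation at the level of generators and domains, confirming admissibility of the coupling $G_2\CL$, and obtaining resolvent bounds uniform enough in $k$ to make the cascade and spectral arguments rigorous.
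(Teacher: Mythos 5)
Your proposal follows essentially the same route as the paper: the same verification of the \Gconds, the same similarity transform built from $H$ and the Sylvester identity $HG_1=(\Amo+L_1\CL)H+(B+L_1D)K_1$, the same reduction of closed-loop stability to the stabilized internal model, the same single-coordinate resolvent formula leading to~\eqref{eq:ContrOneEFcond}, and the same combination of Lemma~\ref{lem:Sylsolvabilitycond}, Lemma~\ref{lem:RBeformgeneral} and the characterization of $\ker(i\gw_k-\mc{G}_1)$ for the perturbation class~\eqref{eq:ContrOneEFcondpert}.

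There is, however, a genuine gap at the most delicate point: the spectral analysis of the internal-model block $G_1-G_2G_2^\ast$ (your $G_1-M^\ast M$). You establish only that $i\gw_k$ is not an eigenvalue and then appeal to Arendt--Batty--Lyubich--V\~u, and later you assert $\set{i\gw_k}_{k\in\Z}\subset\rho(A_e)$ on the grounds that all diagonal blocks of the triangular operator have $i\gw_k$ in their resolvent sets. That assertion presupposes $i\gw_k\in\rho(G_1-G_2G_2^\ast)$, which is strictly stronger than injectivity of $i\gw_k-G_1+G_2G_2^\ast$: each $i\gw_k$ is a true spectral point of $G_1$ whose eigenspace is a copy of $Y$, and the theorem allows $\dim Y=\infty$, in which case $i\gw_k-G_1$ is not Fredholm and $G_2G_2^\ast$ is not compact, so absence of point spectrum does not exclude $i\gw_k$ from the approximate point or residual spectrum. (Only when $\dim Y<\infty$ could one repair your step by a Fredholm index argument, and you do not make that argument either.) This is precisely the content of the paper's Lemma~\ref{lem:G1fbstab}: a contradiction argument exploiting the uniform gap of $\set{\gw_k}_{k\in\Z}$ and the decomposition $\ZI=\ran(i\gw_n-G_1)\oplus\ker(i\gw_n-G_1)$ yields a uniform lower bound for $i\gw_n-G_1+G_2G_2^\ast$, hence closed range, and the Mean Ergodic Theorem gives dense range; only then does $i\gw_n\in\rho(G_1-G_2G_2^\ast)$ follow. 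Your outline needs this (or an equivalent argument) in three places: for the hypothesis $\set{i\gw_k}_{k\in\Z}\subset\rho(A_e)$ of Theorem~\ref{thm:IMP}; for your explicit formula for the $z^1$-component of $R(i\gw_k,\Aemo)B_e\phi_k$, which evaluates a resolvent whose existence you have not shown; and for the ABLV step itself, since ABLV requires countability of the boundary spectrum and emptiness of the adjoint point spectrum on $i\R$, neither of which you verify --- the paper sidesteps both by first proving $i\R\subset\rho(G_1-G_2G_2^\ast)$, after which strong stability is immediate. With Lemma~\ref{lem:G1fbstab} (or a reconstruction of it) inserted at this point, the remainder of your outline goes through as in the paper.
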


In the proof of Theorem~\ref{thm:ContrOneMain} we will see that if $\set{i\gw_k}_{k\in\Z}\subset \rho(A)$, then 
the conditions~\eqref{eq:ContrOneEFcond} can alternatively be replaced with conditions~\eqref{eq:ContrOneEFcondpert} for the nominal plant. However, the condition~\eqref{eq:ContrOneEFcond} has the advantage that the condition only involves the resolvent and the transfer function of the stabilized plant.
This is an advantage if $i\gw_k\notin\rho(A)$ for some $k$, as is the case in the example in Section~\ref{sec:heatex}.
In the situation where $\sup_{\abs{k}\geq N} \norm{R(i\gw_k,A)}<\infty$ for some $N\in\N$, also the norms 
$ \norm{R(i\gw_k,\Amo )B}$ and $\norm{R(i\gw_k,\Amo )B_d}$ are uniformly bounded with respect to $k\in\Z$ with $\abs{k}\geq N$, and the condition~\eqref{eq:ContrOneEFcondpert} simplifies to the following form.

\begin{cor}
  \label{cor:ContrOneAltEF}
If there exists $N\in\N$ such that $\set{i\gw_k}_{\abs{k}\geq N}\subset\rho(A)$ and
$\vspace{-1.6ex}\displaystyle\sup_{\abs{k}\geq N} \norm{R(i\gw_k,\Amo )}<\infty$,
  then the conclusions of Theorem~\textup{\ref{thm:ContrOneMain}} hold if 
\eq{
\left( \norm{(P(i\gw_k)K_{1k})\inv}(\norm{P_d(i\gw_k) E\phi_k}  + \norm{F\phi_k} ) \right)_{\abs{k}\geq N} \in \lp[2](\C)
    }
    and the controller is guaranteed to be robust with respect to perturbations in $\Ops$ for which the strong closed-loop stability is preserved, 
    $\set{i\gw_k}_{k\in\Z}\subset\rho(\tilde{A}_e)$,$\set{i\gw_k}_{\abs{k}\geq N}\subset \rho(\tilde{A})$,
    $\sup_{\abs{k}\geq N} \norm{R(i\gw_k,\Atmo )}<\infty$,
and
    \eq{
    \left( \norm{(\tilde{P}(i\gw_k)K_{1k})\inv}(\norm{\tilde{P}_d(i\gw_k) \tilde{E}\phi_k}  + \norm{\tilde{F}\phi_k} ) \right)_{\abs{k}\geq N} \in \lp[2](\C).
    }
\end{cor}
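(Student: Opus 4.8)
The plan is to derive Corollary~\ref{cor:ContrOneAltEF} directly from Theorem~\ref{thm:ContrOneMain} by showing that the uniform resolvent bound $\sup_{\abs{k}\geq N}\norm{R(i\gw_k,\Amo)}<\infty$ lets me replace all the stabilized-plant quantities in conditions~\eqref{eq:ContrOneEFcond} and~\eqref{eq:ContrOneEFcondpert} by the corresponding quantities for the original (unstabilized) plant, up to uniformly bounded factors. So this is not a fresh proof but a simplification argument: I verify that the hypotheses of the corollary imply the hypotheses of the theorem.

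\medskip

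First I would handle the extension of the resolvent bound to the scale space. The claim in the text is that $\sup_{\abs{k}\geq N}\norm{R(i\gw_k,\Amo)}<\infty$ forces $\norm{R(i\gw_k,\Amo)B}$ and $\norm{R(i\gw_k,\Amo)B_d}$ to be uniformly bounded as well. For this I would use the standard resolvent decomposition on the scale space: writing $R(i\gw_k,\Amo)B = R(i\gw_k,\Amo)(\gl_0-\Amo)R(\gl_0,\Amo)B$ and noting $R(\gl_0,\Amo)B\in\Lin(U,X)$ is a fixed bounded operator while $(\gl_0-\Amo)R(i\gw_k,\Amo) = I + (\gl_0-i\gw_k)R(i\gw_k,\Amo)$, whose norm grows only like $\abs{\gw_k}\norm{R(i\gw_k,\Amo)}$. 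I would need to be slightly careful here, since a naive bound gives linear growth in $\abs{\gw_k}$ rather than uniform boundedness. The cleaner route is to use admissibility of $B$ together with the uniform resolvent bound, or to invoke the relevant resolvent estimate from regular-systems theory (the type used in \cite{Wei94,StaWei02}); the key structural fact is that $R(i\gw_k,\Amo)B$ and $R(i\gw_k,\Amo)B_d$ land in $X$ and their norms are controlled once $\sup_k\norm{R(i\gw_k,A)}<\infty$. I expect this scale-space resolvent estimate to be the main technical obstacle, since it is exactly the point where unboundedness of $B$ and $B_d$ interacts with the resolvent growth.

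\medskip

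Next I would relate the stabilized transfer function to the original one at the frequencies $i\gw_k$. Since $\set{i\gw_k}_{\abs{k}\geq N}\subset\rho(A)$, the identity already recorded in the excerpt, namely $P_L(i\gw_k) = (I-\CL R(i\gw_k,A)L_1)\inv P(i\gw_k)$, applies. The factor $(I-\CL R(i\gw_k,A)L_1)\inv$ is uniformly bounded for $\abs{k}\geq N$: its norm is controlled because $\CL R(i\gw_k,A)L_1$ is an admissible-operator expression bounded via $\sup_k\norm{R(i\gw_k,A)}<\infty$ (using that $(A+L_1\CL,B+L_1D,\CL,D)$ is regular so $\sup_{\gw}\norm{P_L(i\gw)}<\infty$, and a symmetric bound from the exponential stability of $A+L_1\CL$). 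Consequently $P_L(i\gw_k)K_{1k}$ is boundedly invertible iff $P(i\gw_k)K_{1k}$ is, and $\norm{(P_L(i\gw_k)K_{1k})\inv}$ and $\norm{(P(i\gw_k)K_{1k})\inv}$ differ only by a uniformly bounded factor. The same comparison gives $\norm{\CL R(i\gw_k,\Amo+L_1\CL)B_d}\lesssim \norm{P_d(i\gw_k)}$ up to the stabilized resolvent, again bounded by the previous step.

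\medskip

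Finally I would assemble these comparisons. For the nominal conditions~\eqref{eq:ContrOneEFcond}, the two bounded-factor replacements turn the stabilized-plant $\lp[2]$-conditions into the single combined condition $\bigl(\norm{(P(i\gw_k)K_{1k})\inv}(\norm{P_d(i\gw_k)E\phi_k}+\norm{F\phi_k})\bigr)_{\abs{k}\geq N}\in\lp[2](\C)$ stated in the corollary, using that $\lp[2]$ membership is preserved under multiplication by a bounded sequence and that the two separate conditions can be merged into one once the resolvent factors are uniformly bounded. For the robustness conditions~\eqref{eq:ContrOneEFcondpert}, I would apply the identical argument to the perturbed plant, now invoking the corollary's hypotheses $\set{i\gw_k}_{\abs{k}\geq N}\subset\rho(\tilde{A})$ and $\sup_{\abs{k}\geq N}\norm{R(i\gw_k,\Atmo)}<\infty$ to bound $\norm{R(i\gw_k,\Atmo)\tilde{B}}$, $\norm{R(i\gw_k,\Atmo)\tilde{B}_d}$ uniformly, and expanding $\tilde{y}_k = \tilde{P}_d(i\gw_k)\tilde{E}\phi_k + \tilde{F}\phi_k$. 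The first part of~\eqref{eq:ContrOneEFcondpert} then reduces, via the triangle inequality and the uniform bounds, to the product of $\norm{(\tilde{P}(i\gw_k)K_{1k})\inv}$ with $\norm{\tilde{P}_d(i\gw_k)\tilde{E}\phi_k}+\norm{\tilde{F}\phi_k}$ lying in $\lp[2]$, which is exactly the corollary's perturbed hypothesis. Invoking Theorem~\ref{thm:ContrOneMain} then yields robust output regulation and the stated robustness class, completing the proof.
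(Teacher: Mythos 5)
Your overall plan (reduce the corollary to Theorem~\ref{thm:ContrOneMain} via uniformly bounded comparison factors) is the right idea, and your treatment of the perturbed class is essentially correct, but the route you take for the nominal plant has a genuine gap. You propose to show that the corollary's hypothesis implies~\eqref{eq:ContrOneEFcond}, and this implication is false in general: condition~\eqref{eq:ContrOneEFcond} involves the \emph{operator} norm $\norm{\CL R(i\gw_k,\Amo+L_1\CL)B_d}$ multiplied by $\norm{E\phi_k}$, whereas the corollary only controls the \emph{vector} norm $\norm{P_d(i\gw_k)E\phi_k}$. When $\dim U_d>1$ the quantity $\norm{P_d(i\gw_k)E\phi_k}$ can be far smaller than $\norm{P_d(i\gw_k)}\norm{E\phi_k}$ (e.g.\ if $E\phi_k$ lies near $\ker(P_d(i\gw_k))$ it can vanish identically while $\norm{P_d(i\gw_k)}\norm{E\phi_k}\norm{(P(i\gw_k)K_{1k})\inv}$ grows), so no comparison of $P_L$ with $P$ by uniformly bounded factors can convert the corollary's condition into~\eqref{eq:ContrOneEFcond}; the inequality runs in the wrong direction. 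The paper's own route avoids this entirely: the remark immediately preceding the corollary records that, when $\set{i\gw_k}_{\abs{k}\geq N}\subset\rho(A)$, part (v) of the proof of Theorem~\ref{thm:ContrOneMain} (via Lemma~\ref{lem:RBeformgeneral}) allows~\eqref{eq:ContrOneEFcond} to be \emph{replaced} by~\eqref{eq:ContrOneEFcondpert} written for the nominal plant, and those conditions are formulated purely at the vector level through $y_k=P_d(i\gw_k)E\phi_k+F\phi_k$. The corollary's hypothesis then implies~\eqref{eq:ContrOneEFcondpert} for the nominal plant by the triangle inequality, the uniform bounds on $\norm{R(i\gw_k,\Amo)B}$, $\norm{R(i\gw_k,\Amo)B_d}$ and $\sup_k\norm{K_{1k}}$, and the Hilbert--Schmidt property $(\norm{E\phi_k})_{k}\in\lp[2](\C)$ --- which is exactly the argument you already give for the perturbed class; you need only apply it to the nominal plant as well, instead of detouring through~\eqref{eq:ContrOneEFcond}.

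On the point you yourself flag as the main technical obstacle: you are right that the fixed-$\gl_0$ decomposition fails (it gives $O(\abs{\gw_k})$ growth), and the correct fix is the resolvent identity anchored at the nearby point $1+i\gw_k$, namely $R(i\gw_k,\Amo)B=\bigl(I+R(i\gw_k,A)\bigr)R(1+i\gw_k,\Amo)B$, combined with the standard admissibility estimate $\sup_{\gw\in\R}\norm{R(1+i\gw,\Amo)B}<\infty$; together with $\sup_{\abs{k}\geq N}\norm{R(i\gw_k,A)}<\infty$ this yields the uniform bounds on $\norm{R(i\gw_k,\Amo)B}$ and $\norm{R(i\gw_k,\Amo)B_d}$ (and, under the corollary's perturbed hypotheses, on $\norm{R(i\gw_k,\Atmo)\tilde{B}}$ and $\norm{R(i\gw_k,\Atmo)\tilde{B}_d}$). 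This is precisely the fact the paper asserts without proof in the sentence before the corollary, so your hesitation there concerns a real step, but one that is fixable; the operator-norm versus vector-norm confusion in the nominal case is the part of your argument that must actually be rerouted.
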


It should also be noted that if $\norm{\CL R(i\gw_k ,A)L_1}$ is uniformly bounded for large $\abs{k}$, then
the asymptotic rate of $\norm{P_L(i\gw_k)\pinv}$ is at most equal to the rate of  $\norm{P(i\gw_k)\pinv}$.
Due to our assumptions the norms $\norm{\CL R(i\gw_k,A_L)B_d}$ are uniformly bounded with respect to $k\in\Z$, and it is possible that $\norm{\CL R(i\gw_k,A_L)B_d}\to 0$ as $k\to \pm \infty$. Thus the summability condition for  $(\norm{E\phi_k})_{k\in\Z}$ in~\eqref{eq:ContrOneEFcond} is in general weaker or equivalent compared to the summability condition for the sequence  $(\norm{F\phi_k})_{k\in\Z}$.

The following corollary presents specific choices of $K_1$ 
in the cases where
$\norm{P_L(i\gw_k)\pinv}$ are 
either
polynomially or exponentially bounded.

\begin{cor}
  \label{cor:ContrOnePolExp}
  The following hold. 
  \begin{itemize}
    \item[\textup{(a)}] If 
      $\norm{P_L(i\gw_k)\pinv}=\Omi(\abs{\gw_k}^\ga) $ for some $\ga>0$ and $(\gg_k)_{k\in\Z}\in \lp[2](\C)$, the choice 
      \eqn{
      \label{eq:ContrOneK1choice}
      K_{1k} = \gg_k\frac{P_L(i\gw_k)\pinv}{\norm{P_L(i\gw_k)\pinv}}  
      }
      solves the robust output regulation problem for operators $E$ and $F$ that satisfy
      \eq{
      \left( \frac{\abs{\gw_k}^\ga }{\abs{\gg_k}} (\norm{\CL R(i\gw_k,A+L_1\CL)B_d}\norm{E\phi_k}+\norm{F\phi_k}) \right)_{k\in\Z}\in \lp[2](\C).
      \hspace{-5ex}%
      }
      If we in particular choose $\gg_k = \abs{\gw_k}^{-\gb}$  for some $\gb>1/2$ whenever $\gw_k\neq 0$, then $\abs{\gw_k}^\ga /\abs{\gg_k} = \abs{\gw_k}^{\ga+\gb}$ whenever $\gw_k\neq 0$.
      
    \item[\textup{(b)}] If
      $\norm{P_L(i\gw_k)\pinv}=\Omi(e^{\ga\abs{\gw_k}}) $ for some $\ga>0$ and $(\gg_k)_{k\in\Z}\in\lp[2](\C)$, the choice~\eqref{eq:ContrOneK1choice}
solves the robust output regulation problem for operators $E$ and $F$ that satisfy
      \eq{
      \left( \frac{e^{\ga\abs{\gw_k}} }{\abs{\gg_k}} (\norm{E\phi_k}+\norm{F\phi_k}) \right)_{k\in\Z}\in \lp[2](\C).
      }
  \end{itemize}
\end{cor}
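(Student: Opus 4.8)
The plan is to specialize the general sufficient conditions of Theorem~\ref{thm:ContrOneMain} to the two growth regimes for $\norm{P_L(i\gw_k)\pinv}$. The key observation is that with the choice~\eqref{eq:ContrOneK1choice}, namely $K_{1k} = \gg_k P_L(i\gw_k)\pinv / \norm{P_L(i\gw_k)\pinv}$, the operator $P_L(i\gw_k)K_{1k}$ and its inverse can be controlled explicitly. First I would establish the crucial estimate on $\norm{(P_L(i\gw_k)K_{1k})\inv}$, which is where the growth rate of $\norm{P_L(i\gw_k)\pinv}$ enters. Since $P_L(i\gw_k)P_L(i\gw_k)\pinv$ acts as the identity on $\ran(P_L(i\gw_k))=Y$ (the operators are surjective by Assumption~\ref{ass:PKsurj}), one computes $P_L(i\gw_k)K_{1k} = \gg_k P_L(i\gw_k)P_L(i\gw_k)\pinv/\norm{P_L(i\gw_k)\pinv}$, and the Moore--Penrose identities give that this equals $\gg_k/\norm{P_L(i\gw_k)\pinv}$ times the identity on $Y$. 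Hence $(P_L(i\gw_k)K_{1k})\inv = \bigl(\norm{P_L(i\gw_k)\pinv}/\gg_k\bigr)I_Y$, so that $\norm{(P_L(i\gw_k)K_{1k})\inv} = \norm{P_L(i\gw_k)\pinv}/\abs{\gg_k}$.

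With this identity in hand, both parts reduce to substituting the hypothesized bound on $\norm{P_L(i\gw_k)\pinv}$ into the condition~\eqref{eq:ContrOneEFcond} of Theorem~\ref{thm:ContrOneMain}. For part~(a), the assumption $\norm{P_L(i\gw_k)\pinv}=\Omi(\abs{\gw_k}^\ga)$ means $\norm{P_L(i\gw_k)\pinv}\lesssim \abs{\gw_k}^\ga$ for large $\abs{k}$, so $\norm{(P_L(i\gw_k)K_{1k})\inv}\lesssim \abs{\gw_k}^\ga/\abs{\gg_k}$. Plugging this into the two sequences in~\eqref{eq:ContrOneEFcond} shows that they are dominated (up to the finitely many small-$\abs{k}$ terms, which are harmless) by the single displayed sequence
\[
\left( \frac{\abs{\gw_k}^\ga}{\abs{\gg_k}}\bigl(\norm{\CL R(i\gw_k,A+L_1\CL)B_d}\norm{E\phi_k}+\norm{F\phi_k}\bigr)\right)_{k\in\Z},
\]
so that its membership in $\lp[2](\C)$ forces both conditions in~\eqref{eq:ContrOneEFcond} to hold, whence Theorem~\ref{thm:ContrOneMain} applies. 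The final sentence about the explicit choice $\gg_k=\abs{\gw_k}^{-\gb}$ with $\gb>1/2$ is a direct arithmetic check: $(\gg_k)\in\lp[2](\C)$ precisely because $\sum \abs{\gw_k}^{-2\gb}<\infty$ under the uniform-gap hypothesis on $\set{\gw_k}$, and $\abs{\gw_k}^\ga/\abs{\gg_k}=\abs{\gw_k}^{\ga+\gb}$. Part~(b) is entirely parallel, with $\abs{\gw_k}^\ga$ replaced by $e^{\ga\abs{\gw_k}}$; note here that the term $\norm{\CL R(i\gw_k,A+L_1\CL)B_d}$ can be absorbed since it is uniformly bounded (as remarked after Theorem~\ref{thm:ContrOneMain}), which is why only $\norm{E\phi_k}+\norm{F\phi_k}$ appears in the displayed sufficient condition.

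The only genuine subtlety — and the step I would treat most carefully — is the verification that $P_L(i\gw_k)K_{1k}$ is indeed a scalar multiple of the identity, which rests on the Moore--Penrose pseudoinverse identity $AA\pinv A=A$ specialized to the surjective case $AA\pinv=I$ (on $Y$). Everything else is a routine domination argument combining the explicit formula for $\norm{(P_L(i\gw_k)K_{1k})\inv}$ with the $O(\cdot)$-asymptotics and the definition of $\lesssim$. I would also note in passing that the robustness claims transfer automatically: under the perturbations admitted in Theorem~\ref{thm:ContrOneMain}, the conditions~\eqref{eq:ContrOneEFcondpert} are handled by the same substitution since $\tilde{P}(i\gw_k)K_{1k}$ inherits the invertibility and scalar-multiple structure for $\abs{k}\geq N$, so no separate argument is needed beyond citing the robustness conclusion of Theorem~\ref{thm:ContrOneMain}.
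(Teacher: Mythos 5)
Your proposal is correct and follows exactly the route the paper intends: the corollary is a direct specialization of Theorem~\ref{thm:ContrOneMain}, and your key identity $\norm{(P_L(i\gw_k)K_{1k})\inv}=\norm{P_L(i\gw_k)\pinv}/\abs{\gg_k}$, obtained from $P_L(i\gw_k)P_L(i\gw_k)\pinv=I_Y$ for the surjective $P_L(i\gw_k)$, is precisely what makes the substitution into~\eqref{eq:ContrOneEFcond} work. Your handling of the remaining points (absorbing the uniformly bounded factor $\norm{\CL R(i\gw_k,A+L_1\CL)B_d}$ in part~(b), the finitely many exceptional indices from the $\Omi(\cdot)$ hypothesis, and the uniform-gap argument giving $(\abs{\gw_k}^{-\gb})_k\in\lp[2](\C)$ for $\gb>1/2$) matches the paper's implicit reasoning.
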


We begin
the proof of Theorem~\ref{thm:ContrOneMain} 
by considering the stability properties of the semigroup generated by $G_1-G_2G_2^\ast$. 
If $\dim Y_k<\infty$ the stability
follows from~\cite{Ben78a}.

\begin{lem}
  \label{lem:G1fbstab}
  Assume $U$ and $Y_k$ for $k\in\Z$ are Hilbert spaces.
  Consider
  $\ZI = \setm{(z_k)_{k\in\Z}\in\mbox{\scalebox{1.4}{$\otimes$}}_{k\in\Z}Y_k}{\sum_{k\in\Z}\norm{z_k}_{Y_k}^2<\infty}$ with inner product $\iprod{z}{v}=\sum_{k\in\Z}\iprod{z_k}{v_k}_{Y_k}$ for $z=(z_k)_k$ and $v=(v_k)_k$. 
Assume $\set{i\gw_k}_{k\in\Z}$ has no finite accumulation points, and let $G_1 = \diag(i\gw_k I_{Y_k})\kZ$ on $\ZI$ with domain $\Dom(G_1)=\setm{(z_k)\kZ\in \ZI}{(\gw_kz_k)_k\in\ZI}$ and $G_2 = (G_{2k})_{k\in\Z} \in \Lin(U,\ZI)$.  If the components $G_{2k}\in \Lin(U,Y_k)$ of $G_2$ are surjective and $G_{2k}^\ast$ have closed ranges for all $k\in\Z$, then the semigroup generated by $G_1-G_2G_2^\ast$ is strongly stable and $i\R\subset \rho( G_1-G_2G_2^\ast)$.

  Moreover, 
if $G_{2k}$ are boundedly invertible for all $k\in \Z$, 
then
  \ieq{
  \norm{R(i\gw_k,G_1-G_2G_2^\ast)G_2}=\norm{G_{2k}\inv}
  } 
  for all for $k\in\Z$.
\end{lem}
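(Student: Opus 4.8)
The plan is to set $A_0 := G_1 - G_2G_2^\ast$ and to obtain the strong stability by first proving the stronger spectral statement $i\R\subset\rho(A_0)$. Since the $i\gw_k$ are purely imaginary, Stone's theorem shows that $G_1$ is skew-adjoint and generates a unitary group, while $G_2G_2^\ast\in\Lin(\ZI)$ is bounded, self-adjoint and nonnegative. Thus $A_0$ is a bounded dissipative perturbation of a group generator; it therefore generates a $C_0$-semigroup, and since for $z\in\Dom(G_1)$ one has $\re\iprod{A_0z}{z} = \re\iprod{G_1z}{z} - \norm{G_2^\ast z}^2 = -\norm{G_2^\ast z}^2\le 0$, that semigroup is a contraction semigroup. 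Once $i\R\subset\rho(A_0)$ is established, $\gs(A_0)\cap i\R=\varnothing$ and hence also $\gs_p(A_0^\ast)\cap i\R=\varnothing$, so the conclusion of strong stability follows immediately from the Arendt--Batty--Lyubich--Vu theorem.

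For injectivity of $i\gw-A_0$ on the imaginary axis I would test $(i\gw-A_0)z=0$ against $z$: the dissipativity identity forces $G_2^\ast z=0$, whence $G_1z=i\gw z$, i.e.\ componentwise $(\gw_k-\gw)z_k=0$, so $z$ is supported on the (single) index $k_0$ with $\gw_{k_0}=\gw$. For that index $0=G_2^\ast z = G_{2k_0}^\ast z_{k_0}$, and since $G_{2k_0}$ is surjective its adjoint $G_{2k_0}^\ast$ is injective, so $z=0$. Hence $i\gw-A_0$ is injective for every $\gw\in\R$.

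For surjectivity I would distinguish two cases. If $i\gw\in\rho(G_1)$ (which holds off the discrete set $\{\gw_k\}$, as it has no finite accumulation point), I factor $i\gw-A_0=(i\gw-G_1)\bigl(I+R(i\gw,G_1)G_2G_2^\ast\bigr)$ and reduce the question to invertibility of $I+M(\gw)$ on $U$, where $M(\gw)=G_2^\ast R(i\gw,G_1)G_2$; on the imaginary axis $R(i\gw,G_1)$ is skew-adjoint, hence so is $M(\gw)$, giving $\gs(M(\gw))\subset i\R$ and $-1\in\rho(M(\gw))$. The main obstacle is the resonant case $i\gw=i\gw_{k_0}\in\gs(G_1)$, where $R(i\gw_{k_0},G_1)$ no longer exists. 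There I would split $\ZI=Y_{k_0}\oplus\ZI'$ with $\ZI'=\bigoplus_{k\ne k_0}Y_k$; since $i\gw_{k_0}$ lies in the resolvent set of the restriction $G_1'$ of $G_1$ to $\ZI'$, the off-resonant reduction again produces an invertible $N=(I+M')^{-1}$ with $M'=(G_2')^\ast R(i\gw_{k_0},G_1')G_2'$ skew-adjoint, and solvability of the full system reduces to invertibility of $G_{2k_0}NG_{2k_0}^\ast$ on $Y_{k_0}$. This I establish by coercivity: skew-adjointness of $M'$ gives $\re\iprod{Ny}{y}=\norm{Ny}^2$, so $\re\iprod{G_{2k_0}NG_{2k_0}^\ast y}{y}=\norm{NG_{2k_0}^\ast y}^2\ge\norm{N\inv}^{-2}\norm{G_{2k_0}^\ast y}^2\ge\delta\norm{y}^2$, using that surjectivity of $G_{2k_0}$ makes $G_{2k_0}^\ast$ bounded below. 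This coercivity makes $G_{2k_0}NG_{2k_0}^\ast$ boundedly invertible; together with the injectivity above and the closedness of $A_0$, the closed graph theorem yields $i\gw_{k_0}\in\rho(A_0)$.

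Finally, for the norm identity I would assume each $G_{2k}$ boundedly invertible and compute $R(i\gw_{k_0},A_0)G_2$ directly. Writing $z=R(i\gw_{k_0},A_0)G_2u$, the resonant component of $(i\gw_{k_0}-A_0)z=G_2u$ reads $G_{2k_0}(G_2^\ast z)=G_{2k_0}u$, so $G_2^\ast z=u$; the off-resonant components then give $(i\gw_{k_0}-i\gw_k)z_k=0$, hence $z_k=0$ for $k\ne k_0$, whereupon $u=G_2^\ast z=G_{2k_0}^\ast z_{k_0}$ forces $z_{k_0}=(G_{2k_0}^\ast)\inv u$. Thus $R(i\gw_{k_0},A_0)G_2u$ is concentrated in the $k_0$-slot with value $(G_{2k_0}^\ast)\inv u$, and taking the supremum over $\norm{u}\le 1$ gives $\norm{R(i\gw_{k_0},A_0)G_2}=\norm{(G_{2k_0}^\ast)\inv}=\norm{G_{2k_0}\inv}$.
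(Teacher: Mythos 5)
Your proof is correct, and while the off-resonance case and the final norm identity follow essentially the paper's route, your treatment of the resonant points $i\gw_{k_0}\in\gs(G_1)$ is genuinely different. The paper handles resonance non-constructively: it shows by a contradiction argument (a normalized sequence $(z_k)$ with $\norm{(i\gw_n-G_1+G_2G_2^\ast)z_k}\to 0$, split along $\ker(i\gw_n-G_1)\oplus\ran(i\gw_n-G_1)$) that $i\gw_n-G_1+G_2G_2^\ast$ is bounded below, hence injective with closed range, and then invokes the Mean Ergodic Theorem to get density of the range. You instead solve $(i\gw_{k_0}-G_1+G_2G_2^\ast)z=w$ explicitly through the splitting $\ZI=Y_{k_0}\oplus\ZI'$, eliminating the off-resonant block by $R(i\gw_{k_0},G_1')$ and reducing to the Schur-complement operator $G_{2k_0}NG_{2k_0}^\ast$ on $Y_{k_0}$, whose invertibility you get from the coercivity identity $\re\iprod{Ny}{y}=\norm{Ny}^2$ (valid because $M'$ is skew-adjoint) together with the lower bound on $G_{2k_0}^\ast$ coming from surjectivity of $G_{2k_0}$. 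Each approach has its merits: the paper's is soft and short given the ergodic-theorem machinery; yours is constructive, avoids both the contradiction argument and the Mean Ergodic Theorem, and produces an explicit resolvent formula at the resonant frequencies, which in principle also yields quantitative bounds of the type the paper later needs in Theorem~\ref{thm:IMstabnonuniform} (where it runs a second, separate contradiction argument to estimate $\norm{R(i\gw,G_1-\BI\BI^\ast)}$). Two small points worth recording: your separate injectivity step on $i\R$ via dissipativity is subsumed in the paper by the lower-bound argument; and your appeal to absence of finite accumulation points is exactly what guarantees $i\gw_{k_0}\in\rho(G_1')$ with $\norm{R(i\gw_{k_0},G_1')}=1/\inf_{k\neq k_0}\abs{\gw_k-\gw_{k_0}}$, so that step is sound.
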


\begin{proof}
Since $G_1$  generates a contraction semigroup,
the same is true for 
$G_1-G_2 G_2^\ast$~\cite[Cor. III.2.9]{EngNag00book}, and 
$\gs(G_1-G_2G_2^\ast)\subset \overline{\C_-}$.  
The strong stability of the semigroup follows from the Arendt--Batty--Lyubich--V\~{u} Theorem~\cite{AreBat88,LyuVu88} once we show $i\R\subset \rho(G_1-G_2G_2^\ast)$.

  Let $i\gw\in i\R$ be such that $\gw\neq \gw_k$ for all $k\in\Z$. We have $i\gw\in \rho(G_1)$, and if $I+G_2^\ast R(i\gw,G_1)G_2 $ is boundedly invertible, then the Woodbury formula%
    \eq{
     R(i\gw,G_1-G_2 G_2^\ast)
    = R(i\gw,G_1) 
    [I - G_2(I+G_2^\ast R(i\gw,G_1)G_2)\inv G_2^\ast R(i\gw,G_1)]
    }
  implies that $i\gw-G_1+ G_2 G_2^\ast$
  has a bounded inverse. Since $ G_2^\ast R(i\gw, G_1)G_2$ is bounded and skew-adjoint, we have $1\in \rho(-G_2^\ast R(i\gw,G_1)G_2)$ and $i\gw\in \rho( G_1-G_2 G_2^\ast)$.

Assume now $i\gw=i\gw_n$ for some $n\in\Z$.
We will first show that
there exists $c>0$ such that $\norm{(i\gw_n-G_1 +  \BI\BI^\ast )z }\geq c \norm{z}$ for all $z\in \Dom( G_1-G_2G_2^\ast)$. If this is not true, there exists a sequence $(z_k)_{k\in\N}\subset \Dom( G_1-G_2G_2^\ast)$ such that $\norm{z_k}=1$ for all $k\in \N$ and $\norm{(i\gw_n-G_1 +  \BI\BI^\ast)z_k }\to 0$ as $k\to \infty$.
Since $i\gw_n-G_1$ is skew-adjoint, we have
\eq{
\MoveEqLeft \norm{(i\gw_n-G_1+G_2G_2^\ast )z_k}
\geq \abs{\re\iprod{(i\gw_n-G_1+G_2G_2^\ast )z_k}{z_k})}
= \norm{G_2^\ast z_k}^2,
}
and thus $\norm{G_2^\ast z_k} \to 0$ as $k\to \infty$. 
For every $k\in\N$ denote
$z_k=z_k^1+z_k^2$ where $z_k^1\in \ran(i\gw_n-G_1)$, $z_k^2\in\ker(i\gw_n-G_1)$, and $1=\norm{z_k}^2=\norm{z_k^1}^2+\norm{z_k^2}^2$.
There exists $c_1>0$ such that
$\norm{(i\gw_n-G_1)z_k^1}\geq c_1 \norm{z_k^1}$ for all $k\in\N$.
Thus
\eq{
 c_1\norm{z_k^1}
 &\leq
\norm{(i\gw_n-G_1)z_k}
\leq \norm{(i\gw_n-G_1+G_2G_2^\ast)z_k}+ \norm{G_2}\norm{G_2^\ast z_k}
\to 0 
}
as $k\to\infty$.
Moreover, $\norm{G_2^\ast z_k^2}\geq \norm{(G_{2n}^\ast)\pinv}\inv \norm{z_k^2}$, and 
\eq{
\norm{(G_{2n}^\ast)\pinv}\inv \norm{z_k^2}\leq 
\norm{G_2^\ast z_k^2}\leq  \norm{G_2^\ast z_k}+\norm{G_2^\ast z_k^1}\to 0
}
as $k\to\infty$.
We have now shown that
 $z_k^1\to 0$ and $z_k^2\to 0$, but this contradicts the assumption that $ \norm{z_k}^2 =1$ for all $k\in\N$, and 
 thus shows that $i\gw_n-G_1 +  \BI\BI^\ast$ is lower bounded. 
In particular we now have $i\gw_n\notin \gs_p(G_1+\BI\BI^\ast)$, and that the range of $G_1+\BI\BI^\ast$ is closed. Finally, the Mean Ergodic Theorem~\cite[Sec. 4.3]{AreBat01book} implies that the range of $G_1+\BI\BI^\ast$ is dense, and we thus have $i\gw_n\in\rho(G_1+\BI\BI^\ast)$.

The structure of $G_1$ and the assumption that the components $G_{2k}$ of $G_2$ are boundedly invertible imply that $\ker(G_2)=\set{0}$ and $\ran(i\gw_k-G_1)\oplus \ran(G_2)=\ZI$ for all $k\in\Z$.  Let $k\in\Z$ and $u\in U$, and denote $z=R(i\gw_k,G_1-G_2G_2^\ast)G_2u$. Then $\ran(i\gw_k-G_1)\cap \ran(G_2)=\set{0}$ and $\ker(G_2)=\set{0}$ imply
\eq{
(i\gw_k-G_1)z=G_2(u-G_2^\ast z)
\quad \Leftrightarrow \quad
\left\{
\begin{array}{l}
  (i\gw_k-G_1)z=0\\
  u=G_2^\ast z.
\end{array}
\right.
}
Thus $z=(z_l)_{l\in\Z}\in \ker(i\gw_k-G_1)$ is such that $z_l=0$ for all $l\neq k$, and $z_k=G_{2k}^{-\ast}u$
which further implies $\norm{z}_{\ZI}=\norm{G_{2k}^{-\ast}u}_{Y}$.
Since $u\in U$ was arbitrary, this implies
$\norm{R(i\gw_k,G_1-G_2G_2^\ast)G_2} = \norm{G_{2k}\inv}$.
\end{proof}

%\begin{proof}[Proof of Theorem~\textup{\ref{thm:ContrOneMain}}] 
\noindent\textit{Proof of Theorem~\textup{\ref{thm:ContrOneMain}}.} 
  We can complete the proof by showing that the controller $(\mc{G}_1,\mc{G}_2,K)$ constructed in Section~\ref{sec:ContrOne} has the following properties:
  \begin{itemize} 
     \item[\textup{(i)}] 
       The controller $(\mc{G}_1,\mc{G}_2,K)$ is regular and it satisfies the \Gconds~\eqref{eq:Gconds}.
\item[\textup{(ii)}] The operator 
$G_1+G_2(\CL H+DK_1)$ generates a strongly stable semigroup.
    \item[\textup{(iii)}] The semigroup generated by $A_e$ is strongly stable and $\gs(A_e)\subset \C_-$.
    \item[\textup{(iv)}] There exists $M_e\geq 0$ such that 
      \eq{
\MoveEqLeft      \norm{\RBm}\leq M_e 
\max \bigl\{\norm{E\phi_k},\norm{F\phi_k},
\\
&\hspace{1.2cm} \norm{(P_L(i\gw_k)K_{1k})\inv}(\norm{\CL R(i\gw_k,A+L_1\CL)B_d}\norm{E\phi_k}+\norm{F\phi_k})
\bigr\}.
      }
    \item[\textup{(v)}] $(\norm{R(i\gw_k,\Aetmo )\tilde{B}_e\phi_k})\in \lp[2](\C)$ if and only if~\eqref{eq:ContrOneEFcondpert} are satisfied.
\end{itemize}

The results in~\cite[Sec. 7]{Wei94} show that the controller $\PARcontr$ is a regular linear system on $Z=\ZI\times X$ since it is a part of the system obtained from 
\eq{
\left( \pmat{G_1&0\\0&A}, \pmat{G_2&0\\L&B}, \pmat{\KoL&-\KtL\\0&\CL\\0&\KtL}, \pmat{0&0\\0&D\\0&0} \right),
}
with admissible output feedback
$\hat{K} = \left[ {0\atop 0} {I\atop 0} {0\atop I} \right]$.
It is also straightforward to verify that for all $\gl\in\rho( \mc{G}_1)$
we have
\ieq{
R(\gl,\Gmo )\mc{G}_2 = 
R(\gl,\mc{G}_1^e)\mc{G}_2 
}
where $\mc{G}_1^e: \Dom(\mc{G}_1^e)\subset Z\to \ZI\times X_{-1}$ is the operator $\mc{G}_1$ with domain $\Dom(\mc{G}_1^e) = \Dom(G_1)\times \XBL$.

We will now show that $(\mc{G}_1,\mc{G}_2,K)$ satisfies the \Gconds.  The operator $G_2$ is of the form $G_2=(G_{2k})_{k\in\Z}$, and its components $G_{2k} = -(P_L(i\gw_k)K_{1k})^\ast$ are boundedly invertible for all $k\in \Z$.
This implies $\ker(G_2)=\set{0}$, and also further shows that $\ker(\mc{G}_2)=\set{0}$.
Let $n\in\Z$ be arbitrary and assume $(w,v)^T=(i\gw_n-\Gmo )(z,x)^T=\mc{G}_2y\in \ran(i\gw_n-\Gmo )\cap\ran(G_2)$ for some $(z,x)^T\in Z\times X$ and $y\in Y$. The 
property $R(\gl,\Gmo )\mc{G}_2 = 
R(\gl,\mc{G}_1^e)\mc{G}_2$
implies that $(z,x)\in \Dom(G_1)\times \XBL$ and
\eq{
\pmat{w\\v}
=\pmat{(i\gw_n-G_1)z -G_2C_Kx\\ (i\gw_n- \Amo -B\KtL -LC_K )x}  = \pmat{G_2y\\Ly}
}
where $C_K = \CL +D\KtL$. The first line implies $(i\gw_n-G_1)z=G_2(C_Kx+y)$, which means that in particular
$G_{2n}(C_Kx+y)=(i\gw_n-i\gw_n)z_n=0$.  
Since $G_{2n}$ is injective, we must have
$C_Kx+y=0$.
Using this, the second line above implies
$(i\gw_n-\Amo -B\KtL)x = L(C_Kx+y)=0$, and  since $i\gw_n\in \rho( (\Amo +B\KtL)\vert_X)$, we must have $x=0$. Using this we get $y=-C_Kx=0$, and thus also $(w,v)^T=\mc{G}_2y=0$. This concludes that $\ran(i\gw_n-\Gmo )\cap \ran(\mc{G}_2)=\set{0}$. Since $n\in\Z$ was arbitrary, 
the \Gconds\ are satisfied.

The system $(A+L_1\CL,B+L_1D,\CL,D)$ is exponentially stable and regular~\cite[Sec. 7]{Wei94} and
$R(\gl,\Amo +L_1\CL)\ran(B+L_1D)\subset X_B$ by~\cite[Prop. 6.6]{Wei94}.
There exists $M\geq 0$ such that $ \norm{P_L(i\gw_k)}\leq M$ and 
$\norm{R(i\gw_k,\Amo +L_1\CL)(B+L_1D)}\leq M$ for all $k\in\Z$,
and thus $G_2\in \Lin(Y,\ZI)$ and $H\in \Lin(\ZI,X)$.

Denote $A_L=\Amo +L_1\CL$ and $B_L=B+L_1D$.
If $z\in Z$ and $\gl>0$
then analogously as in the proof of Lemma~\ref{lem:Sylsolvabilitycond} we have
\ieq{
\gl C R(\gl,A_L)Hz 
\to
 \sum_{k\in\Z}  \CL R(i\gw_k,A_L)B_LK_{1k}z_k
}
as $\gl\to \infty$ since $(A_L,B_L,C)$ is regular and since $(K_{1k}z_k)_{k\in\Z}\in \lp[2](U)$. Thus $\ran(H)\subset \Dom(\CL)$, and similarly $\ran(H)\subset \Dom(\KtL)$.
These properties imply that we can define $C_1=\CL H+DK_1\in \Lin(\ZI,Y)$ and 
\eq{
\CL Hz + DK_1 z  
&=\sum_{k\in\Z}  (\CL R(i\gw_k,A_L)B_L + D)K_{1k}z_k 
=\sum_{k\in\Z}  P_L(i\gw_k)K_{1k}z_k .
}
Thus $G_2 = -C_1^\ast$.
The fact that $K_{1k}$ were chosen so that $P_L(i\gw_k)K_{1k}$ are boundedly invertible imply that the components $G_{2k}$ of $G_2$ are boundedly invertible for all $k\in \Z$.
  We thus have from Lemma~\ref{lem:G1fbstab}
that the semigroup generated by $G_1+G_2C_1=G_1-G_2 G_2^\ast$ is strongly stable, $i\R\subset \rho(G_1+G_2C_1)$ and $\norm{R(i\gw_k,G_1+G_2C_1)G_2}=\norm{(P_L(i\gw_k)K_{1k})\inv}$ for all $k\in\Z$.

We will now show that the closed-loop system is strongly stable and $i\R\subset \rho(A_e)$. 
With the chosen controller $(\mc{G}_1,\mc{G}_2,K)$ 
the operator $A_e$ becomes
\eq{
A_{e}
=\pmat{\Amo &BK_1 &-B\KtL \\G_2\CL &G_1+G_2DK_1&G_2\CL \\L\CL &LDK_1 &\Amo +BK_2+L\CL }
}
with domain $\Dom(A_e)$ equal to
\eq{
\Dom(A_e) = 
\Biggl\{ 
\pmat{x\\z_1\\x_1} &\in
\XB\times \Dom(G_1)\times \XBL
~\Biggm|~\\
&\biggl\{
\begin{array}{l}
  \Amo x+BK_1z_1-B\KtL x_1\in X\\
  (\Amo +B\KtL + L\CL)x_1 + L\CL x + LDK_1 z_1\in X
\end{array}
\Biggr\}.
} 
If we choose a similarity transform $Q_{e}\in \Lin(X\times \ZI\times X)$ 
\eq{
Q_{e}
=\mbox{\small$\displaystyle\pmat{I&0&0\\0&I&0\\-I&H&-I}$}
=  Q_{e}\inv ,
}
we can define $\hat{A}_e = Q_{e}A_{e}Q_{e}\inv$ on $X\times \ZI\times X$. 
The operator $H$ is the unique solution of the Sylvester equation $HG_1 =(\Amo +L_1\CL )H+(B+L_1D)K_1$.
Analogously as in the proof of~\cite[Thm. 12]{Pau16a} we can see that
\eq{
\Dom(\hat{A}_e) 
= \Biggl\{ \pmat{x\\z_1\\x_1} &\in  \XB\times \Dom(G_1)\times \XL
~\Biggm|~\\
&\biggl\{
\begin{array}{l}
  (\Amo +B\KtL)x+B(K_1-\KtL H)z_1+B\KtL x_1\in X\\
(\Amo + L_1\CL)x_1   \in X
\end{array}
\Biggr\}
} 
where we have denoted $\XL = \Dom(A)+\ran(R(\gl_0,\Amo )L) =\Dom(A)+\ran(R(\gl_0,\Amo )L_1)$.
For $x_e=(x,z_1,x_1)^T\in  \Dom(\hat{A}_e)$ 
a direct computation using $L=L_1+HG_2$, $C_1=\CL H+DK_1$, and $HG_1z_1 =(\Amo +L_1\CL )Hz_1+(B+L_1D)K_1z_1$ yields
\eq{
\hat{A}_{e}x_e
&=\pmat{(\Amo  + B\KtL )x +B(K_1 -\KtL H)z_1 +B\KtL x_1\\(G_1+G_2(\CL H+DK_1))z_1 - G_2\CL  x_1\\ (\Amo  +L_1\CL)  x_1}\\
&=\pmat{\Amo  + B\KtL  & B(K_1 -\KtL H) &B\KtL  \\0& G_1+G_2C_1 &- G_2\CL  \\ 0&0& \Amo  +L_1\CL} \pmat{x\\z_1\\   x_1}.
}
Since $G_1+G_2C_1$ is strongly stable and $i\R\subset \rho(G_1+G_2C_1)$, and since
$(\Amo +B\KtL )\vert_X$ and $(A+L_1\CL)\vert_X $  generate exponentially stable semigroups and $\KtL H\in \Lin(\ZI,U)$, 
the semigroup generated by $\hat{A}_e$ is strongly stable and $i\R\subset \rho(\hat{A}_e)$.
Due to similarity, the same is true for $A_e$, and thus the closed-loop system is strongly stable.

To guarantee the solvability of the Sylvester equation $\Sigma S = \Aemo \Sigma + B_e$ we need to estimate $\norm{R(i\gw_k,\Aemo )B_e\phi_k}$ for $k\in\Z$. By Theorem~\ref{thm:CLregularity} we have $R(i\gw_k,\Aemo )B_e\phi_k=R(i\gw_k,A_e^e)B_e\phi_k$ where $A_e^e$ is the operator $A_e$ with the domain $\Dom(A_e^e) = \XBBd\times \ZG = \XBBd \times \Dom(G_1)\times \XBL$. 
This further implies that for any $k\in\Z$ the element $x_e=R(i\gw_k,\Aemo )B_e\phi_k$ is 
obtained with $x_e=Q_e\inv \hat{x}_e$ from the solution of the triangular system 
\ieq{
(i\gw_k-\hat{A}_e^e)\hat{x}_e = Q_eB_e\phi_k
}
 where $\hat{A}_e^e$ is the operator $\hat{A}_e$ with domain $\Dom(\hat{A}_e^e)=\XBBd\times \Dom(G_1)\times \Dom(\CL)$.
A direct estimate
using exponential stability of $(\Amo +B\KtL)\vert_X$ and $(\Amo +L_1\CL)\vert_X$ and the admissibility properties of the operators $B$, $B_d$, $C$, $L_1$, and $K_2$, 
shows that for all $k\in\Z$ we have
\eq{
\MoveEqLeft \norm{R(i\gw_k,\hat{A}_e^e)Q_eB_e\phi_k}
\lesssim \max \bigl\{
\norm{E\phi_k},\norm{F\phi_k},\\
&\hspace{1.2cm} \norm{R(i\gw_k,G_1+G_2C_1)G_2}(\norm{\CL R(i\gw_k,A_L)B_d}\norm{E\phi_k}+\norm{F\phi_k}) 
\bigr\}.
}
The condition~\eqref{eq:ContrOneEFcond} for the solvability of the robust output regulation problem now follows from
 $\norm{R(i\gw_k,G_1+G_2C_1)G_2}=\norm{(P_L(i\gw_k)K_{1k})\inv}$.

 Finally, we will show that under the additional assumptions on the perturbations the conditions~\eqref{eq:ContrTwoEFcondpert} are equivalent to $(\norm{R(i\gw_k,\Aetmo )\tilde{B}_e\phi_k})_{\abs{k}\geq N}\in \lp[2](\C)$. Due to the assumption $\set{i\gw_k}_{k\in\Z}\subset \rho(\tilde{A}_e)$ this is further equivalent to $(\norm{R(i\gw_k,\Aetmo )\tilde{B}_e\phi_k})_{k\in\Z}\in \lp[2](\C)$. 
Let $k\in\Z$ be such that $\abs{k}\geq N$.
  We begin by characterizing $\ker(i\gw_k-\mc{G}_1)$. Let $z_k = (z_1^k,x_1^k)^T\in \ker(i\gw_k-\mc{G}_1)$ for some $k\in\Z$ and denote $C_K = \CL + D\KtL$. Then using the fact that $(G_1,G_2)$ satisfy the \Gconds\ in Definition~\ref{def:Gconds} we get
\eq{
\pmat{(i\gw_k-G_1)z_1^k - G_2C_K x_1^k \\(i\gw_k-\Amo -B\KtL)x_1^k -LC_Kx_1^k} = \pmat{0\\0} 
\quad
\Leftrightarrow
\quad
\left\{
\begin{array}{l}
  C_K x_1^k = 0\\
  (i\gw_k-G_1)z_1^k=0\\
  (i\gw_k-\Amo -B\KtL)x_1^k=0
\end{array}
\right.
}
and since $i\gw_k\in \rho( (\Amo +B\KtL)\vert_X)$, we have $z_k = (z_1^k,0)^T$ where $z_1^k\in \ker(i\gw_k-G_1)$. This immediately implies that the restriction of the operator $\tilde{P}(i\gw_k)K$ to the subspace $\ker(i\gw_k-\mc{G}_1)$ is given by
$(\tilde{P}(i\gw_k)K)\vert_{\ker(i\gw_k-\mc{G}_1)} = \tilde{P}(i\gw_k)K_{1k}$ and it is boundedly invertible by assumption.
If we denote
$\tilde{y}_k = \tilde{P}_d(i\gw_k)\tilde{E}\phi_k + \tilde{F}\phi_k$,  Lemma~\ref{lem:RBeformgeneral} implies
\eq{
R(i\gw_k,\Aetmo )\tilde{B}_e\phi_k
  &= \pmat{
  R(i\gw_k,\tilde{A})(
  \tilde{B}_d\tilde{E}\phi_k-
  \tilde{B}K_{1k}(\tilde{P}(i\gw_k)K_{1k})\inv \tilde{y}_k)
  \\\tilde{z}_k\\0
}
}
where $\tilde{z}_k=(\tilde{z}_k^l)_{l\in\Z}\in \ZI$ is such that $\tilde{z}_k^k =-(\tilde{P}(i\gw_k)K_{1k})\inv \tilde{y}_k$ and $\tilde{z}_k^l=0$ for all $l\neq k$.
This immediately implies that $(\norm{R(i\gw_k,\Aetmo )\tilde{B}_e\phi_k})_{\abs{k}\geq N}\in \lp[2](\C)$ if and only if~\eqref{eq:ContrOneEFcondpert} are satisfied.
\hfill$\square$

\subsection{Controller with a Reduced Order Internal Model}
\label{sec:ContrROIM}

In this section we modify the internal model in the controller in Section~\ref{sec:ContrOne} to design a controller that is robust with respect to a predefined class $\Ops_0\subset \Ops$ of perturbations. 
In this section we assume that $\set{i\gw_k}_{k\in\Z}\subset \rho(A)$ and $P(i\gw_k)$ are boundedly invertible for all $k\in\Z$.
In particular we then have that either both $U$ and $Y$ are infinite-dimensional, or $\dim Y=\dim U$.
The class of admissible perturbations $\Ops_0\subset \Ops$ 
may be chosen freely, but it is assumed that all its perturbations
$\PARsysopspert\in\Ops_0$
are such that
$i\gw_k\in \rho(\tilde{A})$ and $\tilde{P}(i\gw_k)$ is boundedly invertible for all $k\in \Z$.
For such $\Ops_0$ the construction of the reduced order internal model begins by defining
\eq{
\mc{S}_k = \Span \Bigl\{\tilde{P}(i\gw_k)\inv & (\tilde{P}_d(i\gw_k)\tilde{E}\phi_k+\tilde{F}\phi_k) \Bigm| 
\PARsysopspert\in \Ops_0 \Bigr\} \subset U
}
and $p_k = \dim \mc{S}_k$ for $k\in \Z$.
 The number of copies of each frequency $i\gw_k$
 of the exosystem that we include in the 
 reduced order internal model  is equal to~$p_k$.
Let
\eq{
\ZI = \Setm{(z_k)_{k\in\Z}}{z_k\in Y_k \quad \forall k \quad \mbox{and} \quad \sum_{k\in\Z}\norm{z_k}^2<\infty}
}
where $Y_k = \C^{p_k}$ if $p_k<\dim Y$ and $Y_k=Y$ if $p_k=\dim Y$ or $p_k=\infty$. Define $G_1: \Dom(G_1)\subset \ZI\to \ZI$ by
\eq{
G_1 = \diag \bigl( i\gw_k I_{Y_k} \bigr)_{k\in\Z}, 
\quad 
\Dom(G_1) = \Setm{(z_k)_{k\in\Z}\in \ZI}{\sum_{k\in\Z}\abs{\gw_k}^2 \norm{z_k}^2<\infty},
}
and choose $K_1\in \Lin(\ZI,U)$ 
such
that $K_1  =( K_{1k})_{k\in\Z}$
with
\eq{
K_{1k} = 
\left\{
\begin{array}{ll}
  \gg_k
Q_k
  \in \Lin(\C^{p_k},U) &\mbox{if}~~ p_k<\dim Y\\[1.5ex]
  \displaystyle \gg_k\frac{P(i\gw_k)\inv}{\norm{P(i\gw_k)\inv}}
  \in \Lin(Y,U) &\mbox{if}~~ p_k=\dim Y ~ \mbox{or} ~ p_k=\infty.
\end{array}
\right.
}
Here
$Q_k = [u_k^1,\ldots,u_k^{p_k}]\in \Lin(\C^{p_k},U)$
where $\set{u_k^l}_{l=1}^{p_k}\subset U$ are bases of the subspaces $\mc{S}_k$ normalized in such a way that 
$\sup_k\norm{Q_k}_{\Lin(\C^{p_k},U)}<\infty$, 
and 
$(\gg_k)_{k\in\Z}\in \lp[2](\C)$ with $\gg_k> 0$. We then have that $(\norm{K_{1k}})_{k\in\Z}\in \lp[2](\C)$ and thus
 $K_1\in \Lin(\ZI,U)$.
Finally, we define
  \ieq{
  G_2 = (-(P_L(i\gw_k)K_{1k})^\ast)_{k\in\Z} \in \Lin(Y,\ZI).
  }
  The structure and the rest of the parameters of $(\mc{G}_1,\mc{G}_2,K)$ are chosen as in the beginning of Section~\textup{\ref{sec:ContrOne}}.

\begin{thm}
  \label{thm:ContrOneROIM}
  Assume
$P(i\gw_k)$ are invertible for all $k\in \Z$. 
Assume further that $E\in \Lin(W,\Uw)$ and $F\in \Lin(W,Y)$
satisfy%
\begin{subequations}%
  \label{eq:ContrOneROIMEFcond}
  \eqn{
  &\left( \norm{R(i\gw_k,\Amo )\left( B_dE\phi_k- BP(i\gw_k)\inv y_k \right)} \right)_{k\in\Z} \in \lp[2](\C)\\
  &\left(\gg_k\inv \norm{Q_k\pinv P(i\gw_k)\inv y_k}\right)_{k\in \Jinds} \in \lp[2](\C) \\
  &\left(\gg_k\inv \norm{P(i\gw_k)\inv}\norm{ y_k}\right)_{k\in\Z\setminus \Jinds} \in \lp[2](\C) 
  } 
\end{subequations}%
where
 $y_k = P_d(i\gw_k)E\phi_k + F\phi_k$ and $J\subset \Z$ is the set of indices for which $Y_k\neq Y$.
  Then the controller 
  solves the robust output regulation problem 
for the class $\Ops_0$ of perturbations.

In particular, the controller is robust with respect to all perturbations in~$\Ops_0$ 
for which the strong closed-loop stability is preserved, $\set{i\gw_k}_{k\in\Z}\subset\rho(\tilde{A}_e)$, and%
\begin{subequations}% 
  \label{eq:ContrOneROIMEFcondpert}
  \eqn{
  &\left( \norm{R(i\gw_k,\Atmo )\left( \tilde{B}_d\tilde{E}\phi_k- \tilde{B}\tilde{P}(i\gw_k)\inv \tilde{y}_k \right)} \right)_{k\in\Z} \in \lp[2](\C)\\
  &\left(\gg_k\inv \norm{Q_k\pinv \tilde{P}(i\gw_k)\inv \tilde{y}_k}\right)_{k\in \Jinds} \in \lp[2](\C) \\
  &\left(\gg_k\inv \norm{P(i\gw_k)\inv}\norm{P(i\gw_k)\tilde{P}(i\gw_k)\inv \tilde{y}_k}\right)_{k\in\Z\setminus \Jinds} \in \lp[2](\C) 
  } 
\end{subequations}%
where
 $\tilde{y}_k = \tilde{P}_d(i\gw_k)\tilde{E}\phi_k + \tilde{F}\phi_k$.
\end{thm}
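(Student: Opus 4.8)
The plan is to follow the five-part scheme used in the proof of Theorem~\ref{thm:ContrOneMain}, since the reduced-order controller has exactly the same block structure as the one in Section~\ref{sec:ContrOne}; only the internal model $G_1$, the operators $K_1$, $G_2$, and the block spaces $Y_k$ are changed. Regularity of $(\mc{G}_1,\mc{G}_2,K)$ and the direct resolvent formula $R(\gl,\Gmo)\mc{G}_2=R(\gl,\mc{G}_1^e)\mc{G}_2$ are obtained verbatim from~\cite[Sec.~7]{Wei94} as before. The decisive structural difference is that with fewer than $\dim Y$ copies of some frequencies the controller no longer satisfies the \Gconds\ of Definition~\ref{def:Gconds}: when $Y_k\neq Y$ the component $G_{2k}$ fails to be injective, so~\eqref{eq:Gconds2} can break down, and consequently neither the internal model principle (Theorem~\ref{thm:IMP}) nor Lemma~\ref{lem:RBeformgeneral} is available. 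Robustness must therefore be established directly on the class $\Ops_0$, using the standing assumptions $\set{i\gw_k}\kZ\subset\rho(A)$ and the invertibility of $P(i\gw_k)$.

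For closed-loop stability I would first invoke Lemma~\ref{lem:G1fbstab} to show that $G_1-G_2G_2^\ast=G_1+G_2(\CL H+DK_1)$ generates a strongly stable semigroup with $i\R\subset\rho(G_1-G_2G_2^\ast)$. Its hypotheses hold even on the reduced blocks: since $P(i\gw_k)$ is boundedly invertible and $K_{1k}$ is injective with range $\mc{S}_k$, the operator $P_L(i\gw_k)K_{1k}$ is injective, so $G_{2k}=-(P_L(i\gw_k)K_{1k})^\ast$ is surjective, and $G_{2k}^\ast$ has finite-dimensional (hence closed) range when $Y_k=\C^{p_k}$ and is boundedly invertible when $Y_k=Y$. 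With this in hand the similarity transform $Q_e$ and the resulting upper-triangular form of $\hat{A}_e$ from the proof of Theorem~\ref{thm:ContrOneMain} carry over unchanged, the operator $H$ still lying in $\Lin(\ZI,X)$ because $(\gg_k)\kZ\in\lp[2](\C)$; strong stability of $A_e$ and $i\R\subset\rho(A_e)$ then follow from exponential stability of the two diagonal plant blocks and the stability of the middle block.

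The heart of the argument is to replace Lemma~\ref{lem:RBeformgeneral} by a direct computation of the resolvent. For $i\gw_k\in\rho(\tilde A)\cap\rho(\tilde A_e)$ I would substitute the ansatz $x_e=(x_k,\eta_k,0)$, with $\eta_k$ supported on the $k$-th block and $x_k=R(i\gw_k,\Atmo)(\tilde B K_{1k}\eta_k+\tilde B_d\tilde E\phi_k)$, into the three defining equations of $(i\gw_k-\Aetmo)x_e=\tilde B_e\phi_k$ supplied by Theorem~\ref{thm:CLregularity}. One checks that the first equation fixes $x_k$ while the remaining two collapse to $G_2(\CLt x_k+\tilde D K_{1k}\eta_k+\tilde F\phi_k)=0$ and $L(\cdots)=0$; both hold because $\CLt x_k+\tilde D K_{1k}\eta_k+\tilde F\phi_k=\tilde P(i\gw_k)K_{1k}\eta_k+\tilde y_k$ with $\tilde y_k=\tilde P_d(i\gw_k)\tilde E\phi_k+\tilde F\phi_k$, and the defining property of $\mc{S}_k$ guarantees, for every perturbation in $\Ops_0$, that $\tilde P(i\gw_k)\inv\tilde y_k\in\mc{S}_k=\ran(K_{1k})$, so that the block equation $\tilde P(i\gw_k)K_{1k}\eta_k=-\tilde y_k$ has a unique solution $\eta_k\in Y_k$. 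Here the $\mc{S}_k$-solvability plays exactly the role the \Gconds\ played in Lemma~\ref{lem:RBeformgeneral}. Splitting into $k\in J$, where $K_{1k}=\gg_k Q_k$ gives $\eta_k=-\gg_k\inv Q_k\pinv\tilde P(i\gw_k)\inv\tilde y_k$, and $k\in\Z\setminus J$, where $K_{1k}$ is invertible, the norm $\norm{R(i\gw_k,\Aetmo)\tilde B_e\phi_k}^2=\norm{x_k}^2+\norm{\eta_k}^2$ is controlled by the three sequences in~\eqref{eq:ContrOneROIMEFcondpert} (and, for the nominal plant, in~\eqref{eq:ContrOneROIMEFcond}).

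Finally I would assemble the conclusion. For the nominal plant the same substitution shows $(\norm{R(i\gw_k,\Aemo)B_e\phi_k})\kZ\in\lp[2](\C)$ under~\eqref{eq:ContrOneROIMEFcond}, so Lemma~\ref{lem:Sylsolvabilitycond} produces the Sylvester solution $\Sigma$ with $\ran(\Sigma)\subset\Dom(\CeL)$, while the relation $\CeL\Sigma\phi_k+D_e\phi_k=P(i\gw_k)K_{1k}\eta_k+y_k=0$ obtained in the computation supplies the regulation constraint $\CeL\Sigma+D_e=0$; Theorem~\ref{thm:ORP} then shows that the controller solves the output regulation problem. For each perturbation in $\Ops_0$ whose perturbed closed loop is strongly stable and satisfies $\set{i\gw_k}\kZ\subset\rho(\tilde A_e)$, the conditions~\eqref{eq:ContrOneROIMEFcondpert} yield the perturbed Sylvester solution through Lemma~\ref{lem:Sylsolvabilitycond}, and the explicit form gives $\CeLt\tilde\Sigma+\tilde D_e=0$ in the same way; Theorem~\ref{thm:RORPchar}, or a second application of Theorem~\ref{thm:ORP} to the perturbed closed-loop system, then certifies robustness on the class $\Ops_0$. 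The main obstacle is precisely the loss of the \Gconds: everything rests on showing that the reduced internal model still solves the block equations~\eqref{eq:RORPchareqns} for all of $\Ops_0$, which is where the construction of the subspaces $\mc{S}_k$ enters, and on pushing Lemma~\ref{lem:G1fbstab} and the resolvent estimate through the non-invertible blocks $k\in J$.
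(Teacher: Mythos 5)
Your proposal is correct and follows essentially the same route as the paper's proof: stability via Lemma~\ref{lem:G1fbstab} (with the reduced components $G_{2k}$ surjective of finite rank) and the triangular similarity transform from Theorem~\ref{thm:ContrOneMain}, then solvability of the block equations~\eqref{eq:RORPchareqns} for every perturbation in $\Ops_0$ via $\tilde{P}(i\gw_k)\inv\tilde{y}_k\in\mc{S}_k=\ran(Q_k)$, and finally the explicit construction of the Sylvester/regulator solution supported on the $k$-th blocks with the $k\in J$ versus $k\in\Z\setminus J$ estimates yielding~\eqref{eq:ContrOneROIMEFcondpert}, concluded through Theorems~\ref{thm:ORP} and~\ref{thm:RORPchar}. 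Your ``direct resolvent computation'' replacing Lemma~\ref{lem:RBeformgeneral} is exactly the paper's explicit construction of $\Sigma\phi_k=(\Pi\phi_k,\Gamma\phi_k)^T$ in different notation, so there is no substantive difference.
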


\begin{proof}
  The strong stability of the closed-loop system and the property $i\R\subset \rho(A_e)$ can be verified similarly as in the proof of Theorem~\ref{thm:ContrOneMain} since we again have $C_1 = \CL H+DK_1 = -G_2^\ast$
and the components $G_{2k}\in \Lin(Y,Y_k)$ of $G_2$ are either boundedly invertible or surjective finite rank operators.

Let $\PARsysopspert\in \Ops_0$ and $k\in \Z$ be arbitrary and denote
$\tilde{y}_k  = \tilde{P}_d(i\gw_k)\tilde{E}e_k +\tilde{F}e_k$.
We begin by showing that
the equations~\eqref{eq:RORPchareqns} in Theorem~\ref{thm:RORPchar} have a solution for every $k\in\Z$, i.e.,
  there exists $z_k \in  \ker(i\gw_k-\mc{G}_1)$ such that
    \ieq{
    \tilde{P}(i\gw_k)\KL z_k = -\tilde{y}_k .
    }
    If $k\in\Z$ is such that $Y_k = Y$, we can choose $z_k=\left[ z_{1k}\atop 0 \right]$ with $z_{1k}=(z_{1k}^l)_{l\in\Z}\in \ZI$ such that $z_{1k}^l=0$ for $l\neq k$ and $z_{1k}^k = -\frac{1}{\gg_k}\norm{P(i\gw_k)\inv} P(i\gw_k)\tilde{P}(i\gw_k)\inv \tilde{y}_k $. Then clearly $\left[ z_{1k}\atop 0 \right]\in \ker(i\gw_k-\mc{G}_1)$ and 
    \eq{
    \tilde{P}(i\gw_k)\KL \pmat{z_{1k}\\0} =  \tilde{P}(i\gw_k)K_{1k}z_{1k}
    = -\tilde{P}(i\gw_k)\tilde{P}(i\gw_k)\inv \tilde{y}_k =-\tilde{y}_k .
    }
It remains to consider the situation
$Y_k\neq Y$.
We have $\tilde{P}(i\gw_k)\inv \tilde{y}_k \in \mc{S}_k = \ran(Q_k)$ by definition. Choose $z_k = \left[ z_{1k}\atop 0 \right]$ with $z_{1k}=(z_{1k}^l)_{l\in\Z}$ such that $z_{1k}^l=0$ for all $l\neq k$ and $z_{1k}^k = - \frac{1}{\gg_k} Q_k\pinv \tilde{P}(i\gw_k)\inv \tilde{y}_k  \in Y_k = \C^{p_k}$.
Then clearly
$\left[ z_{1k}\atop 0 \right]\in \ker(i\gw_k-\mc{G}_1)$ and
  \eq{
  \tilde{P}(i\gw_k) \KL \pmat{z_{1k}\\0} 
  = \tilde{P}(i\gw_k) K_{1k} z_{1k}^k
  =-\tilde{P}(i\gw_k) Q_k Q_k\pinv \tilde{P}(i\gw_k)\inv \tilde{y}_k 
=-\tilde{y}_k .
  }
  Thus the equations~\eqref{eq:RORPchareqns} in Theorem~\ref{thm:RORPchar} have a solution for all $k\in\Z$.

  To prove that the Sylvester equation $\Sigma S = \Aetmo \Sigma + \tilde{B}_e$ has a solutions satisfying $\ran(\Sigma)\subset \Dom(\CeLt)$ whenever~\eqref{eq:ContrOneROIMEFcondpert} holds, we will construct the solution $\Sigma = (\Pi,\Gamma)^T$ explicitly. Let $k\in\Z$, choose $z_k$ as above and define $\Pi \phi_k = R(i\gw_k,\Atmo)(\tilde{B}\KL z_k + \tilde{B}_d\tilde{E}\phi_k)$ and $\Gamma \phi_k =z_k$. 
  Then the properties $\Gamma \phi_k = z_k\in \ker(i\gw_k-\mc{G}_1)$ and $\tilde{P}(i\gw_k)\KL z_k = -\tilde{y}_k  = -\tilde{P}_d(i\gw_k) \tilde{E}\phi_k - \tilde{F}\phi_k$ imply
  \eq{
  (i\gw_k-\tilde{A}_e) \Sigma\phi_k
  &= \pmat{(i\gw_k-\tilde{A})\Pi\phi_k - \tilde{B}\KL\Gamma \phi_k \\
  (i\gw_k-\mc{G}_1)\Gamma \phi_k - \mc{G}_2(\CLt\Pi\phi_k + \tilde{D}\KL\Gamma \phi_k)}
  = \tilde{B}_e\phi_k,
  }
  or $\Sigma S\phi_k = \tilde{A}_e\Sigma \phi_k+ \tilde{B}_e\phi_k$. By Lemma~\ref{lem:Sylsolvabilitycond} we have $\Sigma \in \Lin(W,X_e)$ and $\ran(\Sigma )\subset \Dom(\CeLt)$ if $(\norm{\Sigma\phi_k})_{k\in\Z}=(\norm{R(i\gw_k,\tilde{A}_e)\tilde{B}_e\phi_k})_{k\in\Z}\in\lp[2](\C)$.
If $k\in\Z$ is such that $ Y_k = Y$, then 
\eq{
\norm{\Sigma \phi_k}
&\lesssim \norm{\Pi \phi_k} + \norm{\Gamma \phi_k}
= \norm{R(i\gw_k,\tilde{A})(\tilde{B} K_{1k}z_{1k}^k + \tilde{B}_d\tilde{E}\phi_k)}  + \norm{z_{1k}^k}\\
&= \norm{ R(i\gw_k,\tilde{A})(\tilde{B}_d\tilde{E}\phi_k - \tilde{B} \tilde{P}(i\gw_k)\inv \tilde{y}_k ) }  + \frac{  \norm{P(i\gw_k) \tilde{P}(i\gw_k)\inv \tilde{y}_k  }}{\gg_k \norm{P(i\gw_k)\inv}\inv}
} 
On the other hand, if $k\in\Z$ is such that $ Y_k = \C^{p_k}$, then 
\eq{
\MoveEqLeft \norm{\Sigma \phi_k}
\lesssim
 \norm{R(i\gw_k,\tilde{A})(\tilde{B} K_{1k}z_{1k}^k + \tilde{B}_d\tilde{E}\phi_k)}  + \norm{z_{1k}^k}\\
 &=\norm{ R(i\gw_k,\tilde{A})(\tilde{B}_d\tilde{E}\phi_k - \tilde{B} \tilde{P}(i\gw_k)\inv \tilde{y}_k ) 
 }  + \frac{\norm{Q_k\pinv \tilde{P}(i\gw_k)\inv \tilde{y}_k }}{\gg_k}.
}
Thus $(\norm{\Sigma\phi_k})_{k\in\Z}\in\lp[2](\C)$ is satisfied if~\eqref{eq:ContrOneROIMEFcondpert} are satisfied. In the case of the nominal plant the conditions~\eqref{eq:ContrOneROIMEFcondpert} reduce to~\eqref{eq:ContrOneROIMEFcond}.  
\end{proof}

\subsection{A New Controller Structure for Output Regulation}
\label{sec:ContrOneORP}

We can also use the new controller structure to achieve output tracking and disturbance rejection in the situation where the controller is not required to be robust.
In this section we assume $\set{i\gw_k}_{k\in\Z}\subset \rho(A)$, but it is not required that Assumption~\ref{ass:PKsurj} on the surjectivity of the transfer functions is satisfied. Instead, we only need to assume that 
$y_k\in \ran(P(i\gw_k))$ for all $k\in\Z$, where $y_k = P_d(i\gw_k)E\phi_k + F\phi_k$.

Let $\mc{G}_1$ to have the same general structure as in Section~\ref{sec:ContrOne}. Choose $\ZI = W = \lp[2](\C)$ and $G_1 = S = \diag(i\gw_k)_{k\in\Z}$, and choose $K_1 
= (K_{1k})_{k\in\Z}
: \ZI\to U$ 
with
$K_{1k}=\gg_k \frac{u_k}{\norm{u_k}}$ 
where
$(\gg_k)_{k\in\Z}\in \lp[2](\C)$ with $\gg_k\neq 0$ and
where $u_k\in U$ are 
such that
\eq{
\left\{
\begin{array}{ll}
  P(i\gw_k)u_k =y_k \qquad & \mbox{if} \quad y_k\neq 0\\[1ex]
  u_k \notin \ker(P(i\gw_k))  \quad\qquad & \mbox{if} \quad y_k= 0.
\end{array}
\right.
}
where $y_k = P_d(i\gw_k)E\phi_k + F\phi_k$.
If $P(i\gw_k)$ has a closed range, we can in particular choose $u_k=P(i\gw_k)\pinv y_k$.
We have $\left( \norm{K_{1k}} \right)_{k\in\Z} \in \lp[2](\C)$ and thus $K_1\in \Lin(\ZI,U)$.
We define
  \ieq{
  G_2 = (-(P_L(i\gw_k)K_{1k})^\ast)_{k\in\Z} \in \Lin(Y,\ZI).
  }
  The rest of the parameters of $(\mc{G}_1,\mc{G}_2,K)$ are chosen as in the beginning of Section~\textup{\ref{sec:ContrOne}}.

\begin{thm}
  \label{thm:ContrOneORP}
  Assume
$E\in \Lin(W,\Uw)$ and $F\in \Lin(W,Y)$ satisfy%
\begin{subequations}%
  \label{eq:ContrOneORPEFcond}
  \eqn{
  &\left( \norm{ R(i\gw_k,A)B_dE\phi_k - R(i\gw_k,A)Bu_k } \right)_{k\in J} \in \lp[2](\C)\\
  &\left(\gg_k\inv \norm{u_k}\right)_{k\in J} \in \lp[2](\C) \qquad \mbox{and} \qquad 
   \left( \norm{R(i\gw_k,A)B_dE\phi_k} \right)_{k\in\Z\setminus J} \in \lp[2](\C)
  } 
\end{subequations}%
  where $y_k = P_d(i\gw_k)E\phi_k + F\phi_k$ and where $J\subset \Z$ is the set of indices for which $y_k\neq 0$.
  Then the controller 
solves the output regulation problem.
\end{thm}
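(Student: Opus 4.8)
The plan is to deduce the result from Theorem~\ref{thm:ORP}, for which I must supply two ingredients: strong stability of the closed-loop semigroup $T_e(t)$ together with $i\R\subset\rho(A_e)$, and a solution $\Sigma\in\Lin(W,X_e)$ of the regulator equations~\eqref{eq:regeqns} satisfying $\ran(\Sigma)\subset\Dom(\CeL)$.

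The stability claim I would settle exactly as in the proof of Theorem~\ref{thm:ContrOneMain}. The structural identity $C_1=\CL H+DK_1=-G_2^\ast$ is unaffected by the present choices, so after the similarity transform $Q_e$ the operator $\hat{A}_e$ is block upper triangular with diagonal blocks $(\Amo +B\KtL)\vert_X$, $G_1-G_2G_2^\ast$, and $(\Amo +L_1\CL)\vert_X$. The outer two generate exponentially stable semigroups by Assumption~\ref{ass:stabilizability}, and for the middle block I would invoke Lemma~\ref{lem:G1fbstab} with $Y_k=\C$: each component $G_{2k}=-(P_L(i\gw_k)K_{1k})^\ast$ is a nonzero functional, hence surjective onto $\C$ with finite-rank (closed-range) adjoint, precisely because $u_k\notin\ker(P(i\gw_k))$ forces $P_L(i\gw_k)K_{1k}\neq 0$ (here $P_L(i\gw_k)$ is an invertible multiple of $P(i\gw_k)$ since $i\gw_k\in\rho(A)$). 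Lemma~\ref{lem:G1fbstab} then yields strong stability of $G_1-G_2G_2^\ast$ and $i\R\subset\rho(G_1-G_2G_2^\ast)$, and by triangularity and similarity the same holds for $A_e$.

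Next I would write down the solution of the regulator equations on the basis. For each $k$ put $z_k=(z_{1k},0)^T$ with $z_{1k}$ supported at index $k$ and chosen so that $P(i\gw_k)Kz_k=-y_k$; when $y_k\neq 0$ this forces $Kz_k=-u_k$ and $z_{1k}=-(\norm{u_k}/\gg_k)\phi_k$, whereas when $y_k=0$ one takes $z_k=0$. Setting $\Gamma\phi_k=z_k$, $\Pi\phi_k=R(i\gw_k,\Amo)(BKz_k+B_dE\phi_k)$, and $\Sigma=(\Pi,\Gamma)^T$, a direct computation of the same form as in the proof of Theorem~\ref{thm:ContrOneROIM} shows $z_k\in\ker(i\gw_k-\Gmo)$ and $(i\gw_k-\Aemo)\Sigma\phi_k=B_e\phi_k$; the only nontrivial input is $P(i\gw_k)Kz_k=-y_k$, which collapses the second block line to $\mc{G}_2F\phi_k$. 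The same relation gives $\CL\Pi\phi_k+D\KL\Gamma\phi_k=P(i\gw_k)Kz_k+P_d(i\gw_k)E\phi_k=-F\phi_k$, i.e.\ the regulation constraint $\CeL\Sigma+D_e=0$ on each $\phi_k$.

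It remains to check that $\Sigma$ is bounded with range in $\Dom(\CeL)$ by estimating $\norm{\Sigma\phi_k}\lesssim\norm{\Pi\phi_k}+\norm{\Gamma\phi_k}$. For $k\in J$ this reduces to $\norm{R(i\gw_k,A)(B_dE\phi_k-Bu_k)}+\norm{u_k}/\gg_k$, and for $k\in\Z\setminus J$ to $\norm{R(i\gw_k,A)B_dE\phi_k}$, so the hypotheses~\eqref{eq:ContrOneORPEFcond} say exactly that $(\norm{\Sigma\phi_k})_{k\in\Z}\in\lp[2](\C)$. Since $i\gw_k\in\rho(A_e)$, Theorem~\ref{thm:CLregularity} identifies each $\Sigma\phi_k$ with $R(i\gw_k,\Aemo)B_e\phi_k$, so Lemma~\ref{lem:Sylsolvabilitycond} upgrades the $\lp[2]$ bound to $\Sigma\in\Lin(W,X_e)$ with $\ran(\Sigma)\subset\Dom(\CeL)$ solving the Sylvester equation. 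Theorem~\ref{thm:ORP} then yields the output regulation property. The main obstacle is conceptual rather than computational: a single copy of each frequency need not satisfy the \Gconds, so the internal model principle (Lemma~\ref{lem:RBeformgeneral}) is unavailable, and solvability of the regulation constraint must instead be engineered by hand through the tailored vectors $u_k$ — which is exactly why the conclusion holds only for the nominal plant and carries no robustness guarantee.
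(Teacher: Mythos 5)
Your proposal is correct and follows essentially the same route as the paper's own proof: closed-loop stability via the similarity transform and Lemma~\ref{lem:G1fbstab} (using $C_1=\CL H+DK_1=-G_2^\ast$ with the rank-one, nonzero components $G_{2k}$), an explicit construction of $\Sigma=(\Pi,\Gamma)^T$ on the basis vectors with $z_{1k}^k=-\norm{u_k}/\gg_k$ so that $Kz_k=-u_k$ and the regulation constraint collapses to $P(i\gw_k)u_k=y_k$, and finally boundedness and $\ran(\Sigma)\subset\Dom(\CeL)$ via Lemma~\ref{lem:Sylsolvabilitycond} under conditions~\eqref{eq:ContrOneORPEFcond}, concluding with Theorem~\ref{thm:ORP}. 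The only cosmetic difference is that you spell out the identification $\Sigma\phi_k=R(i\gw_k,\Aemo)B_e\phi_k$ through Theorem~\ref{thm:CLregularity}, which the paper uses implicitly.
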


\begin{proof}
  We have $P_L(i\gw_k)K_{1k}\neq 0$ for all $k\in\Z$. The strong stability of the closed-loop system
  and $i\R\subset \rho(A_e)$ 
  can be shown similarly as in the proof of Theorem~\ref{thm:ContrOneMain} since 
  $C_1=\CL H+DK_1 = -G_2^\ast$, and
  $G_{2k}\in \Lin(Y,\C)$
are rank one and nonzero.
  
  It remains to show that the regulator equations~\eqref{eq:regeqns} have a solution $\Sigma \in \Lin(W,X_e)$ satisfying $\ran(\Sigma)\subset \Dom(\CeL)$.
We will construct the solution $\Sigma = (\Pi,\Sigma)^T$ 
explicitly by defining $\Pi\phi_k$ and $\Gamma \phi_k$.  
Let $k\in\Z$. If $y_k\neq 0$, then define $\Gamma \phi_k = \left[ z_{1k}\atop 0 \right]$ where
$z_{1k}=(z_{1k}^l)_{l\in\Z}$ with $z_{1k}^l=0$ for all $l\neq k$ and
$z_{1k}^k= -\frac{\norm{u_k}}{\gg_k}$.
Then $\Gamma\phi_k\in \ker(i\gw_k-\mc{G}_1)$. Moreover, define $\Pi\phi_k = R(i\gw_k,A)(BK\Gamma \phi_k +B_dE\phi_k)\in \XBBd$.
Then
\eq{
\CeL \Sigma \phi_k + D_e\phi_k 
= \CL \Pi\phi_k + D\KL \Gamma \phi_k +F\phi_k 
= -P(i\gw_k)u_k  + y_k
=0.
}
Moreover, since $\Gamma\phi_k\in \ker(i\gw_k-\mc{G}_1)$ we have that $\CL \Pi\phi_k + D\KL \Gamma \phi_k =-F\phi_k$ implies
\eq{
(i\gw_k-A_e)\Sigma \phi_k 
&= \pmat{(i\gw_k-A)\Pi\phi_k - B\KL\Gamma \phi_k \\
(i\gw_k-\mc{G}_1)\Gamma \phi_k - \mc{G}_2(\CL\Pi\phi_k + D\KL\Gamma \phi_k)}
= B_e\phi_k,
}
or equivalently $\Sigma S\phi_k = \Aemo \Sigma \phi_k + B_e\phi_k$.
Alternatively, if $k\in\Z$ is such that $y_k = 0$, we choose $\Gamma\phi_k = 0$ and $\Pi\phi_k = R(i\gw_k,A)B_dE\phi_k$. Then we can similarly see that $\CeL\Sigma \phi_k + D_e\phi_k = 0$ and $\Sigma S\phi_k = \Aemo \Sigma\phi_k + B_e\phi_k$.
It remains to show that $\Sigma$ is bounded and $\ran(\Sigma)\subset \Dom(\CeL)$. By Lemma~\ref{lem:Sylsolvabilitycond} the operator $\Sigma$ has these properties  if $(\norm{\Sigma\phi_k})_{k\in\Z}=(\norm{R(i\gw_k,\Aemo)B_e\phi_k})_{k\in\Z}\in \lp[2](\C)$.  
If $y_k\neq 0$, we have
\eq{
\norm{\Sigma \phi_k} 
&\lesssim \norm{\Pi\phi_k}+ \norm{\Gamma \phi_k}
= \norm{R(i\gw_k,A)(BK_{1k}z_{1k}^k + B_dE\phi_k)} + \norm{z_{1k}^k}\\
&= \norm{ R(i\gw_k,A)B_dE\phi_k - R(i\gw_k,A)Bu_k } + \frac{\norm{u_k}}{\gg_k}.
}
Similarly, if $y_k=0$, then
\eq{
\norm{\Sigma \phi_k} 
\lesssim 
 \norm{R(i\gw_k,A)(BK_{1k}z_{1k}^k + B_dE\phi_k)} + \norm{z_{1k}^k}
= \norm{ R(i\gw_k,A)B_dE\phi_k } .
} 
We thus have $(\norm{\Sigma \phi_k})_{k\in\Z}\in \lp[2](\C)$ due to the conditions~\eqref{eq:ContrOneORPEFcond}. By Theorem~\ref{thm:ORP} the controller solves the output regulation problem.
\end{proof}

\section{The Observer-Based Controller}
\label{sec:ContrTwo}

In this section we construct an observer based controller that solves the robust output regulation problem. 
For this controller structure it is necessary to assume that $P_K(i\gw_k)$ are boundedly invertible for all $k\in\Z$. This in particular implies $\dim U = \dim Y$ if $\dim Y<\infty$.

\medskip

\noindent\textbf{Step $\bm{1}^\circ$:}
We begin by choosing the state space of the controller as $Z=\ZI\times X$, and choosing the structure of $\PARcontr$ as
  \eq{
  \mc{G}_{1}=\pmat{G_1&0\\(B+LD)\KoL &\Amo +B\KtL +L(\CL +D\KtL)}, 
\quad~  \mc{G}_{2}=\pmat{G_2\\-L},
  }
  and $K = (\KoL, \; \KtL)$. 
  By Assumption~\ref{ass:stabilizability} we can choose
$K_{21}\in\Lin(X_1,U)$ and $L\in\Lin(Y,X_{-1})$ in such a way that
$(\Amo +B\KtoL)\vert_X$ and $(A+L\CL)\vert_X $ generate exponentially stable semigroups and the system $\left( A,[B,~ L, ~B_d], \pmatsmall{C\\K_{21}},D \right)$ is regular. 
We define
 \eq{
P_K(\gl) = (\CL + D\KtoL) R(\gl,\Amo +B\KtoL)B + D, \qquad \gl\in \rho( \Amo +B\KtoL).
 }
We assume that $P_K(i\gw_k)$ are boundedly invertible for all $k\in\Z$.

\medskip

\noindent\textbf{Step $\bm{2}^\circ$:} 
  The operators $(G_1,G_2)$ make up the internal model of the exosystem~\eqref{eq:exointro}, and they are defined by choosing 
  $\ZI = \lp[2](Y)$, and
\eq{
G_1 = \diag \bigl( i\gw_k I_Y\bigr)_{k\in\Z}, \qquad \Dom(G_1) = \setm{(z_k)_{k\in\Z}\in \ZI}{(\gw_kz_k)_{k\in\Z}\in \lp[2](Y)}
}
We choose 
$G_2=(G_{2k})_{k\in\Z}$ in such a way that
the components $G_{2k}\in \Lin(Y)$ are boundedly invertible and $(\norm{G_{2k}})_{k\in\Z}\in \lp[2](\C)$. In particular, it is 
possible to choose $G_{2k}=g_{2k} I_Y$, where $(g_{2k})_{k\in\Z}\in \lp[2](\C)$ and $g_{2k}\neq 0$ for all $k\in\Z$.
For more concrete choices of $G_{2k}$, see Corollary~\ref{cor:ContrTwoPolExp}.

\medskip

\noindent\textbf{Step $\bm{3}^\circ$:} 
We define  
$H: \ran( [B,~B_d])\subset X_{-1}\to \ZI$
such that
\eq{
Hx = (G_{2k}(\CL+D\KtoL )R(i\gw_k,\Amo +B\KtoL )x)_{k\in\Z}
\qquad
\forall  x
\in
\ran( [B,B_d]).
}
Since we have from~\cite[Sec. 7]{Wei94} that $(A+B\KtoL ,B,B_d,\CL+D\KtoL ,D)$ is a regular linear system, it is immediate that $H$ is well-defined and  $H\in \Lin(X,\ZI)$.

\medskip

\noindent\textbf{Step $\bm{4}^\circ$:}
We choose the operator $K_1\in \Lin(\ZI,U)$ in such a way that
\eq{
K_1 = -\sum_{k\in\Z} (G_{2k} P_K(i\gw_k))^\ast z_k
}
for all $z=(z_k)_{k\in\Z}\in \ZI$.
We define $K_2=\KtoL  + K_1H\in \Lin(X_1,U)$, 
which is an admissible observation operator for $A$ and $\KtL = K_2$. Finally, we 
 choose the domain of the operator $\mc{G}_1$ as
\eq{
\Dom(\mc{G}_1) 
= \Bigl\{
\pmat{z_1\\x_1}
&\in \Dom(G_1)\times \XBL \Bigm|
 (\Amo +L\CL)x_1+(B+LD)(K_1z_1+\KtL x_1)\in X\Bigr\}.
}

\begin{thm}
  \label{thm:ContrTwoMain}
If $E\in \Lin(W,\Uw)$ and $F\in \Lin(W,Y)$ satisfy
  \eqn{
  \label{eq:ContrTwoEFcond}
  \left( \norm{P_K(i\gw_k)\inv G_{2k}\inv}^2 (\norm{E\phi_k}+\norm{F\phi_k}) \right)_{k\in\Z}\in \lp[2](\C),
  }
then the controller solves the robust output regulation problem.

The controller is guaranteed to be robust with respect to all perturbations in $\Ops$ for which the strong closed-loop stability is preserved,
$\set{i\gw_k}_{k\in\Z}\subset\rho(\tilde{A}_e)$, $\set{i\gw_k}_{\abs{k}\geq N}\subset\rho(\tilde{A})$ for some $N\in\N$, $\tilde{P}(i\gw_k)$
are invertible whenever  $\abs{k}\geq N$, and for which%
\begin{subequations}%
  \label{eq:ContrTwoEFcondpert}
  \eqn{
  &\left( \norm{R(i\gw_k,\Atmo )\left( \tilde{B}_d\tilde{E}\phi_k- \tilde{B}\tilde{P}(i\gw_k)\inv \tilde{y}_k \right)} \right)_{\abs{k}\geq N} \in \lp[2](\C)\\
  &\left(\norm{
  (G_{2k}P_K(i\gw_k)^\ast)\inv
  (I-\KtL R_L^k B_L) \tilde{P}(i\gw_k)\inv \tilde{y}_k}\right)_{\abs{k}\geq N} \in \lp[2](\C) \\
  &\left(\norm{R_L^k B_L\tilde{P}(i\gw_k)\inv \tilde{y}_k}\right)_{\abs{k}\geq N} \in \lp[2](\C) 
  } 
\end{subequations}%
where
$\tilde{y}_k = \tilde{P}_d(i\gw_k)\tilde{E}\phi_k + \tilde{F}\phi_k$, $R_L^k = R(i\gw_k,\Amo +L\CL)$ and $B_L = B+LD$.
\end{thm}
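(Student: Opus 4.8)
The plan is to follow the scheme of the proof of Theorem~\ref{thm:ContrOneMain} and verify that the observer-based controller $(\mc{G}_1,\mc{G}_2,K)$ (i) is regular and satisfies the \Gconds, (ii) is similar to an upper-triangular operator with stable diagonal blocks, (iii) stabilizes the closed-loop system strongly with $i\R\subset\rho(A_e)$, and (iv) admits the resolvent estimate that turns~\eqref{eq:ContrTwoEFcond} into the summability hypothesis of the internal model principle; the conclusion then follows from Theorem~\ref{thm:IMP}. Regularity is obtained from~\cite[Sec. 7]{Wei94} by exhibiting $(\mc{G}_1,\mc{G}_2,K)$ as an admissible output-feedback perturbation of an evidently regular composite system, exactly as in Theorem~\ref{thm:ContrOneMain}. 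For the \Gconds, $\ker(\mc{G}_2)=\set{0}$ is immediate from the bounded invertibility of the components $G_{2k}$. To check $\ran(i\gw_n-\Gmo)\cap\ran(\mc{G}_2)=\set{0}$ I would solve $(i\gw_n-\Gmo)(z_1,x_1)^T=\mc{G}_2y$: the block-triangular form of $\mc{G}_1$ gives $(i\gw_n-G_1)z_1=G_2y$, whose $n$-th component reads $G_{2n}y=0$, so $y=0$ by injectivity of $G_{2n}$ and hence the intersection element $\mc{G}_2y$ vanishes.

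The core is the stability analysis, for which I would write the closed-loop state as $(x,z_1,x_1)$ on $X\times\ZI\times X$ and pass to the coordinates $(x,\hat z_1,e)$ with $e=x-x_1$ and $\hat z_1=z_1+Hx$, i.e.\ apply the bounded, boundedly invertible similarity transform
\[
Q_e=\pmat{I&0&0\\ H&I&0\\ I&0&-I}.
\]
Subtracting the $x_1$-row of $A_e$ from the $x$-row shows that the observer error obeys the decoupled, exponentially stable dynamics $\dot e=(\Amo+L\CL)\vert_X\,e$. The operator $H$ of Step~$3^\circ$ satisfies the Sylvester equation $G_1H-HA_K=G_2(\CL+D\KtoL)$ with $A_K=\Amo+B\KtoL$, which I would verify using $A_KR(i\gw_k,A_K)=i\gw_kR(i\gw_k,A_K)-I$; together with the choice $\KtL=\KtoL+K_1H$ this makes the plant block collapse to the exponentially stable generator $A_K$, since $\Amo+B\KtL-BK_1H=A_K$. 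Finally, the identity $HB=-K_1^\ast-G_2D$, read off from $P_K(i\gw_k)=(\CL+D\KtoL)R(i\gw_k,A_K)B+D$, turns the internal-model block into $G_1+(G_2D+HB)K_1=G_1-K_1^\ast K_1$. Hence $A_e$ is similar to the upper-triangular operator with diagonal blocks $A_K$, $G_1-K_1^\ast K_1$, and $(\Amo+L\CL)\vert_X$.

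I would then stabilize the internal-model block by applying Lemma~\ref{lem:G1fbstab} with $G_2$ replaced by $K_1^\ast$, whose components $-G_{2k}P_K(i\gw_k)$ are boundedly invertible because $P_K(i\gw_k)$ is invertible by assumption; since $K_1^\ast(K_1^\ast)^\ast=K_1^\ast K_1$, this yields strong stability of $G_1-K_1^\ast K_1$, the inclusion $i\R\subset\rho(G_1-K_1^\ast K_1)$, and the bound $\norm{R(i\gw_k,G_1-K_1^\ast K_1)K_1^\ast}=\norm{(G_{2k}P_K(i\gw_k))\inv}=\norm{P_K(i\gw_k)\inv G_{2k}\inv}$. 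Triangularity and the exponential stability of the two stabilized plant semigroups then give strong stability of $T_e(t)$ and $i\R\subset\rho(A_e)$. For the summability estimate I would, as in Theorem~\ref{thm:ContrOneMain}, use Theorem~\ref{thm:CLregularity} to replace $\RBm$ by $R(i\gw_k,A_e^e)B_e\phi_k$, transform by $Q_e$, and solve the triangular system from the bottom up. The error block contributes $\lesssim\norm{E\phi_k}+\norm{F\phi_k}$, while the internal-model block is driven by the input $HB_dE\phi_k+G_2F\phi_k$, which is \emph{not} aligned with $\ran(K_1^\ast)$: on the resonant slot $R(i\gw_k,G_1-K_1^\ast K_1)$ behaves like the inverse of the compression of $K_1^\ast K_1$, of norm $\norm{(G_{2k}P_K(i\gw_k))\inv}^2$, rather than the single factor furnished by Lemma~\ref{lem:G1fbstab} for $K_1^\ast$-aligned inputs. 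Collecting the factors gives $\norm{\RBm}\lesssim\norm{P_K(i\gw_k)\inv G_{2k}\inv}^2(\norm{E\phi_k}+\norm{F\phi_k})$, so~\eqref{eq:ContrTwoEFcond} implies $(\RBm)_{k\in\Z}\in\lp[2](X_e)$ and Theorem~\ref{thm:IMP} gives robust output regulation.

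The robustness claim I would establish separately through Lemma~\ref{lem:RBeformgeneral}. Solving $(i\gw_k-\mc{G}_1)z_k=0$ for $z_k=(z_1,x_1)$ shows $z_1\in\ker(i\gw_k-G_1)$ while the observer part is $x_1=R_L^kB_L(I-\KtL R_L^kB_L)\inv K_1z_1$ with $R_L^k=R(i\gw_k,\Amo+L\CL)$ and $B_L=B+LD$, whence $Kz_k=(I-\KtL R_L^kB_L)\inv K_1z_1$. Imposing $\tilde P(i\gw_k)Kz_k=-\tilde y_k$ determines the resonant component $(z_1)_k$ through $K_{1k}=-(G_{2k}P_K(i\gw_k))^\ast$, and Lemma~\ref{lem:RBeformgeneral} then gives $R(i\gw_k,\Aetmo)\tilde B_e\phi_k$ with plant part $R(i\gw_k,\Atmo)(\tilde B_d\tilde E\phi_k-\tilde B\tilde P(i\gw_k)\inv\tilde y_k)$, internal-model part $(G_{2k}P_K(i\gw_k)^\ast)\inv(I-\KtL R_L^kB_L)\tilde P(i\gw_k)\inv\tilde y_k$, and observer part $R_L^kB_L\tilde P(i\gw_k)\inv\tilde y_k$, so that $\lp[2]$-summability of $R(i\gw_k,\Aetmo)\tilde B_e\phi_k$ is exactly~\eqref{eq:ContrTwoEFcondpert}. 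The step I expect to be the main obstacle is precisely the resolvent bookkeeping behind the squared factor $\norm{P_K(i\gw_k)\inv G_{2k}\inv}^2$: one must track how $R(i\gw_k,G_1-K_1^\ast K_1)$ acts on inputs outside $\ran(K_1^\ast)$, where Lemma~\ref{lem:G1fbstab} gives only the single-factor bound, so that two rather than one factors of $\norm{(G_{2k}P_K(i\gw_k))\inv}$ accumulate. Confirming that the internal-model block is \emph{exactly} $G_1-K_1^\ast K_1$ via the identities $\KtL=\KtoL+K_1H$ and $HB=-K_1^\ast-G_2D$ is where the observer-based structure departs most sharply from the analysis of Theorem~\ref{thm:ContrOneMain}, and is where I would spend the most care.
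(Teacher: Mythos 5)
Your proposal is correct and follows essentially the same route as the paper's proof: the same block-triangular similarity transform (up to sign conventions), the same Sylvester identity for $H$, the identification $HB+G_2D=-K_1^\ast$ so that the internal-model block becomes $G_1-K_1^\ast K_1$, Lemma~\ref{lem:G1fbstab} for its stability, and the Lemma~\ref{lem:RBeformgeneral} kernel computation producing exactly the three conditions~\eqref{eq:ContrTwoEFcondpert}. The one step you flag as the main obstacle --- the squared bound $\norm{R(i\gw_k,G_1-K_1^\ast K_1)}=O\bigl(1+\norm{P_K(i\gw_k)\inv G_{2k}\inv}^2\bigr)$ needed for inputs not aligned with $\ran(K_1^\ast)$ --- is precisely Theorem~\ref{thm:IMstabnonuniform}, which the paper's own proof also invokes (as a forward reference) rather than reproving inline, so your argument closes by citing it.
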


If $\set{i\gw_k}_{k\in\Z}\subset \rho(A)$,~\eqref{eq:ContrTwoEFcond} can again be replaced with~\eqref{eq:ContrTwoEFcondpert} for the nominal plant.

\begin{cor}
  \label{cor:ContrTwoAltEF}
  If there exists $N\in\N$ such that $\set{i\gw_k}_{\abs{k}\geq N}\subset \rho(A)$ and
  $\vspace{-1.6ex}\displaystyle\sup_{\abs{k}\geq N}\norm{R(i\gw_k,A)}<\infty$,
  then the conclusions of Theorem~\textup{\ref{thm:ContrOneMain}} hold if 
\eq{
&\left( \norm{ (G_{2k}P_K(i\gw_k)^\ast)\inv }
  \norm{P(i\gw_k)\inv}(\norm{P_d(i\gw_k) E\phi_k}  + \norm{F\phi_k} ) \right)_{\abs{k}\geq N} \in \lp[2](\C)
    }
    and the controller is guaranteed to be robust with respect to perturbations in $\Ops$ for which the strong closed-loop stability is preserved, 
$\set{i\gw_k}_{k\in\Z}\subset\rho(\tilde{A}_e)$,
    $\set{i\gw_k}_{\abs{k}\geq N}\subset \rho(\tilde{A})$,
    $\sup_{\abs{k}\geq N} \norm{R(i\gw_k,\Atmo )}<\infty$,
and
    \eq{
    \left( \norm{ (G_{2k}P_K(i\gw_k)^\ast)\inv } \norm{\tilde{P}(i\gw_k)\inv}(\norm{\tilde{P}_d(i\gw_k) \tilde{E}\phi_k}  + \norm{\tilde{F}\phi_k}) \right)_{\abs{k}\geq N} \in \lp[2](\C).
    }
\end{cor}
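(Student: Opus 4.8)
The plan is to deduce the corollary from Theorem~\ref{thm:ContrTwoMain}: under the extra hypotheses $\set{i\gw_k}_{\abs{k}\geq N}\subset\rho(\tilde{A})$ and $\sup_{\abs{k}\geq N}\norm{R(i\gw_k,\Atmo)}<\infty$ I will show that the single summability condition of the corollary implies the three conditions in~\eqref{eq:ContrTwoEFcondpert}. The nominal case ($\tilde{\cdot}$ equal to the nominal data) and an arbitrary admissible perturbation $\PARsysopspert\in\Ops$ are handled by identical estimates. The solvability of the Sylvester equation ultimately amounts to $(\norm{\RBm})_{k\in\Z}\in\lp[2]$, so it suffices to control that sequence; for $\abs{k}\geq N$ this reduces to~\eqref{eq:ContrTwoEFcondpert}, and applying Theorem~\ref{thm:ContrTwoMain} then gives that the controller solves the robust output regulation problem and is robust for the stated class.

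First I collect the uniform bounds that drive the reduction. The admissibility of $\tilde{B}$ and $\tilde{B}_d$ together with $\sup_{\abs{k}\geq N}\norm{R(i\gw_k,\Atmo)}<\infty$ gives $\sup_{\abs{k}\geq N}\norm{R(i\gw_k,\Atmo)\tilde{B}}<\infty$ and $\sup_{\abs{k}\geq N}\norm{R(i\gw_k,\Atmo)\tilde{B}_d}<\infty$, exactly as recorded before Corollary~\ref{cor:ContrOneAltEF} applied to the regular perturbed plant. Next, since $(A+L\CL)\vert_X$ generates an exponentially stable semigroup and the stabilised systems remain regular, the transfer-function-type maps $k\mapsto R_L^kB_L$ and $k\mapsto\KtoL R_L^kB_L$ are uniformly bounded on $i\R$; as $\KtL=\KtoL+\KoL H$ with $\KoL H\in\Lin(X,U)$ bounded, this yields $\sup_k\norm{\KtL R_L^kB_L}<\infty$ and hence $\sup_k\norm{I-\KtL R_L^kB_L}<\infty$. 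Finally, $P_K$ is the transfer function of the exponentially stable regular system $(\Amo+B\KtoL,B,\CL+D\KtoL,D)$, so $\sup_k\norm{P_K(i\gw_k)}<\infty$, while $(\norm{G_{2k}})_k\in\lp[2]$ forces $\norm{G_{2k}\inv}\geq\norm{G_{2k}}\inv\to\infty$; from $\norm{G_{2k}\inv}\leq\norm{P_K(i\gw_k)^\ast}\norm{(G_{2k}P_K(i\gw_k)^\ast)\inv}$ I obtain $\norm{(G_{2k}P_K(i\gw_k)^\ast)\inv}\gtrsim 1$ for $\abs{k}\geq N$.

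Using these bounds I estimate the three lines of~\eqref{eq:ContrTwoEFcondpert} termwise, with $\tilde{y}_k=\tilde{P}_d(i\gw_k)\tilde{E}\phi_k+\tilde{F}\phi_k$ and $\norm{\tilde{P}(i\gw_k)\inv\tilde{y}_k}\leq\norm{\tilde{P}(i\gw_k)\inv}(\norm{\tilde{P}_d(i\gw_k)\tilde{E}\phi_k}+\norm{\tilde{F}\phi_k})$. The first line is bounded by $\norm{R(i\gw_k,\Atmo)\tilde{B}_d}\norm{\tilde{E}\phi_k}+\norm{R(i\gw_k,\Atmo)\tilde{B}}\norm{\tilde{P}(i\gw_k)\inv\tilde{y}_k}\lesssim\norm{\tilde{E}\phi_k}+\norm{\tilde{P}(i\gw_k)\inv\tilde{y}_k}$, where $(\norm{\tilde{E}\phi_k})_k\in\lp[2]$ since $\tilde{E}$ is Hilbert--Schmidt; the third line is bounded by $\norm{R_L^kB_L}\norm{\tilde{P}(i\gw_k)\inv\tilde{y}_k}\lesssim\norm{\tilde{P}(i\gw_k)\inv\tilde{y}_k}$; and the second line by $\norm{(G_{2k}P_K(i\gw_k)^\ast)\inv}\norm{I-\KtL R_L^kB_L}\norm{\tilde{P}(i\gw_k)\inv\tilde{y}_k}\lesssim\norm{(G_{2k}P_K(i\gw_k)^\ast)\inv}\norm{\tilde{P}(i\gw_k)\inv\tilde{y}_k}$. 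Because $\norm{(G_{2k}P_K(i\gw_k)^\ast)\inv}\gtrsim 1$, each of these is majorised by $\norm{(G_{2k}P_K(i\gw_k)^\ast)\inv}\norm{\tilde{P}(i\gw_k)\inv}(\norm{\tilde{P}_d(i\gw_k)\tilde{E}\phi_k}+\norm{\tilde{F}\phi_k})$, which lies in $\lp[2]$ by hypothesis, so~\eqref{eq:ContrTwoEFcondpert} holds. The finitely many indices $\abs{k}<N$ contribute only finitely many finite terms to the sequence $(\norm{\RBm})_{k\in\Z}$ (the relevant resolvents exist since $\set{i\gw_k}_{k\in\Z}\subset\rho(\tilde{A}_e)$, and $i\R\subset\rho(A_e)$ in the nominal case), and so they do not affect $\lp[2]$-membership.

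The main obstacle is the very first of these bounds, the passage from the scalar resolvent bound $\sup\norm{R(i\gw_k,\Atmo)}<\infty$ to the uniform boundedness of $\norm{R(i\gw_k,\Atmo)\tilde{B}}$ and $\norm{R(i\gw_k,\Atmo)\tilde{B}_d}$ on the imaginary axis; this is the genuinely infinite-dimensional, admissibility-dependent step, and I would simply invoke the reduction already isolated before Corollary~\ref{cor:ContrOneAltEF}. Everything else is elementary operator-norm bookkeeping resting on the exponential stability of the stabilised and detected semigroups and on the Hilbert--Schmidt property of $E$, $F$, $\tilde{E}$, $\tilde{F}$.
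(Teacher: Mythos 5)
Your proposal is correct and follows exactly the route the paper intends: the corollary is stated without a separate proof, as an immediate simplification of the conditions~\eqref{eq:ContrTwoEFcondpert} in Theorem~\ref{thm:ContrTwoMain} under the uniform resolvent bound, using precisely the reduction (admissibility plus $\sup_{\abs{k}\geq N}\norm{R(i\gw_k,\Amo)}<\infty$ giving uniform bounds on $R(i\gw_k,\Amo)B$, $R(i\gw_k,\Amo)B_d$, $R_L^kB_L$ and $\KtL R_L^kB_L$) that the paper records before Corollary~\ref{cor:ContrOneAltEF}. Your additional observation that $\norm{(G_{2k}P_K(i\gw_k)^\ast)\inv}\gtrsim 1$, needed so that the single displayed condition also dominates the first and third lines of~\eqref{eq:ContrTwoEFcondpert}, is a detail the paper leaves implicit and is argued correctly.
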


\begin{cor}
  \label{cor:ContrTwoPolExp}
  The following hold. 
  \begin{itemize}
    \item[\textup{(a)}] If 
$\norm{P_K(i\gw_k)\inv}=\Omi(\abs{\gw_k}^\ga) $ for some $\ga>0$ and $(\gg_k)_{k\in\Z}\in \lp[2](\C)$, the choice 
      \ieq{
      G_{2k} = \gg_k I_Y
      }
      solves the robust output regulation problem for $E$ and $F$ satisfying
      \eq{
      \left( \frac{\abs{\gw_k}^{2\ga} }{\abs{\gg_k}^2} (\norm{E\phi_k}+\norm{F\phi_k}) \right)_{k\in\Z}\in \lp[2](\C).
      }
      If we in particular choose $\gg_k = \abs{\gw_k}^{-\gb}$  for some $\gb>1/2$ whenever $\gw_k\neq 0$, then $\abs{\gw_k}^{2\ga} /\abs{\gg_k}^2 = \abs{\gw_k}^{2(\ga+\gb)}$ whenever $\gw_k\neq 0$.
      
    \item[\textup{(b)}] If $\norm{P_K(i\gw_k)\inv}=\Omi(e^{\ga\abs{\gw_k}}) $ for some $\ga>0$ and $(\gg_k)_{k\in\Z}\in\lp[2](\C)$, the choice $G_{2k} = \gg_k I_Y$
solves the robust output regulation problem for $E$ and $F$ satisfying
      \eq{
      \left( \frac{e^{2\ga\abs{\gw_k}} }{\abs{\gg_k}^2} (\norm{E\phi_k}+\norm{F\phi_k}) \right)_{k\in\Z}\in \lp[2](\C).
      }
  \end{itemize}
\end{cor}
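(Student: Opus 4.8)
The plan is to obtain both assertions as immediate consequences of Theorem~\ref{thm:ContrTwoMain}, by checking that the nominal smoothness condition~\eqref{eq:ContrTwoEFcond} holds for the prescribed choice $G_{2k}=\gg_k I_Y$. First I would confirm that this choice is admissible for the observer-based construction of Section~\ref{sec:ContrTwo}: since $(\gg_k)_{k\in\Z}\in\lp[2](\C)$ with every $\gg_k\neq 0$, each component $G_{2k}$ is boundedly invertible with $G_{2k}\inv=\gg_k\inv I_Y$, and $(\norm{G_{2k}})_{k\in\Z}=(\abs{\gg_k})_{k\in\Z}\in\lp[2](\C)$, so the requirements imposed on $G_2$ in Step~$2^\circ$ are met. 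The standing invertibility of $P_K(i\gw_k)$ assumed in that section (which is implicit in the growth hypotheses anyway) supplies the remaining premise of the theorem.

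The one computation that makes everything collapse is that $G_{2k}\inv$ is a scalar multiple of the identity and hence factors out of the operator norm:
\[
\norm{P_K(i\gw_k)\inv G_{2k}\inv}=\abs{\gg_k}\inv\,\norm{P_K(i\gw_k)\inv}.
\]
Consequently the generic summand in~\eqref{eq:ContrTwoEFcond} equals $\abs{\gg_k}^{-2}\norm{P_K(i\gw_k)\inv}^2(\norm{E\phi_k}+\norm{F\phi_k})$. For part~(a) I would insert $\norm{P_K(i\gw_k)\inv}=\Omi(\abs{\gw_k}^\ga)$, giving
\[
\norm{P_K(i\gw_k)\inv G_{2k}\inv}^2(\norm{E\phi_k}+\norm{F\phi_k})\lesssim \frac{\abs{\gw_k}^{2\ga}}{\abs{\gg_k}^2}(\norm{E\phi_k}+\norm{F\phi_k}),
\]
so the assumed $\lp[2]$-membership of the right-hand sequence forces~\eqref{eq:ContrTwoEFcond}, and Theorem~\ref{thm:ContrTwoMain} then yields the robust output regulation. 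Part~(b) is verbatim the same after replacing $\abs{\gw_k}^\ga$ by $e^{\ga\abs{\gw_k}}$, whose square is $e^{2\ga\abs{\gw_k}}$. The closing \emph{in particular} remark is a one-line simplification: for $\gg_k=\abs{\gw_k}^{-\gb}$ one has $\abs{\gg_k}^2=\abs{\gw_k}^{-2\gb}$, whence $\abs{\gw_k}^{2\ga}/\abs{\gg_k}^2=\abs{\gw_k}^{2(\ga+\gb)}$.

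The proof is essentially mechanical, so the only subtlety---and the closest thing to an obstacle---is bookkeeping around the $\Omi$ notation: by the definition adopted in Section~\ref{sec:plantexo}, the bound $\norm{P_K(i\gw_k)\inv}=\Omi(\abs{\gw_k}^\ga)$ is only asserted for $\abs{k}\geq N_g$. I would point out that the finitely many excluded indices $\abs{k}<N_g$ contribute only finitely many terms and therefore cannot affect membership in $\lp[2](\C)$ (equivalently, they are absorbed into the implicit constant of $\lesssim$), so passing from the tail estimate to a full-sequence $\lp[2]$ statement is harmless. With that remark the verification of~\eqref{eq:ContrTwoEFcond} is complete in both cases.
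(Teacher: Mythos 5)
Your proposal is correct and matches the paper's intended argument: the corollary is stated as an immediate specialization of Theorem~\ref{thm:ContrTwoMain}, obtained exactly as you do by noting $\norm{P_K(i\gw_k)\inv G_{2k}\inv}=\abs{\gg_k}\inv\norm{P_K(i\gw_k)\inv}$ for $G_{2k}=\gg_k I_Y$ and substituting the polynomial or exponential growth bound into condition~\eqref{eq:ContrTwoEFcond}. Your remarks on the admissibility of the choice $G_{2k}=\gg_k I_Y$ (cf.\ Step~$2^\circ$ of Section~\ref{sec:ContrTwo}) and on absorbing the finitely many indices excluded by the $\Omi$ notation are exactly the right bookkeeping.
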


%\begin{proof}[Proof of Theorem~\textup{\ref{thm:ContrTwoMain}}]
\noindent\textit{Proof of Theorem~\textup{\ref{thm:ContrTwoMain}}.}
The proof can be completed similarly as the proof of Theorem~\ref{thm:ContrOneMain}. The regularity of the controller follows from~\cite{Wei94}, and \Gconds\ can be verified similarly as in~\cite[Thm. 15]{Pau16a}.
The operator $B_1= HB+G_2D\in \Lin(U,\ZI)$ is well-defined and $B_1=(B_{1k})_{k\in\Z} = (G_{2k} P_K(i\gw_k))_{k\in\Z}$.
Thus $B_1 = -K_1^\ast$ and Lemma~\ref{lem:G1fbstab} implies that 
the semigroup generated by $G_1+B_1K_1=G_1-B_1 B_1^\ast$ is strongly stable, $i\R\subset \rho(G_1+B_1K_1)$, and
\ieq{
 \norm{K_1 R(i\gw_k,G_1+B_1K_1)}
=\norm{ P_K(i\gw_k)\inv G_{2k}\inv }
}
for all $k\in\Z$.  

If we choose a similarity transform
\eq{
Q_{e}=\pmat{-I&0&0\\H&I&0\\-I&0&I} =  Q_{e}\inv 
}
and use the fact that $H$ is a solution of the Sylvester equation $G_1H =H(\Amo +B\KtoL )+G_2(\CL + D\KtoL )$, then we can see that $\hat{A}_e = Q_{e}A_{e}Q_{e}\inv$ where
\eq{
\hat{A}_e= \pmat{\Amo  + B \KtoL  & -BK_1 & -B\KtL  \\ 0& G_1  +B_1K_1 &B_1\KtL   \\ 0&0& \Amo  + L\CL } 
}
with the natural domain.
The properties of $G_1+B_1K_1$ and the exponential stability of $(\Amo +B\KtoL )\vert_X$ and $(A+L\CL)\vert_X $ imply that
 the closed-loop system is strongly stable and $i\R\subset \rho(A_e)$.
Later in Lemma~\ref{thm:IMstabnonuniform} it is shown that
$\norm{R(i\gw_k,G_1-G_2G_2^\ast)}= O(1+\norm{P_K(i\gw_k)\inv G_{2k}\inv }^2) $.
A direct estimate
using the admissibility of  $B$, $B_d$, $C$, $L$, and $K_2$, and the boundedness of $HB_d$ shows that 
$(\norm{R(i\gw_k,\Aemo )B_e\phi_k})_{k\in\Z}\in \lp[2](\C)$ if~\eqref{eq:ContrTwoEFcond} are satisfied.

 Finally, we will show that under the additional assumptions on the perturbations the conditions~\eqref{eq:ContrTwoEFcondpert} are equivalent to
$(\norm{R(i\gw_k,\Aetmo )\tilde{B}_e\phi_k})_{k\in\Z}\in \lp[2](\C)$.
 Let $k\in\Z$ be such that $\abs{k}\geq N$ and denote 
$\tilde{y}=\tilde{P}_d(i\gw_k)\tilde{E}\phi_k + \tilde{F}\phi_k$.
By Lemma~\ref{lem:RBeformgeneral} we have
  \eq{
  R(i\gw_k,\Aetmo )\tilde{B}_e\phi_k
  = \pmat{R(i\gw_k,\tilde{A}) (\tilde{B}_d\tilde{E}\phi_k+ \tilde{B}Kz  )\\z}
}
  where $z\in \ker(i\gw_k-\mc{G}_1)$ is such that
\ieq{
Kz = 
-\tilde{P}(i\gw_k)\inv \tilde{y}.
}
Since $(i\gw_k-\mc{G}_1)z=0$, we have that $z=(z_1,x_1)^T\in \Dom(G_1)\times \XBL$  and denoting $B_L = B+LD$ and 
$R_L = R(i\gw_k,\Amo +L\CL)$ we have
\eq{
&\pmat{i\gw_k-G_1&0\\-B_L\KoL &i\gw_k-\Amo -L\CL - B_L\KtL } \pmat{z_1\\x_1}=0 \\[1ex]
 \Leftrightarrow \qquad &
 \left\{
 \begin{array}{l}
   (i\gw_k-G_1)z_1 = 0\\
   x_1=R_L B_L\KoL z_1+ R_L B_L\KtL x_1 = R_LB_LKz
= -R_L B_L\tilde{P}(i\gw_k)\inv \tilde{y}.
 \end{array}
 \right.
}
Thus $z_1\in \ker(i\gw_k-G_1)$ and $z_1=(z_1^l)_{l\in\Z}$ is such that $z_1^l=0$ for all $l\neq k$ and $K_1z_1 = K_{1k}z_1^k$. 
The above equations imply
\ieq{
K_{1k}z_1^k 
= Kz-\KtL x_1 
= (I-\KtL R_LB_L)Kz
}
and thus
$z_1^k 
= K_{1k}\inv (I-\KtL R_LB_L)Kz
= -K_{1k}\inv (I-\KtL R_LB_L) \tilde{P}(i\gw_k)\inv \tilde{y}
$.
Now
  \eq{
  R(i\gw_k,\Aetmo )\tilde{B}_e\phi_k
  = \pmat{R(i\gw_k,\tilde{A}) ( \tilde{B}_d\tilde{E}\phi_k -\tilde{B}\tilde{P}(i\gw_k)\inv \tilde{y} )\\z_1\\-R_L B_L\tilde{P}(i\gw_k)\inv \tilde{y}
  }
  }
  and $K_{1k} = -(G_{2k}P_K(i\gw_k))^\ast$ and
  \ieq{
  \norm{z_1}
= \norm{
  (G_{2k}P_K(i\gw_k)^\ast)\inv
(I-\KtL R_LB_L) \tilde{P}(i\gw_k)\inv \tilde{y}}
  }
  imply that
 $(\norm{R(i\gw_k,\Aetmo )\tilde{B}_e\phi_k})_{k\in\Z}\in \lp[2](\C)$ if and only if~\eqref{eq:ContrTwoEFcondpert} are satisfied.
 \hfill$\square$

\section[Nonuniform Stability of the Closed-Loop Semigroup]{Nonuniform Stability of the Closed-Loop Semigroup and Decay Rates for $e(t)$}
\label{sec:CLnonuniformstab}

In this section we derive decay rates for the orbits 
$t\mapsto T_e(t)x_{e0}$
of the closed-loop system and the regulation error $e(t)$ corresponding to initial states $x_{e0}\in \Dom(A_e)$ and $v_0\in \Dom(S)$.
These results are based on the theory of nonuniform stability of semigroups developed in~\cite{LiuRao05,BatDuy08,BorTom10,BatChi16}.
It should also be noted that the following theorem is not limited to the controller structures used in this paper, but instead it applies to any robust controller for which the closed-loop system satisfies $\norm{R(i\gw,A_e)}= O(g(\abs{\gw}))$.

\begin{thm}
  \label{thm:CLnonunifstab}
Assume the controller $\PARcontr$ solves the robust output regulation problem
and assume
 $g:\R_+\to[1,\infty)$ is a monotonically increasing function such that
$\norm{R(i\gw,A_e)}\leq O(g(\abs{\gw}))$.
In particular, for the controllers in Sections~\textup{\ref{sec:ContrOne}} and~\textup{\ref{sec:ContrTwo}} we can choose $g(\cdot)$ for which
   there exists $M_g,\gw_g>0$ such that 
   $\abs{\ddb{\gw} \frac{1}{g(\gw)}}\leq M_g$
   for all $\gw>\gw_g$ and 
\begin{itemize}
    \setlength{\itemsep}{1ex}
  \item[\textup{(a)}] $\norm{(P_L(i\gw_k)K_{1k})\inv}^2\leq g(\abs{\gw_k})$ for all $k\in\Z$ 
for the controller in Section~\textup{\ref{sec:ContrOne}}.
  \item[\textup{(b)}] $\norm{P_K(i\gw_k)\inv G_{2k}\inv}^2\leq g(\abs{\gw_k})$ for all $k\in\Z$ 
for the controller in Section~\textup{\ref{sec:ContrTwo}}.
\item[\textup{(c)}] 
 $\norm{(P_L(i\gw_k)K_{1k})\pinv}^2\leq g(\abs{\gw_k})$ for all $k\in\Z$ for the controller in Section~\textup{\ref{sec:ContrROIM}}. 
\item[\textup{(d)}] 
  $\norm{u_k}^2\abs{\gg_k}^{-2}\norm{P_L(i\gw_k)u_k}^{-2}\leq g(\abs{\gw_k})$ for all $k\in\Z$ for the controller in Section~\textup{\ref{sec:ContrOneORP}}.  
\end{itemize}
Then there exist $M_0,M_e>0$ and $0<c<1$ such that 
\eqn{
\label{eq:CLnonuniformstate}
\norm{T_e(t)x_{e0}}\leq \frac{M_e}{\Mlog\inv(ct)} \norm{A_ex_{e0}}, \qquad \forall x_{e0}\in 
\Dom(A_e) 
}
where $\Mlog\inv$ is the inverse of the function
\eq{
\Mlog(\gw) = M_0g(\gw)(\log(1+M_0g(\gw))+\log(1+\gw)), \qquad \gw>0.
}
Furthermore, there exists $M_e^e\geq 1$ such that 
for all $v_0\in \Dom(S)$ and for all initial states of the form $x_{e0} = \xeoo - \Aemo \inv B_ev_0$ where $\xeoo\in \Dom(A_e)$ the regulation error satisfies
\eqn{
\label{eq:CLnonuniformerror}
\int_t^{t+1} \norm{e(s)}ds\leq \frac{M_e^e}{\Mlog\inv(ct)} 
\norm{A_e\xeoo - \Sigma Sv_0}
}
where $\Sigma \in \Lin(W,X_e)$ is the solution of the regulator equations~\eqref{eq:regeqns}.
\end{thm}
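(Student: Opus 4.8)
The plan is to derive the state estimate~\eqref{eq:CLnonuniformstate} as a direct consequence of the Batty--Duyckaerts theorem~\cite{BatDuy08} applied to the closed-loop semigroup $T_e(t)$, and then to deduce the regulation-error estimate~\eqref{eq:CLnonuniformerror} from it using the error representation established in the proof of Theorem~\ref{thm:ORP}. Because the controller solves the robust output regulation problem, $T_e(t)$ is strongly stable and therefore uniformly bounded, while the constructions in Sections~\ref{sec:ContrOne}--\ref{sec:ContrTwo} give $i\R\subset\rho(A_e)$; combined with the hypothesis $\norm{R(i\gw,A_e)}=O(g(\abs{\gw}))$ and the monotonicity of $g$, the Batty--Duyckaerts theorem yields $\norm{T_e(t)A_e\inv}\leq M_e/\Mlog\inv(ct)$ with exactly the logarithmically corrected majorant $\Mlog$ in the statement. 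Applying this to $y=A_ex_{e0}$ for $x_{e0}\in\Dom(A_e)$ produces~\eqref{eq:CLnonuniformstate}.

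To justify the specific admissible choices of $g$ in parts~(a)--(d), I would invoke the block-triangular similarity transforms $\hat{A}_e=Q_eA_eQ_e\inv$ built in the proofs of Theorems~\ref{thm:ContrOneMain} and~\ref{thm:ContrTwoMain}. In each case the two outer diagonal blocks generate exponentially stable semigroups, so their resolvents stay uniformly bounded on $i\R$, and the off-diagonal couplings are bounded operators; hence the resolvent of the triangular generator satisfies $\norm{R(i\gw,A_e)}\lesssim 1+\norm{R(i\gw,G_0)}$, where $G_0$ denotes the stabilized internal model $G_1-G_2G_2^\ast$ in case~(a) and $G_1-B_1B_1^\ast$ in case~(b), with the analogues for~(c)--(d). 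Lemma~\ref{lem:G1fbstab} identifies the resolvent of $G_0$ at the frequencies $i\gw_k$ with $\norm{(P_L(i\gw_k)K_{1k})\inv}$ in case~(a), and Lemma~\ref{thm:IMstabnonuniform} gives $O(1+\norm{P_K(i\gw_k)\inv G_{2k}\inv}^2)$ in case~(b). The main obstacle is to upgrade these \emph{pointwise} bounds at $\set{i\gw_k}$ to a bound $O(g(\abs{\gw}))$ holding for \emph{all} $\gw\in\R$: I would use the uniform gap $\inf_{k\neq l}\abs{\gw_k-\gw_l}>0$ and the diagonal structure of $G_1$ to control $\norm{R(i\gw,G_0)}$ on the intervals between consecutive eigenvalues, and then invoke the monotonicity of $g$ together with the smoothness condition imposed on $1/g$ in the theorem to absorb the values $g(\abs{\gw_k})$ into a single monotone majorant valid on all of $i\R$.

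For the error estimate I would begin from the identity $e(t)=\CeL T_e(t)(x_{e0}-\Sigma v_0)$, which holds for almost every $t\geq 0$ by~\eqref{eq:ORPerrformula} together with the regulation constraint $\CeL\Sigma+D_e=0$ supplied by Theorem~\ref{thm:ORP}. The decisive step is to check that the relevant initial state lies in $\Dom(A_e)$. Writing $x_{e0}-\Sigma v_0=\xeoo-(A_e\inv B_ev_0+\Sigma v_0)$ and applying the extension $A_e^e$, the Sylvester equation $\Sigma S=\Aemo\Sigma+B_e$ gives $A_e^e(A_e\inv B_ev_0+\Sigma v_0)=B_ev_0+(\Sigma Sv_0-B_ev_0)=\Sigma Sv_0$, which lies in $X_e$ since $v_0\in\Dom(S)$ forces $Sv_0\in W$ and $\Sigma\in\Lin(W,X_e)$; hence $A_e\inv B_ev_0+\Sigma v_0\in\Dom(A_e)$ and consequently $x_{e0}-\Sigma v_0\in\Dom(A_e)$ with $A_e(x_{e0}-\Sigma v_0)=A_e\xeoo-\Sigma Sv_0$. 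Finally I would apply~\eqref{eq:CLnonuniformstate} to $x_{e0}-\Sigma v_0$ and then the admissibility estimate $\int_0^1\norm{\CeL T_e(s)x}\,ds\leq\kappa\norm{x}$ for $C_e$ to obtain $\int_t^{t+1}\norm{e(s)}\,ds\leq\kappa\norm{T_e(t)(x_{e0}-\Sigma v_0)}\leq(\kappa M_e/\Mlog\inv(ct))\norm{A_e\xeoo-\Sigma Sv_0}$, which is~\eqref{eq:CLnonuniformerror} with $M_e^e=\max\set{1,\kappa M_e}$.
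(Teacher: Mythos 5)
Your proposal follows the paper's proof essentially step for step: the resolvent bound $\norm{R(i\gw,A_e)}=O(g(\abs{\gw}))$ obtained from the block-triangular form $\hat{A}_e=Q_eA_eQ_e\inv$, the state decay~\eqref{eq:CLnonuniformstate} via the Batty--Duyckaerts theorem, and the error estimate via the Sylvester-equation computation $A_e(x_{e0}-\Sigma v_0)=A_e\xeoo-\Sigma Sv_0$ combined with the admissibility of $C_e$ and the error formula from Theorem~\ref{thm:ORP}. The only difference is that the ``main obstacle'' you single out---upgrading the bounds at the points $i\gw_k$ to a bound $O(g(\abs{\gw}))$ valid on the whole imaginary axis---is exactly what Theorem~\ref{thm:IMstabnonuniform} already provides (its proof uses the uniform gap and the derivative condition on $1/g$ precisely as you sketch), so in the paper's framework you may simply cite that result rather than re-derive it.
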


The function $\Mlog\inv(t)$ has particularly simple forms in the most important situations where $g(\cdot)$ is either a polynomial or an exponential function, i.e., when the norms
   $\norm{(P_L(i\gw_k)K_{1k})\inv}$ and 
   $\norm{P_K(i\gw_k)\inv G_{2k}\inv}$ grow either polynomially or exponentially fast when $\abs{\gw_k}$ is large.
    In particular, if 
    $g(\gw)=M_0\gw^\ga$
    for some $\ga,M_0>0$, then 
    $\Mlog\inv(ct)\sim (t/\log t)^{1/\ga}$~\cite[Ex. 1.7]{BatChi16},
    and thus~\eqref{eq:CLnonuniformstate} and~\eqref{eq:CLnonuniformerror} simplify to
    \eq{
\norm{T_e(t)x_{e0}} &\leq M_e
\left( \frac{\log t}{t} \right)^{\frac{1}{\ga}} \hspace{-1ex}
\norm{A_ex_{e0}},  \quad
\int_t^{t+1} \hspace{-1ex}\norm{e(s)}ds
\leq 
M_e^e\left( \frac{\log t}{t} \right)^{\frac{1}{\ga}} \hspace{-1ex}
\norm{A_e\xeoo - \Sigma Sv_0}.
    }
    In addition, if $X$ is a Hilbert space, then it follows from~\cite{BorTom10} that the logarithm in the above inequalities can be omitted and we have
    \eq{
\norm{T_e(t)x_{e0}} &\leq 
 \frac{M_e}{t^{1/\ga}}   \norm{A_ex_{e0}}, 
\qquad
\int_t^{t+1} \norm{e(s)}ds
\leq 
\frac{M_e^e }{t^{1/\ga}}
\norm{A_e\xeoo - \Sigma Sv_0}.
    }
Moreover, if
    $g(\gw)=M_0e^{\ga \gw}$ 
    for some $\ga,M_0>0$, then $\Mlog\inv(ct)\sim \frac{1}{\ga}\log t$~\cite[Ex. 1.6]{BatDuy08}
    and~\eqref{eq:CLnonuniformstate} and~\eqref{eq:CLnonuniformerror} simplify to
    \eq{
\norm{T_e(t)x_{e0}}\leq \frac{\ga M_e
}{\log t}\norm{A_ex_{e0}}, 
\qquad
\int_t^{t+1} \norm{e(s)}ds
\leq \frac{\ga M_e^e }{\log t}
\norm{A_e\xeoo - \Sigma Sv_0}.
    }

In the case where the operator $B_d$ and the operator $L$ in the controller are bounded, the convergence rate~\eqref{eq:CLnonuniformerror} is achieved for all $v_0\in \Dom(S)$ and $x_{e0}\in \Dom(A_e)$, since in this case we have $B_e\in \Lin(W,X_e)$ and $\Aemo\inv  B_ev_0\in \Dom(A_e)$. The norm on the right-hand side of~\eqref{eq:CLnonuniformerror} can then be estimated by
\eq{
\norm{A_e\xeoo - \Sigma Sv_0}
\leq \norm{A_ex_{e0}} + \norm{\Sigma}\norm{Sv_0}+\norm{B_e}\norm{v_0}.
}
In the further particular case where also the operator $B$ is bounded, we have $\Dom(A_e) = \Dom(A)\times \Dom(\mc{G}_1) = \Dom(A)\times \Dom(G_1)\times \Dom(A)$ for both of the controller structures. Then an easy estimate using the admissibility of $C$ and $K$ leads to the following corollary.
  Here $\norm{x}_{\Dom(A)} = \norm{Ax}+\norm{x}$ and  $\norm{z_0}_{\mc{G}_1}$ and $\norm{v_0}_{\Dom(S)}$ are defined analogously.

\begin{cor}
  \label{cor:CLnonuniformbddops}
  If $B$, $B_d$, and $L$ are bounded operators and the assumptions of Theorem~\textup{\ref{thm:CLnonunifstab}} are satisfied, then 
  there exists $\tilde{M}_e^e> 0$ such that for all initial states $v_0\in \Dom(S)$, $x_0\in \Dom(A)$ and $z_0\in \Dom(\mc{G}_1)$ 
  \eq{
  \int_t^{t+1} \norm{e(s)}ds\leq \frac{\tilde{M}_e^e}{\Mlog\inv(ct)} 
  \left( \norm{x_0}_{\Dom(A)}+ \norm{ z_0}_{\Dom(\mc{G}_1)}+ \norm{v_0}_{\Dom(S)} \right).
  }
  In particular, for the controllers in Sections~\textup{\ref{sec:ContrOne}},~\textup{\ref{sec:ContrROIM}},~\textup{\ref{sec:ContrOneORP}} and~\textup{\ref{sec:ContrTwo}} 
  there exists $\tilde{M}_e^e> 0$ such that for all initial states $v_0\in \Dom(S)$, $x_0\in \Dom(A)$ and $z_0 = (z_1^0,x_1^0)^T\in \Dom(G_1)\times \Dom(A)$ 
  \eq{
  \int_t^{t+1} \norm{e(s)}ds\leq \frac{\tilde{M}_e^e}{\Mlog\inv(ct)} 
  \left( \norm{x_0}_{\Dom(A)}  + \norm{z_1^0}_{\Dom(G_1)} + \norm{x_1^0}_{\Dom(A)} + \norm{v_0}_{\Dom(S)} \right).
  }
\end{cor}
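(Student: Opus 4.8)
The plan is to read the corollary off the general bound~\eqref{eq:CLnonuniformerror} of Theorem~\ref{thm:CLnonunifstab} by showing that, under the stated boundedness hypotheses, the quantity $\norm{A_e\xeoo-\Sigma Sv_0}$ on its right-hand side is dominated by the graph norms of $x_0$, $z_0$, and $v_0$. First I would note that since $B_d$ and $L$ are bounded (and $G_2$, $E$, $F$ are bounded as well), the operator $B_e=\pmat{B_dE\\\mc{G}_2F}$ lies in $\Lin(W,X_e)$. As the closed-loop system is strongly stable with $\gs(A_e)\subset\C_-$, we have $0\in\rho(A_e)$ and hence $A_e\inv B_ev_0\in\Dom(A_e)$ for every $v_0\in\Dom(S)$. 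Therefore each $x_{e0}\in\Dom(A_e)$ can be put in the form demanded by the theorem, namely $x_{e0}=\xeoo-A_e\inv B_ev_0$ with $\xeoo=x_{e0}+A_e\inv B_ev_0\in\Dom(A_e)$; then $A_e\xeoo=A_ex_{e0}+B_ev_0$ and~\eqref{eq:CLnonuniformerror} gives
\[
\int_t^{t+1}\norm{e(s)}\,ds\leq\frac{M_e^e}{\Mlog\inv(ct)}\bigl(\norm{A_ex_{e0}}+\norm{B_e}\,\norm{v_0}+\norm{\Sigma}\,\norm{Sv_0}\bigr).
\]

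The core of the argument is then the estimate $\norm{A_ex_{e0}}\lesssim\norm{x_0}_{\Dom(A)}+\norm{z_0}_{\Dom(\mc{G}_1)}$. Adding the boundedness of $B$ forces $\XB=\Dom(A)$, and since $G_2$ and $L$ are bounded the operator $\mc{G}_2$ maps boundedly into $Z$, so $\ZG=\Dom(\mc{G}_1)$; inspecting the domain~\eqref{eq:Aedom} shows that both defining conditions become automatic and $\Dom(A_e)=\Dom(A)\times\Dom(\mc{G}_1)$, as already observed before the corollary. Writing $x_{e0}=(x_0,z_0)^T$ and using $\CL x_0=Cx_0$, $\KL z_0=Kz_0$ together with the boundedness $C\in\Lin(X_1,Y)$ and $K\in\Lin(Z_1,U)$, I would estimate the two blocks of
\[
A_ex_{e0}=\pmat{Ax_0+B\KL z_0\\ \mc{G}_1 z_0+\mc{G}_2(\CL x_0+D\KL z_0)}
\]
separately: $Ax_0$ and $\mc{G}_1z_0$ are controlled by the respective graph norms, while every remaining term is a bounded operator applied to $Cx_0$ or $Kz_0$ and is hence bounded by $\norm{x_0}_{\Dom(A)}$ or $\norm{z_0}_{\Dom(\mc{G}_1)}$. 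This is precisely the boundedness of $A_e$ from $\Dom(A)\times\Dom(\mc{G}_1)$, with the product graph norm, into $X_e$, which also follows abstractly from the closed graph theorem once the domains have been identified.

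Combining the two displays and using $\norm{v_0}\leq\norm{v_0}_{\Dom(S)}$ and $\norm{Sv_0}\leq\norm{v_0}_{\Dom(S)}$, all the constants may be absorbed into a single $\tilde{M}_e^e>0$, yielding the first inequality. For the second, sharper inequality I would specialize to the controllers of Sections~\ref{sec:ContrOne},~\ref{sec:ContrROIM},~\ref{sec:ContrOneORP} and~\ref{sec:ContrTwo}: there the second component of the controller state is a copy of $X$, so $\Dom(\mc{G}_1)=\Dom(G_1)\times\Dom(A)$ and $\norm{z_0}_{\Dom(\mc{G}_1)}$ is equivalent to $\norm{z_1^0}_{\Dom(G_1)}+\norm{x_1^0}_{\Dom(A)}$ for $z_0=(z_1^0,x_1^0)^T$; substituting this splitting into the first estimate gives the claim. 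I expect the only real subtlety to be the domain identifications $\Dom(A_e)=\Dom(A)\times\Dom(\mc{G}_1)$ and $\Dom(\mc{G}_1)=\Dom(G_1)\times\Dom(A)$ --- that is, checking that the boundedness of $B$, $B_d$, and $L$ indeed collapses the scale-space domains $\XB$ and $\ZG$ back to $\Dom(A)$ and $\Dom(\mc{G}_1)$; once this is secured, the remaining steps are routine uses of the triangle inequality and the graph-norm boundedness of $C$ and $K$.
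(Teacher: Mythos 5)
Your proposal is correct and follows essentially the same route as the paper: the paper also obtains the corollary by noting that boundedness of $B_d$ and $L$ gives $B_e\in\Lin(W,X_e)$ so that~\eqref{eq:CLnonuniformerror} applies to every $x_{e0}\in\Dom(A_e)$ with the triangle-inequality bound $\norm{A_e\xeoo-\Sigma Sv_0}\leq\norm{A_ex_{e0}}+\norm{\Sigma}\norm{Sv_0}+\norm{B_e}\norm{v_0}$, then uses boundedness of $B$ to identify $\Dom(A_e)=\Dom(A)\times\Dom(G_1)\times\Dom(A)$ and finishes with exactly the ``easy estimate'' you spell out via $C\in\Lin(X_1,Y)$ and $K\in\Lin(Z_1,U)$. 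Your write-up merely makes explicit the block-wise graph-norm estimate and the collapse of the scale-space domains $\XB$ and $\ZG$ that the paper leaves implicit.
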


Finally, if the operators $C$ and $K$ are bounded, then the proof of Theorem~\ref{thm:CLnonunifstab} shows that the regulation error $e(t)$ decays at a rate
\ieq{
\norm{e(t)}
\leq \frac{M_e^e}{\Mlog\inv(ct)} 
\norm{A_e\xeoo - \Sigma Sv_0}.
}

The nonuniform stability properties described in Theorem~\ref{thm:CLnonunifstab} are based on the following result on the behaviour of the resolvent of the stabilized internal model.   Theorem~\ref{thm:IMstabnonuniform} in particular implies that the semigroup generated by $G_1-G_2G_2^\ast$ is nonuniformly stable in the sense of~\cite{BatDuy08,BorTom10,BatChi16}.

\begin{thm}
  \label{thm:IMstabnonuniform}
  Assume $U$ and $Y_k$ for $k\in\Z$ are Hilbert spaces.
  Consider
  $\ZI = \setm{(z_k)_{k\in\Z}\in\mbox{\scalebox{1.4}{$\otimes$}}_{k\in\Z}Y_k}{\sum_{k\in\Z}\norm{z_k}_{Y_k}^2<\infty}$ with inner product $\iprod{z}{v}=\sum_{k\in\Z}\iprod{z_k}{v_k}_{Y_k}$ for $z=(z_k)_k$ and $v=(v_k)_k$. 
Assume $\set{i\gw_k}_{k\in\Z}$ has a uniform gap, and
  let $G_1 = \diag(i\gw_k I_{Y_k})\kZ$ on $\ZI$ 
with domain
  $\Dom(G_1)=\setm{(z_k)\kZ\in \ZI}{(\gw_kz_k)_k\in\ZI}$ and $G_2 = (G_{2k})_{k\in\Z} \in \Lin(U,\ZI)$.

Assume further that $G_{2k}\in \Lin(U,Y_k)$ of $G_2$ are surjective and $G_{2k}^\ast$ have closed ranges for all $k\in\Z$, and assume $g:\R_+\to[1,\infty)$ is a monotonically increasing function such that $\norm{G_{2k}\pinv}^2\leq g(\abs{\gw_k})$ for all $k\in\Z$, and there exists $M_g,\gw_g>0$ such that $\abs{\ddb{\gw} \frac{1}{g(\gw)}}\leq M_g$ for all $\gw>\gw_g$. 
Then the semigroup generated by $G_1-G_2G_2^\ast$ is strongly stable, $i\R\subset \rho(G_1-G_2G_2^\ast)$ and
there exists $M>0$ such that
   \eqn{
   \label{eq:IMstabnonunifResest}
   \norm{R(i\gw,G_1-\BI\BI^\ast)}\leq  
   Mg(\abs{\gw}), \qquad \abs{\gw}>\gw_g.
   }
\end{thm}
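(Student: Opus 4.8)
The plan is to separate the qualitative statements from the quantitative resolvent bound. Because the present hypotheses coincide with those of Lemma~\ref{lem:G1fbstab} — a uniform gap in $\set{i\gw_k}_{k\in\Z}$ in particular prevents finite accumulation points, and each $G_{2k}$ is surjective with $G_{2k}^\ast$ of closed range — the strong stability of the semigroup generated by $G_1-G_2G_2^\ast$ and the inclusion $i\R\subset\rho(G_1-G_2G_2^\ast)$ are immediate from that lemma. Everything then reduces to proving $\norm{R(i\gw,G_1-G_2G_2^\ast)}\leq Mg(\abs\gw)$ for $\abs\gw>\gw_g$.

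Fix such an $\gw$, let $f\in\ZI$, and set $z=R(i\gw,G_1-G_2G_2^\ast)f$ and $u=G_2^\ast z$, so that componentwise $i(\gw-\gw_k)z_k+G_{2k}u=f_k$. By the uniform gap $\gd:=\inf_{k\neq l}\abs{\gw_k-\gw_l}>0$ there is at most one index $n$ with $\abs{\gw-\gw_n}<\gd/2$, and $\abs{\gw-\gw_k}\geq\gd/2$ otherwise; moreover the $\gd$-separation makes $\sum_{k\neq n}\abs{\gw-\gw_k}^{-2}$ bounded by a constant $C_\gd$. I would then perform a Schur reduction onto the single resonant block $Y_n$: solving the non-resonant rows $z_k=(f_k-G_{2k}u)/(i(\gw-\gw_k))$ for $k\neq n$ and inserting them into $u=G_2^\ast z$ gives $(I+T)u=G_{2n}^\ast z_n+\tilde f$, where $T=\sum_{k\neq n}G_{2k}^\ast G_{2k}/(i(\gw-\gw_k))$ is bounded (by $C_\gd^{1/2}\norm{G_2}$) and skew-adjoint, and $\norm{\tilde f}\lesssim\norm f$. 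Since $(I+T)^{-1}$ is a contraction, substituting $u$ into the resonant row $i(\gw-\gw_n)z_n+G_{2n}u=f_n$ yields $(i(\gw-\gw_n)I+M_n)z_n=f_n-G_{2n}(I+T)^{-1}\tilde f$ with the effective resonant operator $M_n:=G_{2n}(I+T)^{-1}G_{2n}^\ast\in\Lin(Y_n)$ and right-hand side of norm $\lesssim\norm f$.

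The heart of the proof is to invert $i(\gw-\gw_n)I+M_n$ with a bound governed by $g$. Here the pseudoinverse hypothesis enters through coercivity: $G_{2n}$ is surjective, so $G_{2n}^\ast$ is bounded below by $\norm{G_{2n}\pinv}^{-1}$, and since $\re\iprod{M_ny}{y}=\norm{(I+T)^{-1}G_{2n}^\ast y}^2\gtrsim\norm{G_{2n}^\ast y}^2\gtrsim\norm{G_{2n}\pinv}^{-2}\norm y^2\geq g(\abs{\gw_n})^{-1}\norm y^2$, the operator $M_n$ is accretive with real part bounded below by $c/g(\abs{\gw_n})$. Consequently $i(\gw-\gw_n)I+M_n$ is accretive with the same real-part lower bound, giving $\norm{(i(\gw-\gw_n)I+M_n)^{-1}}\leq c^{-1}g(\abs{\gw_n})$ for all $\gw$ — the ``peak'' bound, which I would use on the window $\abs{\gw-\gw_n}\lesssim 1/g(\abs{\gw_n})$ — while for $\abs{\gw-\gw_n}$ large compared with $\norm{M_n}$ a Neumann argument improves this to the ``off-peak'' bound $\norm{(i(\gw-\gw_n)I+M_n)^{-1}}\lesssim\abs{\gw-\gw_n}^{-1}$. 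Tracking the two complementary regimes and feeding $\norm{z_n}$ back through $\norm u\lesssim\norm{G_{2n}}\norm{z_n}+\norm f$ and the non-resonant bound $\norm{z^{nr}}\lesssim\norm f+\norm{G_2}\norm u$, I obtain $\norm z\lesssim g(\abs{\gw_n})\norm f$ on the window and $\norm z\lesssim\abs{\gw-\gw_n}^{-1}\norm f$ off it.

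It remains to replace $g(\abs{\gw_n})$ and $\abs{\gw-\gw_n}^{-1}$ by $g(\abs\gw)$, and this is precisely what the slow-variation hypothesis $\abs{\tfrac{d}{d\gw}g(\gw)^{-1}}\leq M_g$ is for. The window has width comparable to $1/g(\abs{\gw_n})$, so choosing its constant below $1/(2M_g)$ the mean value theorem gives $\abs{g(\abs\gw)^{-1}-g(\abs{\gw_n})^{-1}}\leq M_g\abs{\gw-\gw_n}\leq\tfrac12 g(\abs{\gw_n})^{-1}$ and hence $g(\abs{\gw_n})\leq 2g(\abs\gw)$; off the window the same Lipschitz bound on $g^{-1}$ together with $\abs{\gw-\gw_n}\,g(\abs{\gw_n})\gtrsim1$ gives $\abs{\gw-\gw_n}^{-1}\lesssim g(\abs\gw)$. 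In every case $\norm z\leq Mg(\abs\gw)\norm f$, which is~\eqref{eq:IMstabnonunifResest}. I expect the genuine difficulty to be concentrated in the resonant block: one must extract the single power of $g$ (a naive use of the energy identity $\norm u^2=\re\iprod fz$ loses a factor and produces $g^2$), control the effective operator $M_n$ uniformly in $n$ so that the peak and off-peak regimes join without a gap, and calibrate the width of the spectral window to the Lipschitz scale of $g^{-1}$ so that slow variation upgrades $g(\abs{\gw_n})$ to $g(\abs\gw)$.
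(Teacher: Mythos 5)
Your route is genuinely different from the paper's. The paper proves \eqref{eq:IMstabnonunifResest} by contradiction in the style of Liu--Rao: it takes $\abs{s_n}\to\infty$ and $\norm{z_n}=1$ with $g(\abs{s_n})\norm{(is_n-G_1+G_2G_2^\ast)z_n}\to 0$, uses skew-adjointness to get $g(\abs{s_n})\norm{G_2^\ast z_n}^2\to 0$, shows that $z_n$ concentrates on the block nearest to $s_n$, and then contradicts this smallness via the coercivity $\norm{G_2^\ast y_n}\geq \norm{(G_{2m_n}^\ast)\pinv}\inv\norm{z_n^{m_n}}$ together with the slow variation of $g$. You instead do a direct Schur reduction onto the resonant block and estimate the effective operator $M_n=G_{2n}(I+T)^{-1}G_{2n}^\ast$; the three essential ingredients (skew-adjointness of the coupling, coercivity from the pseudoinverse bound, the Lipschitz hypothesis on $1/g$ used on a window of width $\asymp 1/g(\abs{\gw_n})$) are the same, and your window calibration is the right one. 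However, there is a genuine gap, and it sits exactly where you said you expected difficulty: your two regimes do not meet. The Neumann argument gives the off-peak bound $\norm{(i(\gw-\gw_n)I+M_n)^{-1}}\lesssim \abs{\gw-\gw_n}^{-1}$ only for $\abs{\gw-\gw_n}\gtrsim \norm{M_n}$, and $\norm{M_n}$ is of order one (bounded by $\norm{G_2}^2$, but not small), while your window has width $\asymp 1/g(\abs{\gw_n})$, in general far smaller. On the intermediate band $1/g(\abs{\gw_n})\lesssim\abs{\gw-\gw_n}\lesssim\norm{M_n}$ the only bound you have actually established is the accretivity bound $c\inv g(\abs{\gw_n})$, and this cannot be upgraded to $g(\abs{\gw})$: the hypothesis $\abs{\frac{d}{d\gw}g(\gw)\inv}\leq M_g$ only prevents $g$ from changing by a bounded factor over intervals of length $\asymp 1/g$, whereas over an interval of length $\asymp 1$ the ratio $g(\abs{\gw_n})/g(\abs{\gw})$ can be arbitrarily large (let $1/g$ decrease with slope close to $M_g$ across it). So the assertion ``$\norm{z}\lesssim \abs{\gw-\gw_n}^{-1}\norm{f}$ off the window'' is precisely the unproved statement, not a consequence of what precedes it.

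The gap is fixable inside your own framework, and the fix is short: because $T$ is skew-adjoint, the numerical range of $M_n$ lies in the sector $\setm{\gl}{\re\gl\geq 0,\ \abs{\im\gl}\leq \norm{T}\re\gl}$. Indeed, with $w=(I+T)^{-1}G_{2n}^\ast y$ one has $\iprod{M_ny}{y}=\iprod{w}{(I+T)w}=\norm{w}^2+\iprod{w}{Tw}$, and $\iprod{w}{Tw}$ is purely imaginary of modulus at most $\norm{T}\norm{w}^2$. Hence for every real $\tau\neq 0$ the point $-i\tau$ lies at distance at least $\abs{\tau}/\sqrt{1+\norm{T}^2}$ from the numerical ranges of $M_n$ and $M_n^\ast$, so $\norm{(i\tau I+M_n)^{-1}}\leq \sqrt{1+\norm{T}^2}/\abs{\tau}$ for \emph{all} $\tau\neq 0$, with $\norm{T}\leq C_\gd^{1/2}\norm{G_2}^2$ uniform in $n$ and $\gw$ (by Cauchy--Schwarz against $\sum_{k\neq n}\abs{\gw-\gw_k}^{-2}\leq C_\gd$ and $\sum_k\norm{G_{2k}u}^2=\norm{G_2u}^2$). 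This single estimate replaces the Neumann step, covers the whole complement of the window, and your MVT conversion $\abs{\gw-\gw_n}^{-1}\lesssim g(\abs{\gw})$ off the window then finishes the proof. It is worth noting that this sector bound is the real content of the theorem at intermediate scales: the paper's own contradiction argument passes over the same point very quickly, since its concentration estimates only locate $s_n$ within $O(g(\abs{s_n})^{-1/2})$ of $\gw_{m_n}$ while its ratio step $g(\abs{s_n})/g(\abs{\gw_{m_n}})\geq 1/2$ is justified merely by $\abs{s_n-\gw_{m_n}}\to 0$, which is the same scale mismatch you face; so your direct argument, once patched as above, is arguably the more transparent account.
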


\begin{proof}
  The strong stability of the semigroup and 
  $i\R\subset \rho(G_1 - \BI \BI^\ast)$ follow from Lemma~\ref{lem:G1fbstab}.
The approach in the remaining part of the proof is inspired by the technique used in~\cite[Ex. 1--3]{LiuRao05}.
If  the estimate~\eqref{eq:IMstabnonunifResest} does not hold for any $M> 0$, then there exists $(s_n)_{n\in\N}\subset \R$ satisfying $\abs{s_n}\geq 1$, and $\abs{s_n}\to\infty$ as $n\to \infty$, and $(z_n)_{n\in\N}\subset \Dom(G_1)$ with $\norm{z_n}=1$ for all $n\in\N$ such that
  \ieq{
  g(\abs{s_n}) \norm{(is_n-G_1+G_2G_2^\ast)z_n}\to 0
  }
  as $n\to \infty$.
  For all $n\in\N$ we then also have
  \eq{
  0\leftarrow \re \iprod{g(\abs{s_n})(is_n-G_1+G_2G_2^\ast)z_n}{z_n} 
  = g(\abs{s_n}) \norm{G_2^\ast z_n}^2,
  }
  which further implies
  \eq{
  \MoveEqLeft \sqrt{g(\abs{s_n})} \norm{(is_n-G_1)z_n} 
  \leq g(\abs{s_n}) \norm{(is_n-G_1+G_2G_2^\ast)z_n} + \sqrt{g(\abs{s_n})} \norm{G_2^\ast z_n}
  \to 0
  }
  as $n\to \infty$.

  For each $n\in\N$ denote $m_n = \arg\min_{k}\abs{s_n-\gw_k}\in \Z$. Since $d=\inf_{k\neq l}\abs{\gw_k-\gw_l}>0$, we have that $\abs{s_n-\gw_k}\geq d/2$ for all $k\neq m_n$.  
  Denote $z_n=(z_n^k)\kZ$ where $z_n^k\in Y$.
  For any $0<\gd<1/2$ there exists $N_\gd\in\N$ such that 
  for all $n\geq N_\gd$ we have
  \eq{
  \gd^2
  &\geq g(\abs{s_n})\norm{(is_n-G_1)z_n}^2
  =g(\abs{s_n})\sum_{k\in\Z}  \abs{s_n-\gw_k}^2 \norm{z_n^k}^2\\
  &\geq g(\abs{s_n}) \abs{s_n-\gw_{m_n}}^2 \norm{z_n^{m_n}}^2 
  + g(\abs{s_n})\frac{d^2}{4}\sum_{k\neq m_n}  \norm{z_n^k}^2 .
  }
  Since $\abs{s_n}\to \infty$ as $n\to \infty$, we can assume that $\abs{\gw_{m_n}}\geq \gw_g+1$ for all $n\geq N_\gd$.
  Define $y_n = (y_n^k)_{k\in\Z}\in \ZI$ such that $y_n^{m_n}=z_n^{m_n}$ and $y_n^k=0$ for $k\neq m_n$. Then
  \begin{subequations}%
    \eqn{
    \label{eq:znminusznmnG1B1}
    g(\abs{s_n}) \norm{z_n-y_n}^2
    &= g(\abs{s_n}) \sum_{k\neq m_n}  \norm{z_n^k}^2 \leq \frac{4\gd^2}{d^2}\\
    \label{eq:znmnlowboundG1B1}
    \norm{z_n^{m_n}}^2 
    &=  \norm{z_n}^2 - \sum_{k\neq m_n}  \norm{z_n^k}^2 \geq 1-\frac{4\gd^2}{d^2}  .
    }
  \end{subequations}%
  The above estimates also imply 
  \eq{
  \gd^2 
  &\geq g(\abs{s_n}) \abs{s_n-\gw_{m_n}}^2 \norm{z_n^{m_n}}^2 
  \geq  \abs{s_n-\gw_{m_n}}^2  \left( 1-\frac{4\gd^2}{d^2} \right)
  }
  and thus $\abs{s_n-\gw_{m_n}}^2\leq \gd^2/(1-4\gd^2/d^2)\leq 2\gd^2$ if $\gd^2<d^2/8$ and $n\geq N_\gd$. 
  This means that the points $s_n$ approach the points in the set $\set{\gw_k}_{k\in\Z}$ as $n\to \infty$. Now
\eq{
 \sqrt{g(\abs{s_n})}\norm{\BI^\ast z_n}
&=\sqrt{g(\abs{s_n})}\norm{\BI^\ast y_n + \BI^\ast(z_n-y_n)} 
\geq \sqrt{g(\abs{s_n})}\norm{\BI^\ast y_n} - 
 \frac{2 \gd}{d} \norm{\BI} 
}
due to~\eqref{eq:znminusznmnG1B1}. Since 
$\abs{\ddb{\gw}\frac{1}{g(\gw)}}$ is bounded for $\gw>\gw_g$ and since $\abs{s_n-\gw_{m_n}}\to 0$ as $n\to \infty$, there exists $N_2\in \N$ such that $\frac{g(\abs{s_n})  }{g(\abs{\gw_{m_n}})}\geq 1/2$ for all $n\geq N_2$.
Using~\eqref{eq:znmnlowboundG1B1}
and $\norm{(G_{2m_n}^\ast)\pinv}^2=\norm{G_{2m_n}\pinv}^2\leq g(\abs{\gw_{m_n}})$ we get
\eq{
g(\abs{s_n}) \norm{\BI^\ast y_n}^2
\geq \frac{g(\abs{s_n}) }{\norm{(\BIn[m_n]^\ast)\pinv}^2} \norm{z_n^{m_n}}^2
\geq \frac{g(\abs{s_n})  }{g(\abs{\gw_{m_n}})} \left( 1-\frac{4\gd^2}{d^2} \right) 
\geq \frac{1}{2} \left( 1-\frac{4\gd^2}{d^2} \right)
}
for all $n\geq \max \set{N_\gd,N_2}$.
Combining the above estimates we see that for a small enough $\gd>0$ we thus have 
\eq{
\sqrt{g(\abs{s_n})}\norm{\BI^\ast z_n}
&\geq \sqrt{g(\abs{s_n})}\norm{\BI^\ast y_n} -
 \norm{\BI} \frac{2 \gd}{d}
\geq 
\frac{1}{\sqrt{2}} \left( 1-\frac{4\gd^2}{d^2} \right)^{\frac{1}{2}} -
 \norm{\BI} \frac{2 \gd}{d} >0
} 
for all $n\geq \max\set{N_\gd,N_2}$.
This contradicts the property $\sqrt{g(\abs{s_n})}\norm{\BI^\ast z_n}\to 0$ as $n\to \infty$, and therefore the proof is complete.
\end{proof}

%\begin{proof}[Proof of Theorem~\textup{\ref{thm:CLnonunifstab}}] 
\noindent\textit{Proof of Theorem~\textup{\ref{thm:CLnonunifstab}}.}
  The triangular structure of the operators $\hat{A}_e$ in the proofs of Theorems~\ref{thm:ContrOneMain},~\ref{thm:ContrOneROIM},~\ref{thm:ContrOneORP}, and~\ref{thm:ContrTwoMain} imply that if the function $g:\R_+\to[1,\infty)$ is chosen so that $\norm{(P_L(i\gw_k)K_{1k})\pinv}^2\leq g(\abs{\gw_k})$ for the controllers in Sections~\ref{sec:ContrOne}, \ref{sec:ContrROIM}, and~\ref{sec:ContrOneORP}, or so that $\norm{(G_{2k}P_K(i\gw_k))\pinv }^2\leq g(\abs{\gw_k})$ in the case of the controller in Section~\ref{sec:ContrTwo}, then
  \ieq{
  \norm{R(i\gw,A_e)} = O(g(\abs{\gw})).
  }
  Moreover, 
  $(P_L(i\gw_k)K_{1k})\pinv  = (P_L(i\gw_k)K_{1k})\inv $ in part (a), $(G_{2k}P_K(i\gw_k))\pinv =P_K(i\gw_k)\inv G_{2k}\inv$ in part (b), and $\norm{(P_L(i\gw_k)K_{1k})\pinv} = \frac{\norm{u_k}}{\abs{\gg_k}} \norm{P_L(i\gw_k)u_k}\inv$ in part (c).
  The 
  decay rate~\eqref{eq:CLnonuniformstate} follows from~\cite[Thm. 1.5]{BatDuy08}.

It remains to consider the behaviour of the regulation error. 
Assume  $v_0\in \Dom(S)$ and $x_{e0}= \xeoo-\Aemo\inv B_e v_0 $ where $\xeoo\in \Dom(A_e)$. Then $\Sigma Sv_0 = \Aemo\Sigma v_0 + B_ev_0$ implies
\eq{
\Aemo (x_{e0}-\Sigma v_0) 
= A_e\xeoo - B_ev_0 -\Sigma Sv_0 + B_e v_0
= A_e\xeoo  -\Sigma Sv_0 \in X_e
}
and thus $x_{e0}-\Sigma v_0\in \Dom(A_e)$.
Since $C_e$ is admissible
there exists $\kappa>0$ such that
$\int_0^1 \norm{\CeL T_e(s)x}ds\leq \kappa \norm{x}$ for all $x\in X_e$. 
Because $\Sigma$ is the solution of the regulator equations~\eqref{eq:regeqns},  the proof of Theorem~\ref{thm:ORP} and the nonuniform decay of $T_e(t)$ imply
\eq{
\hspace{6ex}
\int_t^{t+1} \norm{e(s)}ds
&=\int_0^1  \norm{T_e(s)T_e(t)(x_{e0}-\Sigma v_0)}  ds
\leq \kappa \norm{T_e(t)(x_{e0}-\Sigma v_0)}  \\
&\leq \frac{\kappa M_e}{\Mlog\inv(ct)} \norm{A_e(x_{e0}-\Sigma v_0)}
= \frac{\kappa M_e}{\Mlog\inv(ct)} \norm{A_e\xeoo-\Sigma Sv_0}.
\hspace{6ex}
\square
%\qedhere
} 
%\end{proof}
%\hfill$\square$

\section{Robust Control of a Two-Dimensional Heat Equation}
\label{sec:heatex}

In this section we construct a controller that achieves robust output tracking and disturbance rejection for a two-dimensional heat equation 
\begin{subequations}%
  \label{eq:heatex}
  \eqn{
  x_t(\xi,t) &= \Delta x(\xi,t),  \qquad x(\xi,0)=x_0(\xi) \\[1ex]
  \pd{x}{n}(\xi,t)\vert_{\Gamma_1} &= u(t), \qquad
  \pd{x}{n}(\xi,t)\vert_{\Gamma_2} = d(t), \qquad
  \pd{x}{n}(\xi,t)\vert_{\Gamma_0} = 0, \\
  \qquad y(t) &=\int_{\Gamma_3}x(\xi,t)d\xi,   
  }
\end{subequations}%
on the unit square $\xi=(\xi_1,\xi_2)\in \Omega = [0,1]\times [0,1]$. Here $u(t)$ is the Neumann boundary control input and $d(t)$ is the external disturbance signal. The control and disturbance are located on the parts $\Gamma_1$ and $\Gamma_2$ of the boundary $\partial \Omega$, where $\Gamma_1 = \setm{\xi=(\xi_1,0)}{0\leq \xi_1\leq 1}$ and $\Gamma_2 = \setm{\xi=(0,\xi_2)}{0\leq \xi_2\leq 1/2}$ and the remaining part of the boundary is denoted by $\Gamma_0 = 
\partial\Omega\setminus (\Gamma_1\cup \Gamma_2)$.
The observation is on the part $\Gamma_3= \setm{\xi=(1,\xi_2)}{0\leq \xi_2\leq 1}$ of the boundary of the square.

The controlled heat equation can be written as an abstract linear system on $X=\Lp[2](\Omega)$ by choosing $A=\Delta$ with $\Dom(A)=\setm{x\in H^2(\Omega)}{\pd{x}{n}=0 ~ \mbox{on} ~ \partial \Omega}$ and choosing operators $B,B_d\in \Lin(\C,X_{-1})$ and $C\in \Lin(X_1,\C)$ such that
$B = \delta_{\Gamma_1}(\cdot)$, 
$B_d = \delta_{\Gamma_2}(\cdot)$, and
$Cx = \int_0^1 x(0,\xi_2)d\xi_2$~\cite{ByrGil02}.
We have from~\cite[Cor. 1]{ByrGil02} that the controlled heat equation~\eqref{eq:heatex} is a regular linear system with $D=0\in\C$.

We construct a robust controller using the method presented in Section~\ref{sec:ContrOne}. Our aim is to achieve output tracking of a continuously differentiable periodic reference signal (depicted in Figure~\ref{fig:heatoutput} in black) 
generated by an exosystem on $W=\lp[2](\C)$ with $S=\diag(ik)_{k\in\Z}$. If $(\phi_k)_{k\in\Z}$ denotes the canonical basis of $W$ and if we choose $F\in \Lin(W,\C)$ such that $F\phi_0=0$ and $F\phi_k = y_r(k) \abs{k}^{3/5}$ for $k\neq 0$,
where $y_r(k)$ are the complex Fourier coefficients of $\yref(t)$, then $\yref(t)$ is generated with the initial state 
$v_0 = (v_{0k})_{k\in\Z}$ with $v_{00}=1$ and $v_{0k}=\abs{k}^{-3/5}$ for $k\neq 0$.  The Fourier coefficients $y_r(k)$ of $\yref(t)$ satisfy $\abs{y_r(k)}=O(\abs{k}^{-3})$, and thus $\abs{F\phi_k} = O(\abs{k}^{-12/5})$.

\subsection{Stabilization and Controller Parameters}

Since $0\in\gs_p(A)$ the uncontrolled heat equation is unstable. If we choose $L_1\in \Lin(\C,X)$ and $K_2\in \Lin(X,\C)$ such that
$K_2 x = -\pi^2\int_\Omega x(\xi)d\xi$, 
and $L_1 = -\pi^2 \cdot\mathbf{1}$,
where $\mathbf{1}(\xi)= 1$ for all $\xi\in \Omega$,
then  $(A,\left[ \CL\atop K_2 \right],[B,~ L_1,~ B_d])$ is a  regular linear system and the semigroups generated by $A+L_1\CL$ and $(A+BK_2)\vert_X$ are exponentially stable.
In this example the transfer function of the plant has an explicit formula $P(\gl) = \frac{1}{\gl}$ for all $\gl\in \overline{\C_+}\setminus \set{0}$, and
$P_L(\gl)= (I-CR(\gl,A)L_1)\inv P(\gl)=(\gl+\pi^2)\inv$.

Since $\dim Y=1$, we choose the internal model to contain one copy of the exosystem so that $\ZI = W=\lp[2](\Z;\C)$ and $G_1=S$, and as suggested {Step~${4^\circ}$} and in Corollary~\ref{cor:ContrOnePolExp}, we choose $K_1\in \Lin(\ZI,\C) $ with components
\eq{
K_{1k} 
&= \frac{\gg_0}{1+\abs{k}^{1/2+\kappa}} \frac{P_L(i\gw_k)\inv}{\abs{P_L(i\gw_k)\inv}}  
= \frac{\gg_0}{1+\abs{k}^{1/2+\kappa}} \frac{ik+\pi^2}{\sqrt{k^2+\pi^4}},\\
G_{2k} 
& = -(P_L(i\gw_k)K_{1k})^\ast
= -\frac{\gg_0}{(1+\abs{k}^{1/2+\kappa}) \sqrt{k^2+\pi^4}} 
}
where $\kappa>0$ is fixed and small, and $\gg_0>0$. Then $\norm{(P_L(i\gw_k)K_{1k})\inv} = O(\abs{k}^{3/2+\kappa})$. Moreover, we define $L\in \Lin(\C,X_{-1})$ in such a way that 
\eq{
L &= L_1 + HG_2
= L_1+\sum_{k\in\Z} R(i\gw_k,\Amo +L_1\CL)BK_{1k}G_{2k}\\
&= -\pi^2 \cdot \mathbf{1}-\gg_0^2 \sum_{k\in\Z} \frac{R(i\gw_k,\Amo +L_1\CL)B}{(1+\abs{k}^{1/2+\kappa})^2(\pi^2-ik) }
}

\subsection{Solvability of the Robust Output Regulation Problem}

Since $\vspace{-1.6ex}\displaystyle\sup_{\gw\in\R}\norm{R(i\gw,A)}<\infty$, we can apply Corollary~\ref{cor:ContrOneAltEF}.
For all $k\neq 0$ 
\eq{
\abs{(P(i\gw_k)K_{1k})\inv} = \frac{\abs{k}(1+\abs{k}^{1/2+\kappa})}{\gg_0}
}
For our $\yref(t)$, $\norm{F\phi_k} = O(\abs{k}^{-12/5})$ and 
\ieq{
\abs{(P(i\gw_k)K_{1k})\inv} \norm{F\phi_k} = O(\abs{k}^{-9/10+\kappa}),
}
and thus $\left( \abs{(P(i\gw_k)K_{1k})\inv} \norm{F\phi_k} \right)_{k\neq 0}\in\lp[2](\C)$ and the output tracking of $\yref(t)$ is achieved whenever $0<\kappa<2/5$.

Since $(A,B_d,C)$ is regular, $\abs{P_d(i\gw_k)}\lesssim \abs{\gw_k}\inv= \abs{k}\inv$ for all $k\neq 0$, and
\eq{
\abs{(P(i\gw_k)K_{1k})\inv} \abs{P_d(i\gw_k)} \abs{\tilde{E}\phi_k}  
\lesssim \abs{k}^{1/2+\kappa} \abs{\tilde{E}\phi_k} .
}
By Corollary~\ref{cor:ContrOneAltEF} 
the controller
rejects all disturbance signals that can be expressed with $\tilde{E}\phi_k $ satisfying $ \abs{\tilde{E}\phi_k} \lesssim \abs{k}^{-\gb} $ 
for any exponent $\gb>1+\kappa$ and corresponding to the initial state $v_0 = \left( 
v_{0k}
\right)_{k\neq 0}$ with $v_{00}=1$ and
$v_{0k}=\abs{k}^{-3/5} $ for $k\neq 0$.   
This includes any $d(t)$ whose 
Fourier coefficients satisfy $\hat{w}(k) = \abs{k}^{-\tilde{\gb}}$ for any $\tilde{\gb}>8/5+\kappa$.

\subsection{Numerical Approximation and Simulation}

The controlled heat equation was simulated using a finite difference approximation with a $16\times 16$ grid on the square $\Omega = [0,1]\times [0,1]$. In the simulation the infinite-dimensional exosystem approximated using a $21$--dimensional truncation of the operator $S$.
The controller parameters were set to $\kappa = 1/8$ and $\gamma_0=12$.

For simulation the resolvent operators $R(i\gw_k,A+L_1\CL)$ appearing in $L$ were approximated numerically using a more accurate finite difference approximation with a $41\times 41$ grid, 
and the infinite sum was approximated with a truncation corresponding to the truncation of the exosystem.

The behaviour of the controlled system was simulated for  an external disturbance signal $d(t)= \cos(4t)+\frac{1}{2}\sin(t)$. Initial states of the plant and the controller are chosen to be zero.
Figures~\ref{fig:heatoutput} and~\ref{fig:heaterrints} 
depict the output $y(t)$ of the controlled plant on the interval $[4\pi,12\pi]$ and the behaviour of the integrals $
\int_t^{t+1}\norm{e(s)}ds$, respectively. 
Finally, Figure~\ref{fig:heatsurf} depicts the behaviour of the state of the controlled system on $\Gamma_3$.

  \begin{figure}[ht]
\begin{minipage}{0.48\linewidth}
    \begin{center}
      \includegraphics[width=1.06\linewidth]{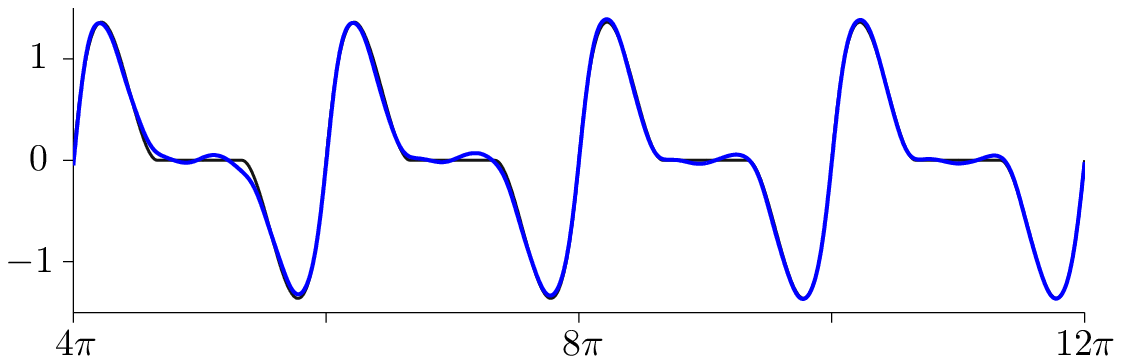}
      \end{center}
      \caption{Output $y(t)$ of the controlled plant.}
      \label{fig:heatoutput}
%    \end{figure}
  \end{minipage}
  \hfill
  \begin{minipage}{0.48\linewidth}
%    \begin{figure}[ht]
      \begin{center}
	\includegraphics[width=\linewidth]{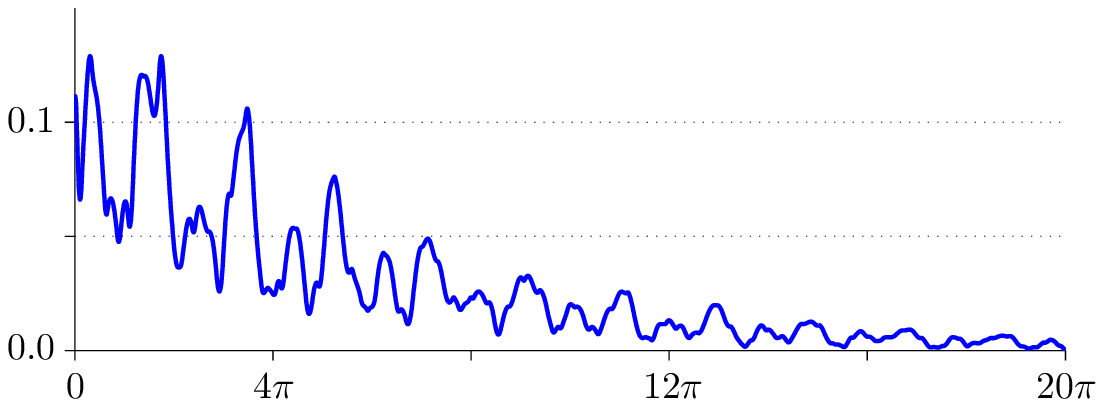}
	\end{center}
	\caption{
	Behaviour of the error integrals.
%	$\displaystyle\int_t^{t+1}\norm{e(s)}ds$.
	}
	\label{fig:heaterrints}
    \end{minipage}
      \end{figure}

\begin{figure}[ht]
  \begin{center}
    \includegraphics[scale=0.6]{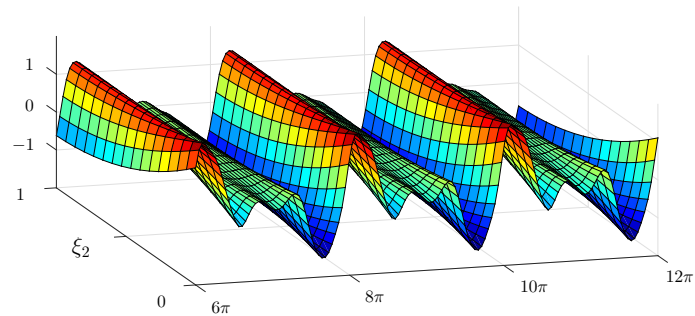}
\label{fig:heatsurf}
\caption{State of the controlled heat equation on $\Gamma_3$.}
\end{center}
\end{figure}

\end{document}